\newtheorem{theorem}{Theorem}[section]
\newtheorem{proposition}[theorem]{Proposition}
\newtheorem{lemma}[theorem]{Lemma}
\newtheorem{corollary}[theorem]{Corollary}
\newtheorem{define}[theorem]{Definition}
\def\Empty{}
\def\section{\@startsection {section}{1}{\z@}{-3.5ex plus -1ex minus 
-.2ex}{2.3ex plus .2ex}{\large\bf}}
\def\fnum@figure{{\small Figure \thefigure}}
\def\fakefigure{\def\@captype{figure}}
\long\def\@makecaption#1#2{
    \vskip 10pt 
    \def\FCap{#2} \def\NoCap{\ignorespaces}
    \ifx \FCap\NoCap
       \setbox\@tempboxa\hbox{#1}  
      \else
       \setbox\@tempboxa\hbox{#1: \small \it #2}
    \fi
    \ifdim \wd\@tempboxa >\hsize   
        \unhbox\@tempboxa\par      
      \else                        
        \hbox to\hsize{\hfil\box\@tempboxa\hfil}  
    \fi}
\def\@oddhead{\hbox{}\rightmark \hfil \rm\thepage}
\def\sectionmark#1{\markright {\sc{\ifnum \c@secnumdepth >\z@
      \S\thesection.\hskip 1em\relax \fi #1}}}
\def\oplabel#1{
  \def\OpArg{#1} \ifx \OpArg\Empty {} \else
  	\label{#1}
  \fi}
\newlength{\saveu}
\newcommand{\ca}{\mbox{${\mathcal A}$}}
\newcommand{\aaa}{\mbox{${\mathcal A}$}}
\newcommand{\hhu}{\mbox{${\mathcal H} ^u$}}
\newcommand{\hhs}{\mbox{${\mathcal H} ^s$}}
\newcommand{\hp}{\mbox{${\mathcal H}$}}
\newcommand{\cc}{\mbox{$\mathcal C$}}
\newcommand{\oo}{\mbox{$\mathcal O$}}
\newcommand{\oos}{\mbox{${\mathcal O}^s$}}
\newcommand{\oou}{\mbox{${\mathcal O}^u$}}
\newcommand{\rrrr}{\mbox{${\bf R}$}}
\newcommand{\mi}{\mbox{$\widetilde M$}}
\newcommand{\ls}{\mbox{$\Lambda^s$}}
\newcommand{\lu}{\mbox{$\Lambda^u$}}
\newcommand{\wls}{\mbox{$\widetilde \Lambda^s$}}
\newcommand{\wlu}{\mbox{$\widetilde \Lambda^u$}}
\newcommand{\wwp}{\mbox{$\widetilde \Phi$}}
\newcommand{\fol}{\mbox{$\mathcal F$}}
\newcommand{\fn}{\mbox{$\widetilde {\mathcal F}$}}
\newcommand{\ws}{\mbox{$\widetilde  W^s$}}
\newcommand{\wu}{\mbox{$\widetilde  W^u$}}
\def\centeredepsfbox#1{\centerline{\epsfbox{#1}}}
\begin{document}

\title{Pseudo-Anosov flows in toroidal $3$-manifolds}

\author{Thierry Barbot}

\author{S\'{e}rgio R. Fenley}
\thanks{Reseach partially supported by NSF grant DMS-0305313.}

\address{Thierry Barbot\\ Universit\'{e} d'Avignon et des pays de Vaucluse\\
LANLG, Facult\'{e} des Sciences\\
33 rue Louis Pasteur\\
84000 Avignon, France.}

\email{thierry.barbot@univ-avignon.fr}

\address{S\'ergio Fenley\\Florida State University\\
Tallahassee\\FL 32306-4510, USA \ \ and \ \ 
Princeton University\\Princeton\\NJ 08544-1000, USA}

\email{fenley@math.princeton.edu}

 \email{}
\maketitle

{\small 
\noindent
{{\underline {Abstract}} $-$
We first prove rigidity results for pseudo-Anosov flows in
prototypes of toroidal
$3$-manifolds:
we show that a pseudo-Anosov flow in a Seifert 
fibered manifold is up to finite covers topologically
equivalent to a geodesic flow and we show
that a pseudo-Anosov flow in a solv manifold
is topologically equivalent to a suspension Anosov flow.
Then we study the interaction of a general pseudo-Anosov flow with
possible Seifert fibered pieces in the torus decomposition:
if the fiber is associated with a periodic orbit of the
flow, we show that there is a standard and very simple 
form for the flow 
in the piece using
Birkhoff annuli. This form is strongly connected with
the topology of the Seifert piece.
We also construct a 
large new class of examples in many graph manifolds, which is
extremely general and flexible.
We construct other new classes of examples,
some of which are generalized pseudo-Anosov flows
which have one prong singularities and which show that the above 
results in
Seifert fibered and solvable manifolds do not apply to
one prong pseudo-Anosov flows.
Finally we also analyse immersed and embedded
incompressible tori in optimal position with respect 
to a pseudo-Anosov flow.
\footnote{AMS mathematics classification: Primary: 37D20, 37D50; Secondary: 57M60,
57R30}
}
}

\section{Introduction} 

The goal of this article is to start a systematic study of pseudo-Anosov flows
in toroidal $3$-manifolds.
More specifically we analyse such flows in closed manifolds which are
not hyperbolic or 
in pieces of the torus decomposition which are not hyperbolic and we 
obtain substantial results in Seifert fibered pieces.
We also produce many new examples of pseudo-Anosov flows,
including a large new class in graph manifolds
and 
we study optimal position of tori with respect to arbitrary pseudo-Anosov flows.

The study of hyperbolic flows in toroidal manifolds was initiated by
Ghys \cite{Gh}, who 
analysed Anosov flows in $3$-dimensional circle bundles. 
Ghys showed
that up to finite covers, the flow is topologically 
equivalent to the geodesic flow in the unit tangent
bundle of a hyperbolic surface. 
This was later strengthened by the first author who showed
that this holds if the manifold is Seifert fibered \cite{Ba1}.
In the mid 70's a generalization of Anosov flows 
called pseudo-Anosov flows
was introduced by Thurston \cite{Th2}. He showed that these are extremely
important for the study of surfaces and $3$-manifolds \cite{Th1,Th2,Th3}.
The difference from Anosov flows is that one allows finitely many
singularities which are each of $p$-prong type. In the applications
to the topology of $3$-manifolds there is a requirement that $p$ is at
least $3$, which is the convention here as well.
Pseudo-Anosov flows have been 
used very successfully to analyse the topology and geometry
of $3$-manifolds \cite{Mo1,Mo2,Mo3,Ga-Ka,Fe3,Fe7,Fe8} and they
are much more common than Anosov flows
\cite{Fr,RSS,Fe6}. They are much more flexible because for instance
they survive
most Dehn surgeries on closed orbits \cite{Fr}, see also 
section \ref{sec:examples} for new examples. In addition
as opposed to Anosov flows, pseudo-Anosov flows
can be constructed transverse to Reebless foliations
in vast generality if the manifold is atoroidal
\cite{Mo3,Fe4,Cal1,Cal2,Cal3}, yielding deep
geometric information.

In this article we analyse several aspects of pseudo-Anosov flows in toroidal 
manifolds. In the presence of a general pseudo-Anosov flow the manifold is
always irreducible \cite{Fe-Mo}. By the geometrization theorem \cite{Pe1,Pe2,Pe3}
a three manifold with a pseudo-Anosov flow is either hyperbolic,
Seifert fibered, a solv manifold
or the torus decomposition of the manifold is non trivial.

Notice that there is an ongoing broad study of pseudo-Anosov flows
in {\underline {closed}}, hyperbolic manifolds by the second author \cite{Fe3,Fe7,Fe8}, which 
is mostly orthogonal to this article.
In our situation classical $3$-dimensional topology will play a much 
bigger role.

A {\em {topological equivalence}} between two flows is a homeomorphism
which sends orbits to orbits.
We first analyse Seifert fibered manifolds.
Despite the much bigger flexibility of pseudo-Anosov flows we prove a 
 strong rigidity
theorem, extending the
result of \cite{Ba1} for Anosov flows (Theorem~\ref{Seifert}):

\vskip .1in
\noindent
{\bf {Theorem A}} $-$ Let $\Phi$ be a pseudo-Anosov in a Seifert fibered 
$3$-manifold. Then up to finite covers, $\Phi$ is topologically
equivalent to a geodesic flow in the unit tangent bundle of a
hyperbolic surface.
\vskip .1in

In particular the flow does not have singularities and is topologically
Anosov.
The proof of theorem A splits into two cases depending on whether
the fiber is 
homotopic to a closed orbit of the flow or not. 
In fact later on this dichotomy will be fundamental for 
the study of pseudo-Anosov flows restricted to an arbitrary Seifert fibered
piece of the torus decomposition of the manifold.
In the proof of theorem A 
we start by showing that the first case cannot happen.
In the other case we prove that there
are no singularities and also that the the stable/unstable foliations
are slitherings as introduced by Thurston \cite{Th4,Th5}.
This produces two actions of the fundamental group
on the circle, which are used to produce a 
$\pi_1$-invariant conjugacy of the
orbit space with the orbit space of the geodesic flow.
This is enough to prove theorem A.
Here orbit space refers to the orbit space of the flow lifted
to the universal cover. For a pseudo-Anosov flow, this orbit
space is always homeomorphic to the plane \cite{Fe-Mo} and 
hence the flow in the universal cover is topologically
a product.

Next we analyse pseudo-Anosov flows in three manifolds with
virtually solvable fundamental group. Here again there is a very strong
rigidity result (Theorem~\ref{solva}):

\vskip .1in
\noindent
{\bf {Theorem B}} $-$ Suppose that $\Phi$ is a pseudo-Anosov flow in 
a three manifold with virtually solvable fundamental group. Then $\Phi$ has
no singularities and is topologically equivalent to a suspension
Anosov flow.
\vskip .1in

The proof of theorem B is roughly as follows.
Suppose first that the fundamental group is solvable and consider 
a normal rank two abelian subgroup. 
The first case is that this subgroup 
acts non freely on the orbit space. 
In this case we show that the subgroup preserves a structure
in the universal cover
called a chain of lozenges (described below). By normality the whole fundamental
group of the manifold will preserve this chain of lozenges.
We also show that the stabilizer of a chain of lozenges
is at most a finite extension of ${\bf Z}^2$, which
leads to a contradiction.
It follows that the rank two abelian subgroup acts freely on the orbit 
space and by previous results this implies that the flow is
topologically equivalent to a suspension Anosov flow \cite{Fe5}.
If the manifold is virtually solvable then the flow is covered
by a suspension Anosov flow and one can show that the original
flow is also a suspension Anosov flow.

One difference between theorems A and B is that in theorem B the flow
is topologically equivalent to a suspension Anosov flow, whereas in
theorem A, we only prove it is equivalent to a geodesic flow up
to finite covers. 
The condition on finite covers cannot be removed from theorem  A
as can be seen by unwrapping
the fiber direction. See detailed explanation after the proof
of theorem A in section \ref{seifconj}.

The proofs of both theorems A and B use amongst other tools,
the study of actions on the leaf
spaces of the stable/unstable foliations in the universal
cover. These topological spaces already have a key role
in the context of Anosov flows \cite{Gh, Ba1,Fe1,Fe2}. In the more
general context of pseudo-Anosov flows, 
these leaf spaces are generalizations of both
trees and non Hausdorff simply connected one manifolds and
are called non Hausdorff trees \cite{Fe5}. A key fact used, generalizing
a previous result in the case of non 
Hausdorff simply connected one manifolds \cite{Ba5}, is that
a group element acting freely on the non Hausdorff tree
has an axis \cite{Fe5,Ro-St}.
Notice that for a pseudo-Anosov flow, the axis may not be properly embedded in the respective
leaf space.

This theme of analysising the structure of the flow in the universal cover is prevalent
in a lot of the study of pseudo-Anosov flows and is central to the results of this
article. This is used to give topological and homotopic information about the manifold, 
and it also aids questions of rigidity of the flows and large scale geometry
of the flow and the manifold.
This previous, extensive topological study of pseudo-Anosov flows substantially
simplifies the proofs of theorems A and B. 

\vskip .1in
Next we consider 
manifolds with non trivial torus decomposition.
The overarching goal is to understand the flow in each piece of the torus decomposition
and then analyse how the pieces are glued. 
In this article we do a substantial analysis of one type of
Seifert fibered pieces (the periodic type, see below) and we study 
the tori in the boundary
of the pieces of a torus decomposition.
One of our main goals is to produce 
a large new class of examples. These examples are much more
naturally understood after the structure of periodic Seifert pieces is
analysed and the structure of tori is better understood. 

In terms of the relation with pseudo-Anosov flows,
Seifert fibered pieces in the torus decomposition fall in two categories:
if the piece admits a Seifert fibration where the fiber is freely
homotopic to a closed orbit of the flow we say that the
piece is {\em periodic}, otherwise the piece is called a {\em free}
piece. Equivalently the Seifert piece is free if and only
if the action in the orbit space of a deck transformation corresponding
to a fiber in any possible Seifert fibration is free.
This dichotomy between free pieces and periodic pieces
is fundamental.
For example if the whole manifold is Seifert then 
one main step in the proof of theorem
A is to show that the piece is a free piece. 
For solvable manifolds, after cutting along a fiber,
the piece is also free. 
For Anosov flows, the case of free Seifert pieces
has been extensively analysed in \cite{Ba3}, 
giving a nearly final conclusion in the following 
case: ${\bf R}$-covered Anosov flows on
graph manifolds where all Seifert fibered pieces are free.
Recall that a {\em graph manifold} is an irreducible $3$-manifold
 where the pieces 
of the torus decomposition are all Seifert.
An Anosov flow is ${\bf R}$-covered if (say) its stable foliation is $\rrrr$-covered.
A foliation is $\rrrr$-covered if the  its lift to the universal cover
has leaf space homeomorphic
to the real numbers \cite{Fe1}.

To understand pseudo-Anosov flows in pieces of the torus decomposition one wants
to cut the manifold along tori and analyse the flow in each piece. Therefore one wants
the cutting torus to be in good position with respect to the flow.
The best situation for a general given torus
is that there is a torus isotopic to it which is transverse
to the flow. But this is not always possible. A good representative of a 
much more common situation is the following: consider the geodesic flow in the
unit tangent bundle of a closed hyperbolic surface (an Anosov flow). Let $\alpha$ be
a simple closed geodesic and let $T$ be the torus of unit vectors along $\alpha$. 
Then $T$ is embedded and incompressible but is not tranverse to the flow: it contains
two copies of $\alpha$ corresponding to the two directions along $\alpha$ and is 
otherwise transverse to the flow. This is the best position amongst all
tori isotopic to 
$T$.

Hence it is essential to understand  the interaction between
$\pi_1$-injective tori and pseudo-Anosov flows. 
Consider a ${\bf Z}^2$ subgroup of the fundamental group:
if it acts freely on the orbit space then the flow
is topologically equivalent to a suspension Anosov flow \cite{Fe5}.
Otherwise some element in ${\bf Z}^2$ does not act freely on the orbit
space and is associated to a closed orbit of the
flow. In the last case the ${\bf Z}^2$ 
describes a non trivial free homotopy from
a closed orbit to itself.
Any free homotopy between closed orbits can be put in
a canonical form as a union of immersed Birkhoff annuli
\cite{Ba2,Ba3}.
A {\em Birkhoff annulus} is an immersed annulus so that
each boundary component is a closed orbit of the flow and
the interior of the annulus is transverse to the flow.
A {\em Birkhoff torus} or {\em Birkhoff Klein bottle} is essentially 
a $\pi_1$-injective surface which is a 
union of Birkhoff annuli (see section \ref{sec:birk}).
Given an embedded incompressible torus $T$, one looks for 
an isotopic copy which is a Birkhoff torus.

A Birkhoff annulus lifts to a {\em lozenge} in the universal cover:
the boundaries lift to periodic orbits and the interior
lifts to a partial ideal
 quadrilateral region $D$ in the orbit space:
two opposite vertices of $D$ are lifts of the boundary orbits,
two vertices of $D$ are ideal and the stable/unstable foliations
in $D$ form a product structure.
The boundary orbits are the corners of the lozenge.
Lozenges are the building blocks in the universal
cover associated to free homotopies between closed orbits
and  they
are fundamental for much of the theory of Anosov flows (\cite{Ba2,Fe2})
and more generally, of pseudo-Anosov flows
\cite{Fe3,Fe5}.
Unless the flow is suspension Anosov, then any ${\bf Z}^2$ in
the fundamental group has associated to it an (essentially)  unique {\em chain} of
lozenges, where some elements of ${\bf Z}^2$ act fixing the corners and
some elements act freely. 
In the next two results one goal is to look for the best position of 
embedded incompressible tori.
In Proposition~\ref{toriem}, we prove
(see definition~\ref{def:string} for the notion of a string of lozenges):

\vskip .1in
\noindent
{\bf {Theorem C}} $-$ Let $T$ be a $\pi_1$-injective torus 
and let ${\mathcal C}$ be the $\pi_1(T)$ invariant chain of lozenges.
Suppose there is a corner $\alpha$ of ${\mathcal C}$ and a covering
translation $g$ with $g(\alpha)$ in the interior of a lozenge in ${\mathcal C}$.
Then ${\mathcal C}$ is a string of lozenges. In addition $T$ is homotopic
into a free Seifert fibered piece.
\vskip .1in

One relevance of this result is that we also prove the following:
if no corner of ${\mathcal C}$ is mapped
into the interior of a lozenge in ${\mathcal C}$ then one can homotope
$T$ to a union of Birkhoff annuli so that the periodic orbits in the 
annuli do not
intersect the union of the interiors of the Birkhoff annuli. This is half way to 
producing an embedded torus homotopic to $T$ which is 
a union of Birkhoff annuli.
The second conclusion of theorem C implies for instance that if $T$ is 
the boundary torus between 2 hyperbolic pieces in the torus decomposition,
then the situation of theorem C cannot happen.
The general result concerning best position of embedded tori is the following
(Theorem ~\ref{embedd}):

\vskip .1in
\noindent
{\bf {Theorem D}} $-$ Suppose that $M$ is orientable and
that the pseudo-Anosov flow is not topologically
equivalent to a suspension Anosov flow.
Let $T$ be an embedded, incompressible
torus in $M$. Then either 1) $T$ is isotopic to an embedded
Birkhoff torus, or
2) $T$ is homotopic to a weakly embedded Birkhoff 
torus $T'$ and $T$ (or $T'$) is contained in a periodic Seifert
fibered piece, or
3) $T$ is isotopic to the boundary of the tubular neighborhood of an embedded Birkhoff-Klein bottle
contained in a free Seifert piece.
\vskip .1in

{\em {Weakly embedded}} means that $T'$ is embedded except perhaps 
along the closed orbits contained in the Birkhoff annuli.
All the possibilities in Theorem D indeed happen: 1) is the typical situation
when the flow is a geodesic flow of an orientable
surface (or more generally, a Handel-Thurston example, see \cite{Ha-Th}),
2) occurs in the Bonatti-Langevin examples \cite{Bo-La}, and 3) occurs in the geodesic flow on
non-orientable closed surfaces (see the last remark of section~\ref{seifconj}).

\vskip .1in
One consequence of this study of standard forms for tori is the
following (Proposition~\ref{singint}):

\vskip .1in
\noindent
{\bf {Theorem E}} $-$ Let $\alpha$ be a singular orbit of a pseudo-Anosov flow.
Then $\alpha$ is homotopic into a piece of the torus decomposition of the 
manifold.
\vskip .1in

If the manifold is atoroidal or Seifert fibered the statement is trivial.
Notice that the result is clearly not true for regular periodic orbits
as there are many transitive Anosov flows in graph
manifolds which are not Seifert fibered \cite{Ha-Th}.

The results above help tremendously to understand canonical flow
neighborhoods 
associated to periodic Seifert fibered pieces (section~\ref{sec:perio}):

\vskip .1in
\noindent
{\bf {Theorem F}} $-$ 
Let $\Phi$ be a pseudo-Anosov flow in $M$ orientable and
let $P$ be a periodic Seifert fibered piece of the
torus decomposition of $M$.
 Then there is a finite union $Z$ of Birkhoff
annuli, which is embedded except perhaps at the boundaries of the
Birkhoff annuli and which is a model for the  core of $P$: a sufficiently small
neighborhood of $Z$ is a representative for the Seifert piece $P$. 
The finite union $Z$ is 
well defined up to flow isotopy.
\vskip .1in

This is a remarkably simple form for the flow in the piece $P$.
It follows that the dynamics of the flow restricted to the
piece is extremely simple: there are finitely many closed orbits $-$
the union of the boundary of the Birkhoff annuli. All other orbits
are either in the stable or unstable leaves of the closed orbits
or enter and exit the manifold with boundary.
In addition the theorem says that in periodic Seifert pieces
the flow is intimately connected with the topology of the
Seifert piece. This provides a strong relation between
dynamics and topology.
Notice for future reference that it is not true in general
that one can make the boundary of a neighborhood of
the Birkhoff annuli transverse to the flow.

The basic ideas of the proof of theorem F are as follows:
The fiber in $P$ is represented by a closed orbit of the flow and any
${\bf Z}^2$ in $\pi_1(P)$ can be represented by a Birkhoff
torus which has this closed orbit. The Seifert piece being
periodic implies that the situation of theorem C cannot happen,
and we can adjust the Birkhoff annuli so that the interiors
are embedded and disjoint. Three manifold topology and the
study of chains of lozenges implies that we can choose 
finitely many of these Birkhoff annuli which carry all of
$\pi_1(P)$. This produces $Z$ and $P$ can be represented by
a small neighborhood of $Z$.

\vskip .1in
We are now ready to describe 
the main family of examples we produce, see section \ref{sec:examples}. 
The contruction uses the understanding of the structure given by Theorem F and it shows that
the description given in Theorem F is actually realizable in a wide variety 
of cases, at least when 
one requires that the boundary of the periodic Seifert pieces are transverse
to the flow.

In fact in the construction of theorem G we 
{\underline {allow}} one prongs.
If there are one prongs, these generalized pseudo-Anosov flows
are called {\em one prong pseudo-Anosov flows}.
Classically they originated in Thurston's work \cite{Th2} since he
constructed 
pseudo-Anosov homeomorphisms of the two sphere, having for example
four one prong singularities. A suspension of these homeomorphisms
produces a one prong pseudo-Anosov flow.
In this case the universal cover is ${\bf S}^2 \times {\bf S}^1$ and hence
$M$ is not irreducible,
but still the flow in the universal cover is topologically a product
flow and the orbit space is  ${\bf S}^2$ which is a two manifold.
Other examples with one prongs are obtained doing Dehn surgery on
periodic orbits of pseudo-Anosov flows \cite{Fr}, but here very little is known
about the resulting one prong pseudo-Anosov flows.

\vskip .1in
\noindent
{\bf {Theorem G}} $-$ There is a large family of (possibly one
prong) pseudo-Anosov flows 
in graph manifolds and manifolds fibering over the circle
with fiber a torus,   where the flows are
 obtained by glueing simple
building blocks. 
The building blocks are homeomorphic to solid tori and
they are canonical flow neighborhoods
of intrinsic (embedded)
Birkhoff annuli. The building blocks 
have tangential boundary, transverse
boundary and only 2 periodic orbits. A collection of blocks is first glued along annuli
in their tangential boundary
to obtain Seifert fibered manifolds with boundary,  and which have a 
semi-flow
transverse to the boundary with finitely many periodic orbits.
Under very general and specified conditions these can be glued
along their boundaries (transverse to the flow) 
to produce (possibly one prong) pseudo-Anosov flows in the
resulting closed manifolds.
In addition one can do any Dehn surgery (except for one) in the periodic
orbits of the middle step to obtain new (possibly one prong) pseudo-Anosov
flows.
\vskip .1in

This family is a 
vast generalization of the Bonatti-Langevin construction \cite{Bo-La}.
The constructions in theorem G are very general producing 
for example one prong pseudo-Anosov flows in all but one torus bundle
over the circle. 
This shows that theorem B also does not hold if one allows one prongs.
In the construction in theorem G, 
if the middle step produces 
a flow without one prong periodic orbits (this is immediate to
check), then the resulting final flow
in the closed manifold will be pseudo-Anosov in a graph
manifold. All the Seifert fibered pieces are periodic pieces.
This construction is extremely general producing a very large class
of new examples.

An appealing way to describe the examples of theorem G in the absence of the
Dehn surgeries is the following:
the manifolds with transverse boundary in the middle step
are circle bundles, with fibers preserved by the local flow, and projecting 
to a local flow of Morse-Smale type on a surface $S$ with boundary: there
is a finite number of singular points (prong singularities) in 
$S$, stable and unstable manifolds
joining the singular points to the boundary, and all other orbits go from boundary component to another.
This picture can be encoded in the combinatorial data of a fat graph satisfying some conditions.

In a subsequent article we will show that the examples of theorem G without
one prongs (hence pseudo-Anosov) are
transitive. By construction they have many  transverse tori to the flow.
We stress that the family of examples in theorem G is entirely new
and is constructed by assembling non pseudo-Anosov blocks of semi-flows and glueing.
Recall that the majority of constructions of pseudo-Anosov flows,
besides those transverse to foliations, 
are obtained by modifying some original pseudo-Anosov flow:
\ 1) Dehn surgery on closed orbits of flows,
introduced by Goodman for Anosov flows \cite{Go}
and extended by Fried for pseudo-Anosov flows
\cite{Fr}, \ 2) The derived from Anosov construction
of blow up of orbits and glueing by Franks and Williams \cite{Fr-Wi},
\ 3) The shearing construction along tori, by Handel and Thurston
\cite{Ha-Th}.

When the flows of theorem G do not have $p$-prong singularities or one prongs,
they are new examples of Anosov flows.
In this case these Anosov flows are never contact.
This is because all contact Anosov flows
are $\rrrr$-covered \cite{Ba6}.
In addition if an Anosov flow is $\rrrr$-covered and admits a 
transverse torus $T$, then it has to be topologically equivalent to
a suspension  and $T$ must be a cross section \cite{Fe1,Ba1}. 
In our situation consider the transverse tori
which are the boundary components of the middle glueing pieces:
they do not intersect all orbits of the flow and cannot be cross sections.
This proves that the flows are not contact.

\vskip .1in
We now describe additional families of new examples.
At the end of section~\ref{seifconj}, we produce some interesting new examples
using branched cover constructions:

\vskip .1in
\noindent
{\bf {Theorem H}} $-$ 1) There is an infinite family of one prong pseudo-Anosov flows
with two one prong singular orbits and no other singular orbits where the
manifold is Seifert fibered. They are doubly branched covered by 
the Handel-Thurston examples \cite{Ha-Th}.
2) There are also infinitely many examples of one
prong pseudo-Anosov flows which are doubly branched covered by
a geodesic flow in a hyperbolic surface and where the original manifolds
are not irreducible.
\vskip .1in

As remarked above the Handel Thurston examples are in graph manifolds
which are not Seifert fibered.
Part 1) of theorem H shows that theorem A does not hold in Seifert fibered manifolds
if one allows one prong orbits. 
The manifolds in part 2) are not irreducible and neither homeomorphic
to ${\bf S}^2 \times {\bf S}^1$. At the beginning of section~\ref{sec:examples},
we improve these examples to show that a mixed behavior of 
Seifert fibered pieces is possible:

\vskip .1in
\noindent
{\bf {Theorem I}} $-$ There are examples of pseudo-Anosov flows in graph
manifolds with one periodic piece and an arbitrary  number of free pieces. 
\vskip .1in

The flows in theorem I are obtained as branched cover constructions
of the examples 2) in theorem H.

At this point there is no good understanding of the general structure of one prong
pseudo-Anosov flows and they can be much less well behaved than pseudo-Anosov flows.
In this article we do not analyse at all the structure of one prong pseudo-Anosov flows,
but only construct many examples of these, some of which highlight the
differences with pseudo-Anosov flows in Seifert fibered manifolds,
solvable manifolds and graph manifolds.

The first examples of an Anosov flow in a graph manifold where the
pieces are periodic were constructed by Bonatti and Langevin
\cite{Bo-La}: they are extremely special cases of the examples provided by
theorem G.  
The general cases requires different arguments to prove the
pseudo-Anosov behavior.
The systematic study of Anosov flows in graph
manifolds was started in \cite{Ba3,Ba4}.

In the final section of this article we discuss further
questions/comments/conjectures concerning pseudo-Anosov flows
in toroidal manifolds.
In this article we do not really analyse free Seifert pieces, but
in the final section we have some comments and questions about them.


\section{Background}

\noindent
{\bf {Pseudo-Anosov flows $-$ definitions}}

\begin{define}{(pseudo-Anosov flows)}
Let $\Phi$ be a flow on a closed 3-manifold $M$. We say
that $\Phi$ is a {\em pseudo-Anosov flow} if the following conditions are
satisfied:


- For each $x \in M$, the flow line $t \to \Phi(x,t)$ is $C^1$,
it is not a single point,
and the tangent vector bundle $D_t \Phi$ is $C^0$ in $M$.

- There are two (possibly) singular transverse
foliations $\ls, \lu$ which are two dimensional, with leaves saturated
by the flow and so that $\ls, \lu$ intersect
exactly along the flow lines of $\Phi$.

- There is a finite number (possibly zero) of periodic orbits $\{ \gamma_i \}$,
called singular orbits.
A stable/unstable leaf containing a singularity is homeomorphic 
to $P \times I/f$
where $P$ is a $p$-prong in the plane and $f$ is a homeomorphism
from $P \times \{ 1 \}$ to $P \times \{ 0 \}$.
In addition $p$ is at least $3$.

- In a stable leaf all orbits are forward asymptotic,
in an unstable leaf all orbits are backwards asymptotic.
\end{define}

Basic references for pseudo-Anosov flows are \cite{Mo1,Mo2} and
\cite{An} for Anosov flows. A fundamental remark is that the ambient manifold
supporting a pseudo-Anosov flow (without $1$-prongs) is necessarily irreducible - the
universal covering is homeomorphic to ${\bf R}^3$ (\cite{Fe-Mo}).

\begin{define}{(one prong pseudo-Anosov flows)}{}
A flow $\Phi$ is a one prong pseudo-Anosov flow in $M^3$ if it satisfies
all the conditions of the definition of pseudo-Anosov flows except
that the $p$-prong singularities  can also be
$1$-prong ($p = 1$).
\end{define}

\vskip .05in
\noindent
{\bf {Torus decomposition}} 

Let $M$ be an irreducible closed $3$--manifold. If $M$ is orientable, it  has a unique (up to isotopy) 
minimal collection of disjointly embedded incompressible tori such that each component of $M$ 
obtained by cutting along the tori is either atoroidal or Seifert-fibered \cite{Ja,Ja-Sh}
and the pieces are isotopically maximal with this property. 
If $M$ is not orientable,
a similar conclusion holds; the decomposition has to be performed along tori, but also along
some incompressible embedded Klein bottles.

Hence the notion of maximal Seifert pieces in $M$ is well-defined up to isotopy. If $M$ admits
a pseudo-Anosov flow, we say that a Seifert piece $P$  is {\em periodic} if there is a 
Seifert fibration on $P$ for which a regular
fiber is freely homotopic to a periodic orbit of $\Phi$. If not,
the piece is called {\em free.} 

\vskip .05in
\noindent{\bf {Remark. }} 
In a few circunstances, the Seifert fibration is not unique: it happens for example 
when $P$ is homeomorphic to a twisted line bundle over the Klein bottle or
$P$ is $T^2 \times I$. 
We stress out that our convention is to say that the Seifert piece is free
if 
{\underline {no}} Seifert fibration in $P$ has fibers homotopic to a periodic orbit.

\vskip .1in
\noindent
{\bf {Orbit space and leaf spaces of pseudo-Anosov flows}} 

\vskip .05in
\noindent
\underline {Notation/definition:} \ 
We denote by $\mi$ the universal covering of $M$, and by $\pi_1(M)$ the fundamental group of $M$,
considered as the group of deck transformations on $\mi$.
The singular
foliations lifted to $\mi$ are
denoted by $\wls, \wlu$.
If $x \in M$ let $W^s(x)$ denote the leaf of $\ls$ containing
$x$.  Similarly one defines $W^u(x)$
and in the
universal cover $\ws(x), \wu(x)$.
Similarly if $\alpha$ is an orbit of $\Phi$ define
$W^s(\alpha)$, etc...
Let also $\wwp$ be the lifted flow to $\mi$.

\vskip .05in

We review the results about the topology of
$\wls, \wlu$ that we will need.
We refer to \cite{Fe2,Fe3} for detailed definitions, explanations and 
proofs.
The orbit space of $\wwp$ in
$\mi$ is homeomorphic to the plane $\rrrr^2$ \cite{Fe-Mo}
and is denoted by $\oo \cong \mi/\wwp$. There is an induced action of $\pi_1(M)$ on $\oo$.
Let 

$$\Theta: \ \mi \ \rightarrow \ \oo \ \cong \ \rrrr^2$$

\noindent
be the projection map: it is naturally $\pi_1(M)$-equivariant.
If $L$ is a 
leaf of $\wls$ or $\wlu$,
then $\Theta(L) \subset \oo$ is a tree which is either homeomorphic
to $\rrrr$ if $L$ is regular,
or is a union of $p$-rays all with the same starting point
if $L$ has a singular $p$-prong orbit.
In addition $L$ is a closed subset of $\mi$ or equivalently
$L$ is properly embedded in $\mi$.
The foliations $\wls, \wlu$ induce $\pi_1(M)$-invariant singular $1$-dimensional foliations
$\oos, \oou$ in $\oo$. Its leaves are $\Theta(L)$ as
above.
If $L$ is a leaf of $\wls$ or $\wlu$, then 
a {\em sector} is a component of $\mi - L$.
Similarly for $\oos, \oou$. 
If $B$ is any subset of $\oo$, we denote by $B \times \rrrr$
the set $\Theta^{-1}(B)$.
The same notation $B \times \rrrr$ will be used for
any subset $B$ of $\mi$: it will just be the union
of all flow lines through points of $B$.
We stress that for pseudo-Anosov flows there are at least
$3$-prongs in any singular orbit ($p \geq 3$).
For example, the fact that the orbit space in $\mi$ is
a $2$-manifold is not true in general if one allows
one prongs.

\begin{define}
Let $L$ be a leaf of $\wls$ or $\wlu$. A slice of $L$ is 
$l \times \rrrr$ where $l$ is a properly embedded
copy of the reals in $\Theta(L)$. For instance if $L$
is regular then $L$ is its only slice. If a slice
is the boundary of a sector of $L$ then it is called
a line leaf of $L$.
If $a$ is a ray in $\Theta(L)$ then $A = a \times \rrrr$
is called a half leaf of $L$.
If $\zeta$ is an open segment in $\Theta(L)$ 
it defines a {\em flow band} $L_1$ of $L$
by $L_1 = \zeta \times \rrrr$.
We use the same terminology of slices and line leaves
for the foliations $\oos, \oou$ of $\oo$.
\end{define}

If $F \in \wls$ and $G \in \wlu$ 
then $F$ and $G$ intersect in at most one
orbit.

We abuse convention and call 
a leaf $L$ of $\wls$ or $\wlu$ {\em periodic}
if there is a non trivial covering translation
$g$ of $\mi$ with $g(L) = L$. This is equivalent
to $\pi(L)$ containing a periodic orbit of $\Phi$.
In the same way an orbit 
$\gamma$ of $\wwp$
is {\em periodic} if $\pi(\gamma)$ is a periodic orbit
of $\Phi$. Observe that in general, the stabilizer of an element $\alpha$
of $\oo$ is either trivial, or a cyclic subgroup of $\pi_1(M)$.

\vskip .2in
\noindent
{\bf {Product regions}}

Suppose that a leaf $F \in \wls$ intersects two leaves
$G, H \in \wlu$ and so does $L \in \wls$.
Then $F, L, G, H$ form a {\em rectangle} in $\mi$, ie. every stable leaf between $F$ and $L$
intersects every unstable leaf between $G$ and $H$. In particular, 
there is  no singularity 
in the interior of the rectangle \cite{Fe3}.

There will be two generalizations of rectangles: 1) perfect fit, which is a rectangle
with one corner orbit removed 
(definition \ref{def:pfits}) and 2) lozenge, which is a 
 rectangle with two opposite corners removed (definition \ref{def:lozenge}).
We will also denote by rectangles, perfect fits, lozenges
and product regions the projection of these regions to
$\oo \cong \rrrr^2$.

\begin{define}{}{}
Suppose $A$ is a flow band in a leaf of $\wls$.
Suppose that for each orbit $\alpha$ of $\wwp$ in $A$ there is a
half leaf $B_{\alpha}$ of $\wu(\alpha)$ defined by $\alpha$ so that: 
for any two orbits $\gamma, \beta$ in $A$ then
a stable leaf intersects $B_{\beta}$ if and only if 
it intersects $B_{\gamma}$.
%
%
This defines a stable product region which is the union
of the $B_{\gamma}$.
Similarly define unstable product regions.
\label{defsta}
\end{define}

The main property of product regions is the following:
for any $F \in \wls$, $G \in \wlu$ so that 
$(i) \ F \cap A  \
\not = \ \emptyset \ \ {\rm  and} \ \ 
 (ii) \ G \cap A \ \not = \ \emptyset,
\ \ \ {\rm then} \ \ 
F \cap G \ \not = \ \emptyset$.
There are no singular orbits of 
$\wwp$ in $A$.

%

\begin{theorem}{(\cite{Fe3})}{}
Let $\Phi$ be a pseudo-Anosov flow. Suppose that there is
a stable or unstable product region. Then $\Phi$ is 
topologically equivalent to a suspension Anosov flow.
In particular $\Phi$ is non singular.
\label{prod}
\end{theorem}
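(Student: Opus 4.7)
The plan is to show that the local product structure supplied by $A$ propagates to a global product structure on $\oo$, after which a classical characterization of suspension Anosov flows completes the argument. By symmetry between the two foliations, I will assume $A$ is a stable product region in some leaf $L \in \wls$.

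First I would project down to $\oo$: the segment $\Theta(A) \subset \oos$ together with the unstable rays $\Theta(B_\alpha)$ for $\alpha \in \Theta(A)$ span an open rectangle $R \subset \oo$, and by the product region property recalled right after Definition~\ref{defsta}, every stable leaf meeting $A$ crosses every unstable leaf meeting $A$, with no singular orbits inside. Hence on $R$ the foliations $\oos$ and $\oou$ restrict to two transverse product foliations by properly embedded lines.

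Next I would consider the family $\mathcal{R}$ of all open product rectangles in $\oo$ containing $R$ and set $R^{*}=\bigcup\mathcal{R}$. The set $R^{*}$ is itself a product rectangle since the product conditions on two overlapping rectangles in $\mathcal{R}$ glue across their intersection, and by construction $R^{*}$ contains no singular points of $\oos\cup\oou$. The crucial step is then to prove $R^{*}=\oo$. Suppose $x\in\partial R^{*}$. If $x$ is a regular point of $\oos$ and $\oou$, a small bi-foliated chart around $x$ together with the product region property allows one to enlarge $R^{*}$, contradicting maximality. If $x$ were a $p$-prong singular point, at least one stable and one unstable prong of $x$ would enter $R^{*}$; but inside $R^{*}$ every stable leaf meets every unstable leaf in exactly one orbit, so following the remaining prongs and using $p\geq 3$ forces some stable leaf in $R^{*}$ to meet some unstable leaf twice, a contradiction. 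Thus $R^{*}=\oo$ and $\Phi$ has no singular orbits.

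Finally I would invoke the classification: with every leaf of $\wls$ meeting every leaf of $\wlu$ in exactly one orbit, both leaf spaces are homeomorphic to \rrrr, $\Phi$ is Anosov and \rrrr-covered, and $\oo$ splits as a genuine topological product. By Barbot \cite{Ba1} and Fenley \cite{Fe1}, such a flow admits a cross-section torus transverse to the flow, and hence is topologically equivalent to a suspension Anosov flow. The main obstacle will be the step ruling out a singular orbit on the frontier of $R^{*}$: one has to exploit carefully that the half-leaves $B_\alpha$ behave uniformly with respect to all stable leaves crossing them, together with the local picture of $p$-prong singularities ($p\geq 3$) in the orbit space, which is a $2$-manifold away from the singular set. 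This is essentially the content of the argument in \cite{Fe3}.
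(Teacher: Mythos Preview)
The paper does not prove this theorem: it is quoted as a background result from \cite{Fe3} and no argument is given here, so there is nothing to compare your approach against in this paper.

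That said, your sketch has a genuine gap at the ``maximal rectangle'' step. You define $R^{*}$ as the union of all open product rectangles containing $R$ and then assert that $R^{*}$ is itself a product rectangle because ``the product conditions on two overlapping rectangles in $\mathcal{R}$ glue across their intersection.'' This is not true in general: two product rectangles that both contain $R$ can extend $R$ in different directions (one stretching the stable interval, the other the unstable interval, or the two extending opposite unstable half-leaves), and their union need not be a rectangle at all --- it can be an L-shaped or cross-shaped region on which the product property fails globally. So maximality of $R^{*}$ in the class of rectangles is not achieved by taking a union; one needs instead a Zorn-type argument on nested chains, or a direct propagation argument pushing the stable and unstable sides outward one at a time and controlling what happens at the frontier. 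The actual proof in \cite{Fe3} proceeds by such a propagation/perfect-fit analysis rather than by a union of rectangles.

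Your treatment of a possible singular orbit on $\partial R^{*}$ is also too vague to stand as written: the sentence ``following the remaining prongs and using $p\geq 3$ forces some stable leaf in $R^{*}$ to meet some unstable leaf twice'' is an assertion, not an argument, and in fact a stable leaf and an unstable leaf in $\mi$ meet in at most one orbit regardless, so the contradiction you are aiming for has to be phrased differently (for instance, producing a perfect fit or a non-Hausdorff pair that contradicts the product structure). The final invocation of \cite{Ba1,Fe1} once the global product structure is established is correct in spirit; that is exactly the content of Proposition~\ref{susp}.
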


%

In particular:

\begin{define}{(\cite{Fe1})}\label{def:product}
A pseudo-Anosov flow is {\em product} (or {\em splitting} in the terminology of \cite{franks})
if the entire orbit space is a product region, ie if every leaf of
its stable foliation $\wls$ intersects every leaf of its unstable foliation $\wlu$.
\end{define}

\begin{proposition}{}{}
A (topological) Anosov flow is product if and only if it is topologically equivalent to a suspension 
Anosov flow. In particular $M$ fibers over the circle with
fiber a torus and Anosov monodromy.
\label{susp}
\end{proposition}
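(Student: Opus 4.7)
The plan is to obtain one direction from Theorem~\ref{prod} and the other from a direct analysis of the model suspension.

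For the forward direction, assume $\Phi$ is product in the sense of Definition~\ref{def:product}; I will exhibit a stable product region and invoke Theorem~\ref{prod}. Since $\Phi$ is Anosov, every leaf $F \in \wls$ is regular and $\Theta(F) \cong \rrrr$. Pick any open segment $\zeta \subset \Theta(F)$ and set $A = \zeta \times \rrrr \subset F$. For each orbit $\alpha$ in $A$, choose the half leaf $B_\alpha$ of $\wu(\alpha)$ lying on a fixed side of $F$, making the choice coherently along $\zeta$ using the global product structure of $\oo$. The product hypothesis forces the equivalence in Definition~\ref{defsta}: a stable leaf intersects $B_\beta$ if and only if it lies on the chosen side of $F$, which is independent of $\beta$. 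Thus $A$ together with the family $\{B_\alpha\}$ is a stable product region, and Theorem~\ref{prod} delivers that $\Phi$ is topologically equivalent to a suspension Anosov flow.

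For the converse, assume $\Phi$ is topologically equivalent to a suspension Anosov flow. In dimension three this is the suspension of an Anosov diffeomorphism of a closed surface; since the only closed surface admitting an Anosov diffeomorphism is the torus, $M$ is the mapping torus of a hyperbolic automorphism of $T^2$, which gives the ``In particular'' clause that $M$ fibers over $\cs$ with torus fiber and Anosov monodromy. To verify productness it suffices to check it for this model, since topological equivalence preserves $\oo$ together with its foliations $\oos$ and $\oou$. Passing to the universal cover, $\mi \cong \rrrr^3$ with $\wwp$ vertical translation, so $\oo$ is naturally identified with the universal cover $\rrrr^2$ of $T^2$; under this identification $\oos$ and $\oou$ become the two transverse linear foliations of $\rrrr^2$ spanned by the eigendirections of the lifted toral automorphism. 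Any two transverse linear foliations of $\rrrr^2$ satisfy that every leaf of one meets every leaf of the other, so $\Phi$ is product.

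The proof is largely bookkeeping. Theorem~\ref{prod} does the real work in the forward direction, and the converse reduces to an easy linear computation once one recalls that the only closed surface supporting an Anosov diffeomorphism is $T^2$. The only mildly delicate point is the coherent choice of half leaves $B_\alpha$ needed to translate the global productness of Definition~\ref{def:product} into the flow-band formulation of Definition~\ref{defsta}; beyond this, I expect no serious obstacle.
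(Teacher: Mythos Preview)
Your proof is correct and is exactly the argument the paper has in mind: the proposition is stated without proof, immediately after Theorem~\ref{prod} and Definition~\ref{def:product}, as an ``In particular'' consequence. The forward direction is precisely the reduction to Theorem~\ref{prod} via a stable product region, and the converse is the classical linear picture for a suspension of a toral Anosov automorphism; your write-up simply makes explicit the coherent choice of half leaves $B_\alpha$ that the paper leaves to the reader.
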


Hence, in the sequel, 
we will use {\em product pseudo-Anosov flow} 
as an abbreviation for {\em pseudo-Anosov flow 
topologically equivalent to a suspension.}

\vskip .2in
\noindent
{\bf {Perfect fits, lozenges and scalloped chains}}

Recall that a foliation $\fol$ in $M$ is $\rrrr$-covered if the
leaf space of $\fn$ in $\mi$ is homeomorphic to the real line $\rrrr$
\cite{Fe1}.

\begin{define}{(\cite{Fe2,Fe3})}\label{def:pfits}
Perfect fits -
Two leaves $F \in \wls$ and $G \in \wlu$, form
a perfect fit if $F \cap G = \emptyset$ and there
are half leaves $F_1$ of $F$ and $G_1$ of $G$ 
and also flow bands $L_1 \subset L \in \wls$ and
$H_1 \subset H \in \wlu$,
so that 
%
the set 

$$\overline F_1 \cup \overline H_1 \cup 
\overline L_1 \cup \overline G_1$$

\noindent
separates $M$ and forms an a rectangle $R$ with a corner removed:
The joint structure of $\wls, \wlu$ in $R$ is that of
a rectangle with a corner orbit removed. The removed corner
corresponds to the perfect of $F$ and $G$ which 
do not intersect.
\end{define}

We refer to fig. \ref{loz}, a for perfect fits.
There is a product structure in the interior of $R$: there are
two stable boundary sides and two unstable boundary
sides in $R$. An unstable
leaf intersects one stable boundary side (not in the corner) if
and only if it intersects the other stable boundary side
(not in the corner).
We also say that the leaves $F, G$ are {\em asymptotic}.

%
%
%
%

\begin{figure}
\centeredepsfbox{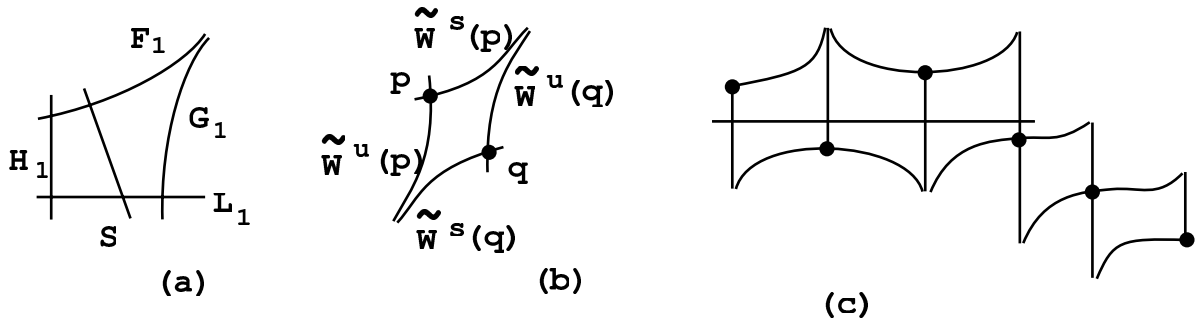}
\caption{a. Perfect fits in $\mi$,
b. A lozenge, c. A chain of lozenges.}
\label{loz}
\end{figure}

\begin{define}{(\cite{Fe2,Fe3})}\label{def:lozenge}
Lozenges - A lozenge $R$ is a region of $\mi$ whose closure
is homeomorphic to a rectangle with two corners removed.
More specifically two points $p, q$ define the corners
of a lozenge if there are half leaves $A, B$ of
$\ws(p), \wu(p)$ defined by $p$
and  $C, D$ half leaves of $\ws(q), \wu(q)$ defined by $p, q$, so
that $A$ and $D$ form a perfect fit and so do
$B$ and $C$. The region bounded by the lozenge
$R$ does not have any singularities.
%
%
%
%
The sides of $R$ are $A, B, C, D$.
The sides are not contained in the lozenge,
but are in the boundary of the lozenge.
There may be singularities in the boundary of the lozenge.
See fig. \ref{loz}, b.
\end{define}


There are no singularities in the lozenges,
which implies that
$R$ is an open region in $\mi$.


Two lozenges are {\em adjacent} if they share a corner and
there is a stable or unstable leaf
intersecting both of them, see fig. \ref{loz}, c.
Therefore they share a side.
A {\em chain of lozenges} is a collection $\{ \cc _i \}, 
i \in I$, where $I$ is an interval (finite or not) in ${\bf Z}$;
so that if $i, i+1 \in I$, then 
${\mathcal C}_i$ and ${\mathcal C}_{i+1}$ share
a corner, see fig. \ref{loz}, c.
Consecutive lozenges may be adjacent or not.
The chain is finite if $I$ is finite.

\begin{define}{(scalloped chain)}\label{def:scallop}
Let ${\mathcal C}$ be a chain of lozenges.
If any two
successive lozenges in the chain are adjacent along 
one of their unstable sides (respectively stable sides),
then the chain is called {\em s-scalloped} 
(respectively {\em u-scalloped}) (see
fig. \ref{pict} for an example of a $s$-scalloped region). 
Observe that a chain is s-scalloped if
and only if there is a stable leaf intersecting all the 
lozenges in the chain. Similarly, a chain is u-scalloped
if and only if there is an unstable leaf intersecting 
all the lozenges in the chain. 
The chains may be infinite.
A scalloped chain is a chain that is either $s$-scalloped or
$u$-scalloped.
\end{define}

For simplicity when considering scalloped chains we also include any half leaf which is
a boundary side of two of the lozenges in the chain. The union of these
is called a {\em {scalloped region}} which is then a connected set.

We say that two orbits $\gamma, \alpha$ of $\wwp$ 
(or the leaves $\ws(\gamma), \ws(\alpha)$)
are connected by a 
chain of lozenges $\{ {\mathcal C}_i \}, 1 \leq i \leq n$,
if $\gamma$ is a corner of ${\mathcal C}_1$ and $\alpha$ 
is a corner of ${\mathcal C}_n$.

\vskip .2in
\noindent
{\bf {Fat tree of lozenges}}

\begin{define}{(fat tree of lozenges ${\mathcal G}(\alpha)$)}
\label{def:tree}
Let  $\alpha$ be an orbit of $\wwp$. We define ${\mathcal G}(\alpha)$ as the graph such that:

-- the vertices ${\mathcal G}(\alpha)$ are orbits of $\wwp$ connected to
$\alpha$ by a chain of lozenges, 

-- there is an edge in ${\mathcal G}(\alpha)$ between $\beta$ and $\gamma$ if and only if there is
a lozenge with corners $\alpha$, $\beta$.
\end{define}

One easily proves (see for example \cite{Fe2} for Anosov flows):

\begin{proposition}
For every $\alpha$ in $\oo$, ${\mathcal G}(\alpha)$  is a tree.
\end{proposition}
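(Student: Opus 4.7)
The plan is to verify the two defining properties of a tree for ${\mathcal G}(\alpha)$: connectedness and the absence of cycles.

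Connectedness is immediate from the definition of the vertex set. Given any vertex $\beta$, there is, by hypothesis, a chain of lozenges ${\mathcal C}_1, \ldots, {\mathcal C}_k$ with $\alpha$ a corner of ${\mathcal C}_1$ and $\beta$ a corner of ${\mathcal C}_k$. Setting $\gamma_0 = \alpha$, $\gamma_k = \beta$, and for $1 \le i \le k-1$ letting $\gamma_i$ be the corner shared by ${\mathcal C}_i$ and ${\mathcal C}_{i+1}$, one obtains a sequence in which consecutive $\gamma_{i-1}, \gamma_i$ are corners of the common lozenge ${\mathcal C}_i$, hence are joined by an edge of ${\mathcal G}(\alpha)$. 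This yields a path from $\alpha$ to $\beta$ in the graph.

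For acyclicity I argue by contradiction. Suppose there is a cycle of minimal length $n \ge 3$, given by distinct orbits $\gamma_0, \gamma_1, \ldots, \gamma_{n-1}$ and lozenges $R_0, \ldots, R_{n-1}$ with $R_i$ having corners $\gamma_i$ and $\gamma_{i+1 \bmod n}$. Minimality makes the $R_i$ pairwise distinct open regions in $\oo$ whose closures pairwise meet only at the corners $\gamma_i$. The key local observation is that at each $\gamma_i$ the two adjacent lozenges $R_{i-1}, R_i$ must occupy two distinct quadrants of the local stable/unstable cross at $\gamma_i$: otherwise both lozenges would be determined by the same pair of half-leaves of $\ws(\gamma_i), \wu(\gamma_i)$, forcing the second corners to agree and $R_{i-1} = R_i$. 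Propagating this quadrant data around the cycle by means of the perfect-fit relations on the sides of each $R_i$, and using that the orbit space is a plane, one concludes that the bounded complementary component of the cycle in $\oo \cong \rrrr^2$ must carry a stable or unstable product region in the sense of Definition \ref{defsta}. Theorem \ref{prod} then forces $\Phi$ to be topologically equivalent to a suspension Anosov flow. But in such a flow every stable leaf intersects every unstable leaf in exactly one orbit (Proposition \ref{susp}), so there are no perfect fits and hence no lozenges at all, contradicting the existence of the cycle.

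The main obstacle is precisely the local-to-global step of the acyclicity argument: assembling the quadrant information at the corners, via the perfect-fit asymptotes on the sides of the $R_i$, into a genuine product region filling the bounded complementary disc of the cycle in $\oo$. This is a combinatorial-planar analysis of the kind developed in \cite{Fe2} for Anosov flows, which adapts to the pseudo-Anosov setting because lozenge interiors contain no singularities, even though the corners $\gamma_i$ themselves may be singular orbits.
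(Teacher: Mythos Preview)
Your connectedness argument is correct. The acyclicity argument, however, has a genuine gap at exactly the point you flag as the ``main obstacle,'' and the strategy you outline does not actually close it.

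Two specific problems. First, minimality of the cycle does \emph{not} by itself guarantee that the closures $\overline{R_i}$ meet only at the shared corners: that is clear for consecutive lozenges (they sit in distinct quadrants at $\gamma_i$), but for non-consecutive $R_i$, $R_j$ you have given no reason why their interiors must be disjoint, so the ``bounded complementary component'' is not yet well-defined.

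Second, and more seriously, even granting a Jordan region bounded by the cycle, that region is \emph{bounded} in $\oo$, whereas a product region in the sense of Definition~\ref{defsta} is a union of unstable (or stable) \emph{half-leaves} over a flow band and is therefore unbounded in one direction. A bounded region enclosed by finitely many lozenge sides is at best a rectangle, and rectangles exist in every pseudo-Anosov flow; they do not trigger Theorem~\ref{prod}. So the implication ``cycle $\Rightarrow$ product region $\Rightarrow$ suspension'' does not go through as stated.

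The argument the paper has in mind (via \cite{Fe2}) avoids Theorem~\ref{prod} entirely and is more elementary. The key point you already proved is the right starting ingredient: at each interior vertex $\gamma_i$ of a non-backtracking path, the two incident lozenges $R_{i-1}$, $R_i$ lie in distinct quadrants at $\gamma_i$, so either $\ws(\gamma_i)$ or $\wu(\gamma_i)$ is a properly embedded line in $\oo$ separating $\gamma_{i-1}$ from $\gamma_{i+1}$. One then shows by induction along the path, using these separating leaves, that the two endpoints of any non-backtracking path of length $\geq 1$ lie in different complementary components of a suitable leaf, hence are distinct. This rules out cycles directly, without ever producing an infinite product structure.
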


In particular for any two orbits $\delta, \gamma$ connected by a chain
of lozenges, then  there is a unique indivisible or minimal 
chain of lozenges $-$ where no backtracking on lozenges
is allowed.

The proposition implies that
${\mathcal G}(\alpha)$ is naturally embedded in the $2$-plane $\oo$. 
Hence, once fixed an orientation on $\oo$, there is, for every vertex $\alpha$, 
a cyclic order on the set
of edges of ${\mathcal G}(\alpha)$ adjacent to $\alpha$.
Moreover, ${\mathcal G}(\alpha)$
is naturally equipped with a structure of a {\em fat graph:} it is a retract of 
an orientable surface with boundary (the tubular neighborhood of its embedding in $\oo$).
This object will be extremely useful in this article.

If ${\mathcal C}$ is a lozenge with corner orbits $\beta, \gamma$ and
$g$ is a non trivial covering translation 
leaving $\beta, \gamma$ invariant (and so also the lozenge),
then $\pi(\beta), \pi(\gamma)$ are closed orbits
of $\wwp$ which are freely homotopic to the 
{\underline {inverse}} of each
other \cite{Fe2}.
Here we consider the closed orbits
$\pi(\beta), \pi(\gamma)$ traversed in the positive flow direction
and we allow $\pi(\beta), \pi(\gamma)$ to be non indivisible
closed orbits. In other words it is the closed orbit associated
to the deck transformation $g$, which may not be indivisible.

\begin{theorem}{(\cite{Fe2,Fe3})}{}
Let $\Phi$ be a pseudo-Anosov flow in $M^3$ closed and let 
$F_0 \not = F_1 \in \wls$.
Suppose that there is a non trivial covering translation $g$
with $g(F_i) = F_i, i = 0,1$.
Let $\alpha_i, i = 0,1$ be the periodic orbits of $\wwp$
in $F_i$ so that $g(\alpha_i) = \alpha_i$.
Then $\alpha_0$ and $\alpha_1$ are connected
by a finite chain of lozenges 
$\{ {\mathcal C}_i \}, 1 \leq i \leq n$ and $g$
leaves invariant each lozenge 
${\mathcal C}_i$ as well as their corners.
\label{chain}
\end{theorem}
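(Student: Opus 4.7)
The plan is to work entirely in the orbit space $\oo\cong\rrrr^2$ with its singular foliations $\oos,\oou$. Write $p_0,p_1\in\oo$ for the projections of $\alpha_0,\alpha_1$; these are $g$-fixed points lying on the distinct $g$-invariant leaves $\Theta(F_0),\Theta(F_1)$ of $\oos$. Since $g$ is non-trivial, its action on any $g$-invariant leaf of $\wls$ is a non-trivial flow-translation, so each $g$-invariant leaf of $\oos$ (respectively $\oou$) contains exactly one $g$-fixed point. Hence the $g$-fixed set in $\oo$ is in bijection with the intersections of the $g$-invariant leaves of the two foliations; the task reduces to producing a chain of lozenges in $\oo$ with $g$-fixed corners $p_0=q_0,q_1,\dots,q_n=p_1$.

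The construction of the chain proceeds by building a $g$-invariant staircase from $p_0$ to $p_1$. Starting with any embedded arc $\tau$ from $p_0$ to $p_1$ transverse to $\oos\cup\oou$ off a finite set, I would straighten it so that it alternates between arcs contained in leaves of $\oos$ and of $\oou$. The union $\tau\cup g(\tau)$ cobounds a topological disk $D\subset\oo$ whose only a priori $g$-fixed boundary points are $p_0,p_1$; using the singular product structure of $\oos,\oou$ inside $D$ together with the $g$-invariance of $D$, the canonical straightening produces a finite sequence of switching points $q_0,\dots,q_n$ that must themselves be $g$-fixed. For two consecutive corners $q_j,q_{j+1}$, the stable and unstable leaves through $q_j$ cannot both meet the stable and unstable leaves through $q_{j+1}$, for otherwise one would obtain a rectangle whose interior would be part of a $g$-invariant stable or unstable product region in the sense of Definition~\ref{defsta}, and Theorem~\ref{prod} would force $\Phi$ to be a suspension Anosov flow, in which case $F_0=F_1$ contradicting the hypothesis. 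Instead, two pairs of the boundary sides emanating from $q_j,q_{j+1}$ must form perfect fits in the sense of Definition~\ref{def:pfits}; these assemble into a lozenge $\mathcal{C}_j$ with corners $q_j,q_{j+1}$, and since both corners are $g$-fixed, $g$ preserves $\mathcal{C}_j$.

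The step I anticipate as the main obstacle is the finiteness of $n$ and, in tandem, the verification that the straightening really forces perfect fits rather than genuine crossings between successive $g$-invariant leaves. For finiteness I would argue that each corner $q_j$ projects to a periodic orbit of $\Phi$, and by $g$-invariance of the whole chain all these projections are freely homotopic, up to orientation, to $\pi(\alpha_0)$, as recorded in the remark preceding the statement. Since $M$ is closed and the $\pi_1(M)$-action on $\mi$ is proper, any free homotopy class of closed orbits of $\Phi$ contains only finitely many representatives, bounding the combinatorial distance from $p_0$ to $p_1$ inside the fat tree $\mathcal G(\alpha_0)$ of Definition~\ref{def:tree}. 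Concatenating the lozenges $\mathcal{C}_1,\dots,\mathcal{C}_n$ then yields the desired finite $g$-invariant chain joining $\alpha_0$ to $\alpha_1$.
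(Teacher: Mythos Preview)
First, note that the paper does not itself prove this theorem; it is quoted as a background result from \cite{Fe2,Fe3}. So there is no in-paper proof to compare against, only the original arguments in those references.

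On its own terms, your proposal has two genuine gaps. The first is the staircase construction: you assert that ``the canonical straightening produces a finite sequence of switching points $q_0,\dots,q_n$ that must themselves be $g$-fixed,'' but nothing in the setup forces this. An arbitrary stable--unstable staircase from $p_0$ to $p_1$ has no reason to have $g$-fixed corners, and the disk bounded by $\tau\cup g(\tau)$ does not by itself single out a $g$-invariant staircase. The actual proofs in \cite{Fe2,Fe3} proceed quite differently: one shows first that a $g$-invariant leaf of $\wls$ and a disjoint $g$-invariant leaf of $\wlu$ must make a perfect fit (this uses a limiting argument with the contraction/expansion dynamics of $g$, not a straightening of arcs), and then one iterates this to build the chain of lozenges step by step, each new corner being the unique $g$-fixed orbit on the new leaf.

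The second gap is your finiteness argument, which is circular. You invoke that ``any free homotopy class of closed orbits of $\Phi$ contains only finitely many representatives,'' but this fact is a \emph{consequence} of the theorem you are proving (indeed, it is essentially equivalent to it for periodic orbits). It cannot be used as input. Finiteness in the actual argument comes from the fact that consecutive lozenges in the chain are built along a fixed path in $\oo$ from $p_0$ to $p_1$, and the product/rectangle structure prevents the construction from cycling back; equivalently, one argues within the tree $\mathcal G(\alpha_0)$ that the path between two vertices is finite because the tree is simplicial. Your rectangle-versus-lozenge dichotomy can be made to work, but not via product regions and Theorem~\ref{prod}: a single rectangle is not a product region. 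The correct observation is that if two distinct $g$-fixed points $q_j,q_{j+1}$ were opposite corners of a rectangle, then the other two corners would also be $g$-fixed, giving two $g$-fixed points on the same stable leaf, which is impossible.
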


In particular:

\begin{proposition}
Let $g$ be a non-trivial element of $\pi_1(M)$ fixing two orbits $\alpha$ 
and $\gamma$. Then ${\mathcal G}(\alpha)={\mathcal G}(\gamma)$.
\end{proposition}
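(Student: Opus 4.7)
The plan is to reduce the statement directly to Theorem~\ref{chain}. The case $\alpha=\gamma$ is trivial, so assume $\alpha\neq\gamma$; since a stable leaf and an unstable leaf of $\wwp$ meet in at most one orbit, at least one of $\ws(\alpha)\neq\ws(\gamma)$ or $\wu(\alpha)\neq\wu(\gamma)$ holds, and by the symmetric roles of stable and unstable I may assume the former.

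Since $g$ fixes both $\alpha$ and $\gamma$, it leaves invariant the distinct stable leaves $F_0:=\ws(\alpha)$ and $F_1:=\ws(\gamma)$, and the $g$-fixed periodic orbit of $\wwp$ in $F_i$ is precisely $\alpha$ (for $i=0$) or $\gamma$ (for $i=1$). Theorem~\ref{chain} then produces a finite chain of lozenges $\{\mathcal C_1,\dots,\mathcal C_n\}$ connecting $\alpha$ to $\gamma$, each invariant under $g$. In particular $\gamma$ is a vertex of $\mathcal G(\alpha)$, which is the entire substance of the proposition; the remainder is a formality.

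To finish, I would observe that ``being connected by a chain of lozenges'' is an equivalence relation on $\oo$: reflexive via the empty chain, symmetric by reversing a chain, and transitive by concatenating chains. Hence the vertex set of $\mathcal G(\beta)$ depends only on the equivalence class of $\beta$, and the classes of $\alpha$ and of $\gamma$ coincide. Since the edges in Definition~\ref{def:tree} are determined by pairs of orbits being opposite corners of a single lozenge, with no reference to the base orbit, the edge sets of $\mathcal G(\alpha)$ and $\mathcal G(\gamma)$ likewise coincide; the common planar embedding into $\oo$ then gives equality as fat graphs. The only potentially awkward point in the argument is the degenerate case $\ws(\alpha)=\ws(\gamma)$, which I dispose of by the symmetric application of Theorem~\ref{chain} to the unstable foliation.
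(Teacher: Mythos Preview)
Your proof is correct and follows exactly the paper's approach: the proposition is stated there as an immediate corollary (``In particular:'') of Theorem~\ref{chain}, and you carry out precisely that reduction, together with the routine observation that ``connected by a chain of lozenges'' is an equivalence relation. One small remark: your concern about the degenerate case $\ws(\alpha)=\ws(\gamma)$ is unnecessary, since a periodic leaf of $\wls$ contains a unique periodic orbit of $\wwp$, so two distinct $g$-fixed orbits automatically lie in distinct stable leaves; but the extra caution does no harm.
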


We think of a fat tree as a simplicial tree.
Observe that $g$ as above
naturally acts simplicially on ${\mathcal G}(\alpha)$. It does not necessarily 
preserve the cyclic order 
on links of vertices in ${\mathcal G}(\alpha)$, since it does not necessarily preserve 
the orientation of $\oo$.

\begin{define}{(the tree ${\mathcal G}(g)$)} Let $g$ in $\pi_1(M)$ 
fixing an orbit $\alpha$. 
The $g$-fixed points in ${\mathcal G}(\alpha)$ 
form a connected subtree because of simplicial action.
This subtree is denoted by ${\mathcal G}(g)$.
\end{define}

From this observation we infer several interesting facts:

\begin{proposition}\label{pro:treefacts}
Let $g$ be a non-trivial element of $\pi_1(M)$. All of
the following statements are true:

\begin{enumerate}

\item For any $n \neq 0$, $g$ admits a fixed point in $\oo$ if and only 
if $g^n$ admits a fixed point in $\oo$ .
\item Assume that $g$ fixes an orbit $\alpha \in \oo$. 
Then, some positive power $g^{p}$ acts trivially on ${\mathcal G}(\alpha)$. 
\item Let $p$ be an integer as in item 2. Let $Z(g^{p})$ be the
pseudocentralizer of $g^{p}$ in $\pi_1(M)$, ie. the subgroup comprised of
elements $f$ such that $fg^{p}f^{-1}=g^{\pm p}$. Then $Z(g^{p})$
acts on the tree ${\mathcal G}(\alpha)={\mathcal G}(g^{p})$. 
\item Assume that $g$ preserves a lozenge $\mathcal L$. Then, $g$ 
preserves individually each corner of $\mathcal L$. Moreover, $g$
preserves the orientation of $\oo$, and acts trivially on 
${\mathcal G}(\alpha)={\mathcal G}(\beta)$, where $\alpha$ and $\beta$ 
are the corners of $\mathcal L$.
\end{enumerate}
\end{proposition}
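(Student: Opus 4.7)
The four statements are tightly interlocked, and my strategy is to establish item~4 first, from which items~1, 2, and 3 follow cleanly.

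For item~4, suppose $g$ preserves a lozenge $\mathcal{L}$ with corners $\alpha,\beta$. Being a deck transformation, $g$ preserves $\wwp$ together with both singular foliations $\wls$, $\wlu$, hence permutes the two stable sides and the two unstable sides of $\mathcal{L}$. The essential point is to rule out the possibility that $g$ swaps the two stable sides, equivalently that $g$ exchanges $\alpha$ and $\beta$. This uses the fact recalled in the paragraph preceding Theorem~\ref{chain}: the closed orbits $\pi(\alpha)$ and $\pi(\beta)$ are freely homotopic only after reversing the flow direction on one of them, so a deck transformation realizing the exchange $\alpha\leftrightarrow\beta$ would reverse flow orientation on $\pi(\alpha)$, which is impossible since deck transformations conjugate $\wwp$ to itself. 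Hence $g$ fixes $\alpha$ and $\beta$ individually. Next, $g$ preserves the particular quadrant at $\alpha$ containing $\mathcal{L}$, so the finite permutation $g$ induces on the cyclically ordered prongs of $\wls\cup\wlu$ at $\alpha$ has a fixed element; an orientation-reversing such permutation would have no fixed prong, so $g$ preserves the cyclic order at $\alpha$, which by continuity extends to a global orientation of $\oo$. The same cyclic-permutation-with-a-fixed-point argument shows $g$ fixes every prong, hence every lozenge with corner $\alpha$; induction along ${\mathcal G}(\alpha)$ (each newly fixed vertex repeats the same analysis, using item~4 for the inductive step) gives the trivial action on ${\mathcal G}(\alpha)={\mathcal G}(\beta)$.

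For item~1, the forward direction is trivial. Conversely, suppose $g^n(\alpha)=\alpha$. By Theorem~\ref{chain}, the set $\mathrm{Fix}(g^n)\cap\oo$ coincides with the vertex set of the subtree ${\mathcal G}(g^n)\subseteq{\mathcal G}(\alpha)$. Since $g$ centralizes $g^n$, it preserves ${\mathcal G}(g^n)$ and acts on it simplicially. The classification of simplicial automorphisms of a tree gives three cases: $g$ fixes a vertex, inverts an edge, or translates along an axis of length $\ell>0$. A translation would force $g^n$ to translate by $n\ell\neq 0$, contradicting that $g^n$ acts as the identity on ${\mathcal G}(g^n)$. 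An edge inversion would make $g$ preserve a lozenge, and item~4 then forces $g$ to fix both its corners, so a vertex after all. In every case $g$ has a fixed vertex, i.e.\ a fixed orbit in $\oo$.

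For item~2, the number of lozenges with a given corner $\beta$ is at most the number of quadrants at $\beta$, which is uniformly bounded by a constant $D$ depending only on $\Phi$ (the prong numbers of the finitely many singular orbits are bounded, and regular orbits contribute the constant value four). Thus the permutation $g$ induces on the at most $D$ edges at $\alpha$ has order dividing $D!$; choosing $p=D!$, the element $g^{p}$ preserves some lozenge with corner $\alpha$ (if ${\mathcal G}(\alpha)=\{\alpha\}$ there is nothing to prove), and item~4 applied to $g^{p}$ yields the trivial action on the entire tree. Finally, item~3 is immediate: if $fg^{p}f^{-1}=g^{\pm p}$ then $f$ carries fixed orbits of $g^{p}$ to fixed orbits of $g^{p}$ and preserves the lozenge-adjacency relation, which gives the desired simplicial action of $Z(g^{p})$ on ${\mathcal G}(g^{p})={\mathcal G}(\alpha)$. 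The main obstacle is the very first step of item~4, namely excluding the corner swap, because it is the only point where the flow direction (as opposed to just the foliations) really enters; everything else is formal manipulation of trees and quadrants once that is in place.
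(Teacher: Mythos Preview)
Your overall strategy and ordering (establishing item~4 first, then deducing 1, 2, 3) is sound and at least as clean as the paper's. The substantive difference is in how you rule out the corner swap in item~4. The paper argues directly: if $g$ exchanged $\alpha$ and $\beta$, then composing $g\colon A\to C$ with the unstable holonomy $C\to A$ inside $\mathcal L$ gives an orientation-reversing self-map of the stable half-leaf $A$, which must fix some interior stable leaf $U$; then $g^2$ fixes both $\alpha$ and $U\cap A$, producing two distinct periodic orbits in $\pi(A)$, a contradiction. Your route via the ``freely homotopic to the inverse'' fact is legitimate, but as written it is too compressed. To make it rigorous you must apply that fact to $g^2$ (which is nontrivial by torsion-freeness and does fix both corners): this says $g^2$ translates $\alpha$ and $\beta$ along the flow by amounts of \emph{opposite} sign; on the other hand, since $g$ commutes with $g^2$ and conjugates the action on $\alpha$ to the action on $\beta$ while preserving flow direction, these signs must be \emph{equal}. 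Hence $g^2=\mathrm{id}$, contradiction. The sentence ``would reverse flow orientation on $\pi(\alpha)$'' does not by itself convey this.

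There is also a small slip in the orientation step: the claim that an orientation-reversing permutation of the cyclically ordered prongs at $\alpha$ ``would have no fixed prong'' is false in general (reflections of a $2p$-gon can fix one or two vertices). What actually works is that $g$ preserves the quadrant at $\alpha$ containing $\mathcal L$, hence fixes \emph{two adjacent} prongs (the stable and unstable sides of $\mathcal L$ at $\alpha$); an element of the dihedral group on $2p$ letters fixing two adjacent vertices is the identity, so $g$ fixes every prong at $\alpha$ and is orientation-preserving. The paper phrases this simply as $g(A)=A$, $g(B)=B$. With these two clarifications your proof goes through; your treatment of items~1--3 matches the paper's, and your explicit appeal to item~4 to exclude edge inversions in item~1 is a point the paper leaves implicit.
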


\begin{proof}\-
\begin{enumerate}
\item Suppose $g^n(\alpha) = \alpha$ with $\alpha$ orbit of $\wwp$.
Then $g^n(g(\alpha)) = g(\alpha)$, so by theorem \ref{chain},
$\alpha$ and $g(\alpha)$ are connected by a chain of lozenges
and therefore ${\mathcal G}(\alpha) = {\mathcal G}(g(\alpha)) = 
g({\mathcal G}(\alpha))$. Hence $g$ acts on ${\mathcal G}(\alpha)$.
The result now follows easily
from the fact that if $g$ acts freely on a tree, then $g^n$ acts freely on the tree.
\item Let $k$ be the number of prongs at $\alpha$.
Then $g^{2}$ preserves the orientation of $\oo$, 
hence the cyclic ordering of the link of $\alpha$. 
Hence $g^{2k}$ fixes every vertex of ${\mathcal G}(\alpha)$
adjacent to $\alpha$. But if $g^{2k}$ fixes a point $\gamma$
in ${\mathcal G}(\alpha)$ and an edge in ${\mathcal G}(\alpha)$ adjacent to $\gamma$, 
it fixes every vertex adjacent to $\gamma$ (once more, 
due to the preservation of orientation of $\oo$ by $g^{2k}$).
Our claim follows by induction.
\item Let $f$ in $Z(g^p)$ and $\beta$ a vertex in ${\mathcal G}(\alpha)$.
Then $g^p f(\beta) = ff^{-1}g^pf(\beta) = f(g^{\pm p}(\beta))=f(\beta)$ by (2). By theorem \ref{chain} $f(\beta)$ 
is in ${\mathcal G}(\alpha)$ and so $f$ acts on ${\mathcal G}(\alpha)$.
\item Let $\alpha$, $\beta$ be the corners of $\mathcal L$. 
Assume by way of contradiction that $g(\alpha)=\beta$ and $g(\beta)=\alpha$.
Let $A$, $C$ be the stable half leaves of $\ws(\alpha)$, $\ws(\beta)$ contained 
in the closure of $\mathcal L$. Then, $g(A)=C$, and composing $g$
with the holonomy map from $C$ to $A$ along leaves of $\wlu$ defines an orientation 
reversing map from $A$ onto itself. This map must admit
a fixed point, hence there is a leaf $U$ of $\wls$ fixed by $g$ and 
intersecting $\mathcal L$. 
Now $g^2$ fixes $U$ and $A$ and hence leaves invariant the orbit
$U \cap A$. This produces 2 distinct periodic orbits in $\pi(A)$,
contradiction.

Hence, $g$ fixes $\alpha$ and $\beta$. Keeping the notation above, 
we have $g(A)=A$ and $g(B)=B$ (where $B$ is the $g$ invariant
unstable half-leaf 
of $\wu(\alpha)$ in the boundary of $\mathcal L$). It follows
that $g$ preserves the orientation of $\oo$. It therefore preserves the cyclic 
ordering along vertices of ${\mathcal G}(\alpha)$. It follows as in item 2 that
$g$ acts trivially on ${\mathcal G}(\alpha)$.
\end{enumerate}

\end{proof}

The main result concerning non Hausdorff behavior in the leaf spaces
of $\wls, \wlu$ is the following:

\begin{theorem}{\cite{Fe2,Fe3}}\label{theb}
Let $\Phi$ be a pseudo-Anosov flow in $M^3$. 
Suppose that $F \not = L$
are not separated in the leaf space of $\wls$.
Then $F$ is periodic and so is $L$. More precisely,
there is a non-trivial element $g$ of $\pi_1(M)$ such that $g(F)=F$ and $g(L)=L$.
Moreover, let $\alpha$, $\beta$ be the unique $g$-fixed points in $F$, $L$, respectively.
Then, the chain of lozenges connecting $\alpha$ to $\beta$ is s-scalloped (see figure \ref{pict}).
\end{theorem}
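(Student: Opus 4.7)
The plan is to first translate the non-separation hypothesis into a perfect-fit picture in the orbit space $\oo$, then use compactness of $M$ to produce a deck transformation $g$ stabilizing both $F$ and $L$, and finally identify the resulting chain of lozenges as $s$-scalloped. Set $\ell_F=\Theta(F)$ and $\ell_L=\Theta(L)$, so that $\ell_F$ and $\ell_L$ are non-separated leaves of $\oos$. Choose a sequence of leaves $\ell_n$ of $\oos$ converging to both $\ell_F$ and $\ell_L$, and pick orbits $x\in\ell_F$ and $y\in\ell_L$ together with $x_n,y_n\in\ell_n$ such that $x_n\to x$ and $y_n\to y$ (using compact unstable transversals through $x$ and $y$).

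The first substantive step is to extract perfect fits. Since $x_n$ and $y_n$ both lie on the common leaf $\ell_n$, one has $\oou(x_n)=\oou(y_n)$. Passing to the limit and using that a stable leaf and an unstable leaf of $\oo$ meet in at most one orbit, one shows that $\oou(x)$ cannot cross $\ell_L$, while the product structure of $\oos$, $\oou$ in the region bordered by $\ell_F$, $\ell_L$ and $\ell_n$ forces $\oou(x)$ to be asymptotic to $\ell_L$ without meeting it. Lifting to $\mi$, this is precisely the statement that $L$ and the unstable leaf through a suitable lift of $x$ form a perfect fit in the sense of Definition~\ref{def:pfits}; symmetrically, $F$ and an unstable leaf through a lift of $y$ form a perfect fit.

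The hard part is the next step: producing a non-trivial $g\in\pi_1(M)$ with $g(F)=F$ and $g(L)=L$. Here compactness of $M$ is essential. Using the perfect fits just produced, one forms for each $n$ a rectangle $R_n\subset\mi$ bounded by flow bands of $F$, $L$ and (lifts of) $\ell_n$; these rectangles have the same combinatorial type but exit every compact set of $\mi$. A recurrence argument then supplies deck transformations $h_n$ carrying $R_n$ into a fixed fundamental domain; extracting a convergent subsequence of the comparison elements $h_n h_{n+1}^{-1}$ yields a non-trivial $g\in\pi_1(M)$ with $g(F)=F$. Uniqueness of the unstable leaf that makes a perfect fit with a given stable leaf (a standard rigidity for perfect fits) forces $g$ also to preserve the leaf $L$. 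Because $g$ acts simplicially on the singular tree $\Theta(F)$, it has a fixed point there, giving a periodic orbit $\alpha\subset F$ with $g(\alpha)=\alpha$, and likewise $\beta\subset L$.

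Finally, apply Theorem~\ref{chain} to $g$, $\alpha$, $\beta$: there is a chain of lozenges ${\mathcal C}_1,\ldots,{\mathcal C}_n$ connecting $\alpha$ to $\beta$, each preserved by $g$. To see that this chain is $s$-scalloped, recall that consecutive lozenges share either an unstable or a stable side. Reapplying the perfect-fit argument of the second step at each intermediate pair of non-separated stable leaves along the chain produces, in the limit, a stable leaf of $\wls$ that meets every lozenge ${\mathcal C}_i$ simultaneously (heuristically, the approximating leaves $\ell_n$ themselves witness this stable leaf). By the characterization in Definition~\ref{def:scallop}, the existence of such a common stable transversal is equivalent to all consecutive lozenges being glued along their unstable sides, i.e.\ the chain is $s$-scalloped, which completes the proof.
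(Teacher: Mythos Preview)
The paper does not actually prove this theorem: it is stated in the background section with a citation to \cite{Fe2,Fe3} and no proof is given here, so there is nothing in the present paper to compare your argument against.

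That said, your sketch has real gaps at exactly the places where the work lies. First, the assertion you invoke to get $g(L)=L$ --- ``uniqueness of the unstable leaf that makes a perfect fit with a given stable leaf'' --- is false as stated: a stable leaf can make perfect fits with several unstable leaves (for instance the two unstable sides of a lozenge both make perfect fits with a stable side, and at a $p$-prong corner there are several such configurations). What one actually needs is control over the entire set of leaves non-separated from $F$ on the relevant side, together with an argument that $g$ permutes this set with a finite orbit; this is considerably more delicate than a uniqueness statement. Second, your recurrence step (``extracting a convergent subsequence of $h_n h_{n+1}^{-1}$ yields a non-trivial $g$ with $g(F)=F$'') is only a slogan: one must set up the geometry so that the recurring rectangles force the limiting deck transformation to preserve $F$ rather than some nearby leaf, and this is where the substance of the argument in \cite{Fe2,Fe3} lives. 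Third, the justification that $g$ fixes an orbit in $F$ is not ``simplicial action on a tree'' --- when $F$ is nonsingular $\Theta(F)$ is a line and a translation has no fixed point --- but rather the contracting dynamics along stable leaves. Finally, your $s$-scalloped conclusion is asserted (``heuristically, the approximating leaves $\ell_n$ themselves witness this'') rather than argued; one needs to show that \emph{every} pair of consecutive lozenges in the chain shares an unstable side, which requires analyzing the intermediate corners, not just the endpoints $\alpha$ and $\beta$.
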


%

\begin{figure}
\centeredepsfbox{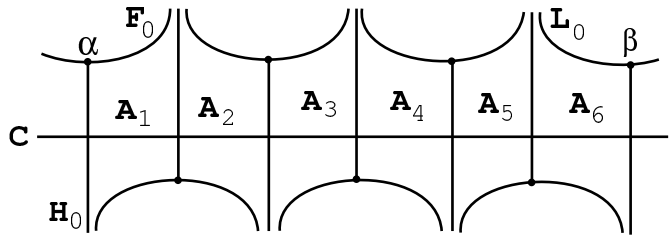}
\caption{
The correct picture between non separated
leaves of $\wls$.}
\label{pict}
\end{figure}

\vskip .2in
\noindent
{\bf {Non-Hausdorff trees}}
\vskip .1in

A {\em segment} is a set with a linear order which is 
isomorphic to an interval in $\rrrr$:
$[0,1], [0,1), (0,1)$ or $[0,0]$. 
Type $(0,1)$ is called an {\em open} segment
and type $[0,0]$ is a {\em degenerate} segment.
A {\em closed} segment is one of type either $[0,0]$ or $[0,1]$, ie. admitting a minimal and a maximal element.
A {\em half open} segment is one of type $[0,1)$, where we also consider the reverse linear
order.
A {\em subsegment} $C$ is a subset of a segment $I$  so that
if $x, y$ are in $C$ and $z$ in $C$ satisfies
$x < z < y$, then $z$ is also in $C$.
With the induced linear order,  $C$ is also 
a segment.
If a set $Z$ is a union of segments, then given $x$ in $Z$,
a {\em prong} at $x$ is a segment $I$ in $Z$
of type $[0,1)$ or $[0,1]$
with $x \in I$ corresponding to $0$.
A subprong of a prong $I$ at $x$ is a subsegment of $I$ of
type $[0,1)$ with $x$ corresponding to $0$.
Two prongs $I_1, I_2$ at $x$ are distinct if $I_1 \cap I_2 = \{ x \}$,
or equivalently they do not share a subprong at $x$.

\begin{define}{(non Hausdorff tree)}{\cite{Fe5}}
A non Hausdorff tree is a space $\hp$ satisfying:

1) \ $\hp$ is a union of open segments,


2) \ $\hp$ is arcwise connected $-$ for each $x, y \in \hp$, there
is a finite chain of segments $I_1, ..., I_n$ with
$x \in I_1, y \in I_n$ and $I_i \cap I_{i+1} \not = \emptyset$
for any $1 \leq i < n$,

3)  \ Points separate $\hp$ in the following way  $-$
for any $x \in \hp$ and $I_1, I_2$ distinct prongs at $x$ the
following happens:
Given $y_1 \in I_1 - \{ x \}, \ y_2 \in I_2 - \{ x \}$,
then any finite chain of segments 
 from $y_1$ to $y_2$ (as in (2) above) must contain
$x$ in at least one of the segments.

If $I_1, I_2$ are two segments with
$I_1 \cap I_2$ a single point which is an endpoint
of both $I_1$ and $I_2$, then given compatible
orders in $I_1, I_2$ we extend them to an order
in $I_1 \cup I_2$, which is then 
a segment of $\hp$.
\end{define}

A priori there may be infinitely
or even uncountably many distinct prongs at $x$.

\begin{define}{(topology of $\hp$ $-$ \cite{Fe5})}{}
We say that a subset $A$ of $\hp$ is open in $\hp$ if for any
$x \in A$ the following happens: for any prong $I$ at
$x$, there is a subprong $I'$ at $x$ ($I' \subset I$)
so that $I' \subset A$. 
\end{define}

Equivalently $A$ is open if for any open segment $S$ and
$x$ in $A \cap S$, there is an open subsegment $S'$
containing $x$ and contained in $A$.

It follows from condition 3)  of non Hausdorff trees that
if $I_1$ and $I_2$
are two segments, then $I_1 \cap I_2$ is either empty or
is a subsegment of both $I_1, I_2$, which may be a point.
A point $x \in \hp$ is {\em regular} if given any two
open segments $I_1, I_2$ with $x \in I_1 \cap I_2$, then
$I_1 \cap I_2$ is an {\underline {open}} segment in $\hp$.
Otherwise $x$ is {\em singular} and $\hp$ is ``treelike" in $x$.
Equivalently a point is regular if there are only tw{}o distinct
prongs at $x$.

\vskip .1in
It is easy to check that if $V$ is an interval in $\rrrr$ with
the standard topology and $f: V \rightarrow \hp$ is an
order preserving bijection to a segment in $\hp$, then
$f$ is a continuous map.

Given $x \not = y$ then for any prong at $y$
there is a subprong disjoint from $x$,
hence contained in $\hp - \{ x \}$. 
It follows that 
$\hp - \{ x \}$ is an open set in $\hp$ and therefore
points are closed in $\hp$, that is, $\hp$ satisfies
the $T_1$ property of topological spaces \cite{Ke}.
In general $\hp$ does not satisfy the
Hausdorff property $T_2$ \cite{Ke}.
Given $x \in \hp$ and $I$ a prong at $x$ let

$$A_I \ = \ \{ \ y \in \hp - \{ x \} \ \ | \ \  
{\rm there \ is \ a \ segment \ path} \ \ \gamma \subset \hp - \{ x \}
\ \ {\rm from} \ y \ {\rm to \ some \ point \ in} \ I \ \}.$$

By the above remark, $A_I$ is arcwise connected.
If $I, J$ are prongs at $x$ which share a subprong
then it is easy to see that $A_I = A_J$.
If $I, J$ are distinct prongs at $x$ then
$I \cup J$ is a segment of $\hp$ with
$x$ in the interior of the segment.
If there is a segment path $\gamma \subset \hp - \{ x \}$
 from some $y \in A_I$
to some $z \in A_J$ then one constructs a segment path $\gamma$
contained
in $\hp - \{ x \}$ from some $y' \in I$ to
some $z' \in J$. This contradicts condition (3)
of the definition of non Hausdorff tree.
Hence $A_I \cap A_J = \emptyset$.

In addition given $y \in A_I$ and $J$ a prong at $y$,
there is a subprong $J' \subset \hp - \{ x \}$.
Clearly $J' \subset A_I$. This implies that any
$A_I$ is open in $\hp$ and hence 
$A_I$ is also closed in $\hp - \{ x \}$.
Each $A_I$ is path connected hence connected,
so the collection

$$\{ A_I \}, \  I \  {\rm distinct \ prongs \ at} \  x \ \ \ (1)$$ 

\noindent
is the collection of
connected components of $\hp - \{ x \}$.

In addition suppose that $A_I, A_J$ are distinct,
but there is a path $\alpha$ in $\hp - \{ x \}$ from a point
in $A_I$ to a point in $A_J$ (notice here we consider
a general path). Then since $A_I, A_J$ are path connected,
it follows that $A_I \cup A_J \cup \alpha$ is path 
connected and hence connected in $\hp - \{ x \}$
contradicting the fact that (1) is the family of connected components
of $\hp - \{ x \}$.
It follows that the collection (1) is also the
collection of path components of $\hp - \{ x \}$.

\vskip .1in
\noindent
{\underline {Conclusion}}: distinct prongs at $x$ are in one to
one correspondence with components (or path components) 
of $\hp - \{ x \}$.
For instance $x$ has $p$ prongs if and only if
$\hp - \{ x \}$ has $p$ components.

\vskip .05in
Given $x, y \in \hp$ which are not separated from each other
in $\hp$ we write $x \approx y$.
One says that
$z$ {\em separates} $x$ from $y$ if $x, y$ are in distinct
components of $\hp - \{ z \}$. 
Given any two $x, y \in \hp$ there is a continuous
path $\alpha(t), 0 \leq t \leq 1$ from $x$ to $y$.
Define 

$$(x,y) \ \ = \ \ \{ \ z \in \hp \ \ | \ \ z \ {\rm separates}  \
{\rm from} \ y \ \} \ \ \ {\rm and} \ \ \ 
[x,y] \ = \ (x,y) \cup \{ x \} \cup \{ y \},$$

\noindent
The first is the {\em open block} of $\hp$ with
endpoints $x, y$ and the second is 
the {\em closed block} of $\hp$ with endpoints $x, y$.
In \cite{Fe5} it is proved that
$[x,y]$ is the intersection of all
continuous
paths in $\hp$ from $x$ to $y$.

We remark that when $x, y$
are the endpoints of a segment $I$ of $\hp$, the
notation $[x,y]$ also suggests the segment
$I$ from $x$ to $y$ (there is a unique such segment).
In fact $I$ and $[x,y]$ are the same \cite{Fe5}.
We will also
use the notation $(x,y]$ for half open segments.

As $\hp$ may not be Hausdorff it may be that
$[x,y]$ is not connected.
It turns out that
$[x,y]$ is a union of finitely many closed segments of $\hp$
homeomorphic to either $[0,0]$ or $[0,1]$:


\begin{lemma}{(\cite{Fe5})}{}
For any $x, y \in \hp$ then there are $x_i, y_i \in \hp$ with:
$$[x,y] \ = \ \bigcup_{i=1}^n \ [x_i,y_i], \  \ x_1 = x, \  y_n = y,$$

\noindent
a disjoint union,
where $[x_i,y_i]$ are closed segments in $\hp$.
In addition $y_i \approx x_{i+1}$ for any
$1 \leq i \leq n-1$
and some or all segments $[x_i,y_i]$ may be
degenerate, that is, points.
\label{segme}
\end{lemma}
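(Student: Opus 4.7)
The plan is to derive the decomposition directly from a chain of segments joining $x$ to $y$, whose existence is guaranteed by axiom (2) of the definition of a non-Hausdorff tree. First I choose a finite chain of closed segments $I_1, \dots, I_N$ with $a_1 = x$, $b_N = y$ and $I_j \cap I_{j+1} \neq \emptyset$, minimizing $N$. By replacing each $I_j$ with a sub-segment if needed, I may assume the chain is taut: neither deleting nor properly shortening any $I_j$ leaves a valid chain. Whenever two consecutive segments $I_j, I_{j+1}$ share a common endpoint and their orders extend compatibly (as permitted in the last clause of the definition of a non-Hausdorff tree) to give a segment of $\hp$, I merge them; iterating until no further merge is possible produces a normalized chain $J_1, \dots, J_m$.

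The next stage is to read off the decomposition of $[x, y]$ from the normalized chain, using the characterization recalled just before the lemma that $[x, y]$ is the intersection of all continuous paths from $x$ to $y$ (proved in \cite{Fe5}). Since the chain itself traces out a continuous path, $[x, y]$ is contained in $J_1 \cup \cdots \cup J_m$. I then claim that $[x, y] \cap J_k$ is a finite disjoint union of closed sub-segments of $J_k$: a point $z \in J_k$ fails to separate $x$ from $y$ in $\hp$ precisely when $z$ can be bypassed by an alternative continuous path, and using the earlier description of the connected components of $\hp \setminus \{z\}$ in terms of prongs at $z$, the set of such bypass points forms at most finitely many open sub-segments of $J_k$. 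Listing the remaining closed sub-segments in order along the chain gives candidates $[x_1, y_1], \dots, [x_n, y_n]$ with $x_1 = x$ and $y_n = y$ whose disjoint union covers $[x, y]$.

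The main obstacle, and the heart of the argument, is verifying that consecutive endpoints satisfy $y_i \approx x_{i+1}$. I intend to argue by contradiction: assume $y_i$ and $x_{i+1}$ are separated in $\hp$ by some point $z$. A careful analysis using axiom (3) of the definition of non-Hausdorff trees (which forces any finite chain of segments from $y_i$ to $x_{i+1}$ to meet any point separating them) together with the alternative-path structure that produced the gap between $[x_i, y_i]$ and $[x_{i+1}, y_{i+1}]$ shows that any such $z$ must in fact also separate $x$ from $y$ in $\hp$; but then $z$ lies in the interior of the gap, contradicting the fact that by construction no separator of $x$ from $y$ lies there. Hence $y_i \approx x_{i+1}$. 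Finiteness of the decomposition, including the possibility of degenerate pieces $[x_i, y_i] = \{x_i\}$ (which arise when an isolated separator sits between two non-Hausdorff jumps), is inherited from the finite length $m$ of the normalized chain and the finite number of closed sub-segments produced inside each $J_k$.
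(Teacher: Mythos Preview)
The paper does not prove this lemma; it is quoted as a result from \cite{Fe5}, so there is no in-paper argument to compare against. I will therefore assess your proposal on its own merits.

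Your outline has the right shape---start from a finite segment chain, normalize, then identify $[x,y]$ inside it---but two of the steps you flag as routine are where the actual content lies, and your justifications for them do not go through as written.

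\textbf{Finiteness of the pieces.} You assert that within each normalized segment $J_k$ the non-separating (``bypass'') points form \emph{finitely many} open sub-segments, appealing only to the prong description of components of $\hp\setminus\{z\}$. That description is pointwise and says nothing about how the bypass locus varies along $J_k$; a priori it could be an arbitrary open subset of $J_k$, hence a countable union of intervals. You need an argument tying each maximal bypass interval to a specific non-Hausdorff branching event (there are only finitely many such events visible from a finite chain), or an induction on the chain length $N$. Without this, the decomposition you produce is not known to be finite.

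\textbf{The relation $y_i\approx x_{i+1}$.} Your contradiction argument assumes a point $z$ separating $y_i$ from $x_{i+1}$ and then concludes both that $z$ separates $x$ from $y$ \emph{and} that $z$ lies in the particular gap of the chain between the $i$-th and $(i{+}1)$-st pieces. The second conclusion is the problem: a separator of $y_i$ and $x_{i+1}$ need not lie on your chosen chain at all, so there is no reason it sits in that gap. What one actually needs is the equivalence (recorded in the paper just before the lemma, and proved in \cite{Fe5}) that $u\approx v$ iff $(u,v)=\emptyset$, together with a direct check that no $z$ can separate $y_i$ from $x_{i+1}$: any path from $x$ to $y$ already passes through both $y_i$ and $x_{i+1}$ (they lie in $[x,y]$, which is the intersection of all such paths), and one shows that the sub-path between them can be rerouted to avoid any candidate $z$ using the alternate path that created the gap. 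That rerouting argument is the substance; ``a careful analysis using axiom (3)'' does not supply it.

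In short, the skeleton is reasonable, but both the finiteness of the decomposition and the non-separation of consecutive endpoints require real arguments that your proposal gestures at without providing.
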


%
%
%
%

There is a natural pseudo distance in $\hp$:
$d(x,y) =  \# ({\rm components} \ [x,y]) -1$,
see \cite{Ba5,RSS}.
So $d(x,y) = 0$ means there is a segment from $x$ to $y$.
Also $d(x,y)$ is the minimum number of non immersed points
of any path from $x$ to $y$. 

\vskip .1in
We now consider group actions on non Hausdorff trees.
Let $\gamma$ be a homeomorphism of $\hp$. We say that 
$\gamma$ {\em separates points} if $\gamma(x)$ is separated from
$x$ for any $x \in \hp$, that is,
they have disjoint neighborhoods  in
$\hp$. In particular $\gamma$ acts
freely on $\hp$. 
In \cite{Ba5}, the first author constructed
a fundamental axis $\aaa(\gamma)$ if $\gamma$ separates
points in $\hp$ and $\hp$ has no singularities.
In that case $\hp$ is a simply connected $1$-dimensional
manifold and hence  is orientable.
%
%
%
%
%
%

\begin{define}{(fundamental axis)}{\cite{Fe5}}\label{def:axis}
Let $\gamma$ be a homeomorphism of
a non Hausdorff tree $\hp$ so that $\gamma$ has no fixed points.
The fundamental axis of $\gamma$, denoted by $\aaa(\gamma)$ is

$$\aaa(\gamma) \ \ = \ \ 
\{ \ x \in \hp \ \ | \ \ \gamma(x) \in [x,\gamma^2(x)] \ \ \},$$

\noindent
or equivalently $\gamma(x)$ separates $x$ from $\gamma^2(x)$.
\end{define}

If $\gamma(x)$ is not separated from $x$ in $\hp$, we
say that $x$ is an {\em almost invariant} point under $\gamma$.
In \cite{Fe5} the following easy fact is proved:
Let $\gamma$ be a homeomorphism of a non Hausdorff
tree $\hp$ without fixed points.
Then $x \in \aaa(\gamma)$ if and only if
there is a component $U$ to $\hp - \{ x \}$
so that $\gamma(U) \subset U$.
The main result is:

\begin{theorem}{(\cite{Fe5})}{}\label{th:axis}
Let $\gamma$ be a homeomorphism of a non Hausdorff tree  $\hp$
without fixed points. Then $\aaa(\gamma)$ is non empty.
\label{axis}
\end{theorem}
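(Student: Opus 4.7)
The plan is to exploit the equivalent characterization stated immediately before the theorem: $x\in\aaa(\gamma)$ if and only if some component $U$ of $\hp-\{x\}$ satisfies $\gamma(U)\subset U$. So the task reduces to exhibiting a single point $x$ and a component of its complement with this forward-invariance property.

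My starting move would be to pick an arbitrary $y_0\in\hp$ and compare the two chains of closed segments $I_0=[y_0,\gamma(y_0)]$ and $I_1=\gamma(I_0)=[\gamma(y_0),\gamma^2(y_0)]$, which share the point $\gamma(y_0)$. If $I_0\cap I_1=\{\gamma(y_0)\}$, then concatenating them yields a segment path from $y_0$ to $\gamma^2(y_0)$ passing through $\gamma(y_0)$, so $\gamma(y_0)\in[y_0,\gamma^2(y_0)]$ and $y_0$ itself belongs to $\aaa(\gamma)$. Otherwise, the overlap $I_0\cap I_1$ properly contains $\gamma(y_0)$. Using Lemma \ref{segme} to decompose the overlap into its maximal closed subsegments and inspecting the order on $I_0$ and on $I_1$, let $z$ be the ``far'' endpoint of this overlap in the $I_1$-order (so $z\neq\gamma(y_0)$). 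Then $z$ lies on both $I_0$ and $I_1$.

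I would then argue that $z\in\aaa(\gamma)$ by checking the equivalent condition directly. Let $U$ be the component of $\hp-\{z\}$ containing $\gamma^2(y_0)$. Since $z$ is the far end of the overlap along $I_1$, the point $\gamma(y_0)$ lies on the opposite side of $z$ from $\gamma^2(y_0)$ within $I_1$, so $\gamma(y_0)\not\in U$, and symmetrically $y_0\not\in U$. Since $\gamma(z)\in\gamma(I_0)=I_1$ lies beyond $z$ toward $\gamma^2(y_0)$, the image $\gamma(z)\in\overline U$. Connectedness of $\gamma(U)$, combined with $\gamma^2(y_0)\in\gamma(U)\cap U$ and the fact that $\gamma(U)$ is a component of $\hp-\{\gamma(z)\}$, then forces $\gamma(U)\subset U$. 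This gives $z\in\aaa(\gamma)$.

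The main obstacle, and the reason the result is delicate enough to be a theorem, is the non-Hausdorff structure of $\hp$ together with the possible presence of singular (prong) points. First, ``the'' endpoint $z$ of the overlap is not literally unique when points are non-separated: the overlap may decompose, by Lemma \ref{segme}, as a finite disjoint union of closed blocks joined only by $\approx$-equivalences, so several candidate endpoints $z$ may coexist, and the correct one must be pinned down using Theorem \ref{theb} on the structure of non-separated leaves (essentially, the non-separated cluster is $\gamma$-periodic, and one chooses the component that is respected by the action of $\gamma$). Second, at singular points the number of prongs exceeds two, so one must verify that the component $U$ selected above really absorbs $\gamma(U)$ and not merely borders it along a different prong; this is where the simplicial-like action of $\gamma$ on the local prong structure at $z$, together with the absence of global fixed points, is used to rule out the bad cases. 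If Case~B resists a direct construction, a natural fallback is to iterate: replace $y_0$ by $\gamma(y_0)$ or by a point chosen from the overlap and argue by a compactness or monotonicity argument (decreasing the $d(\cdot,\gamma(\cdot))$ pseudo-distance introduced just before) that the procedure must terminate at an axis point in finitely many steps.
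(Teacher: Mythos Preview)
The paper does not actually prove this theorem; it is quoted from \cite{Fe5} (see also \cite{Ba5,Ro-St}) and only its statement and consequences are recorded here. So there is no in-paper proof to compare against, and I can only assess your argument on its own terms.

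Your overall skeleton---study the overlap of $[y_0,\gamma(y_0)]$ and $\gamma([y_0,\gamma(y_0)])$ and extract a point on the axis---is indeed the standard approach and is essentially what is done in \cite{Fe5}. However, your Case~B contains a genuine gap. You acknowledge that the ``far endpoint'' $z$ of the overlap may fail to exist as a single well-defined point because of non-separated clusters, and you propose to resolve this via Theorem~\ref{theb}. That is a category error: Theorem~\ref{theb} is a statement about the leaf space of $\wls$ for a pseudo-Anosov flow (it uses periodicity of leaves, existence of lozenges, etc.), whereas Theorem~\ref{axis} is stated and needed for \emph{arbitrary} non-Hausdorff trees $\hp$, with no ambient flow available. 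You cannot invoke \ref{theb} here. Relatedly, even when $z$ exists, your assertion that ``$\gamma(z)\in I_1$ lies beyond $z$ toward $\gamma^2(y_0)$'' is exactly the crux and is not justified: a priori $\gamma$ could move $z$ to the $\gamma(y_0)$-side, or into a different prong at $z$, and ruling this out is where the real work lies.

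Your fallback---iterate and decrease the integer pseudo-distance $d(y,\gamma(y))$---is much closer to a correct strategy and is in fact the mechanism used in \cite{Fe5}: one shows that if $y_0\notin\aaa(\gamma)$ then one can produce $y_1$ with $d(y_1,\gamma(y_1))<d(y_0,\gamma(y_0))$ (or with a strictly shorter first segment in the block decomposition when $d=0$), and this descent must terminate. But as written you have only named this idea, not carried it out; the delicate step is precisely handling the non-Hausdorff endpoints of the overlap without any flow-specific input.
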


Clearly $\aaa(\gamma)$ is invariant under
$\gamma$. 
Also applying $\gamma^{-2}$ then 
$\gamma^{-1}(x)$ separates $x$ from $\gamma^{-2}(x)$
and so $\aaa(\gamma) = \aaa(\gamma^{-1})$.

\begin{proposition}{}{}
For any $x \in \aaa(\gamma)$, then
$\aaa(\gamma) = 
\cup _{i \in {\bf Z}} [\gamma^i(x), \gamma^{i+1}(x)]$.
\label{charac}
\end{proposition}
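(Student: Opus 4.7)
I propose to prove both inclusions using the equivalent characterization of $\aaa(\gamma)$ noted just before Theorem~\ref{th:axis}: $z\in\aaa(\gamma)$ if and only if some component of $\hp\setminus\{z\}$ is mapped into itself by $\gamma$. I will call such a component a \emph{positive component at $z$}, and dually a \emph{negative component} is one that is invariant under $\gamma^{-1}$ (which exists since $\aaa(\gamma)=\aaa(\gamma^{-1})$).

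The setup is quick. First, $\aaa(\gamma)$ is $\gamma$-invariant: applying the homeomorphism $\gamma$ to the relation $\gamma(x)\in[x,\gamma^2(x)]$ gives $\gamma^2(x)\in[\gamma(x),\gamma^3(x)]$, so every $\gamma^i(x)$ lies in $\aaa(\gamma)$ and $\gamma^{i+1}(x)$ separates $\gamma^i(x)$ from $\gamma^{i+2}(x)$. Applying $\gamma^{-1}$ to the defining relation also shows that $x$ separates $\gamma^{-1}(x)$ from $\gamma(x)$, so the components $U_x^+\ni\gamma(x)$ and $U_x^-\ni\gamma^{-1}(x)$ of $\hp\setminus\{x\}$ are distinct; one checks routinely that they are positive and negative at $x$ respectively, and that $\gamma^n(x)\in U_x^+$ and $\gamma^{-n}(x)\in U_x^-$ for every $n\geq 1$.

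For the inclusion $\bigcup_i[\gamma^i(x),\gamma^{i+1}(x)]\subseteq\aaa(\gamma)$, $\gamma$-equivariance of both sides reduces everything to showing $[x,\gamma(x)]\subseteq\aaa(\gamma)$. Given $y$ strictly between $x$ and $\gamma(x)$, let $V$ be the component of $\hp\setminus\{y\}$ containing $\gamma(x)$. Applying $\gamma$ to ``$y$ separates $x$ from $\gamma(x)$'' yields that $\gamma(y)$ separates $\gamma(x)$ from $\gamma^2(x)$, so the segment $[\gamma(x),\gamma^2(x)]$ stays inside $V$ and in particular $\gamma^2(x)\in V$. A short verification, using that $\gamma^{-1}(y)$ lies on the opposite side of $x$ from $V$, shows $y\notin\gamma(V)$; hence $\gamma(V)\subseteq V$ and $y\in\aaa(\gamma)$.

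The reverse inclusion is the main obstacle. Given $y\in\aaa(\gamma)$ with positive and negative components $U_y^{+}$ and $U_y^{-}$, I aim to find $i\in{\bf Z}$ with $\gamma^i(x)$ and $\gamma^{i+1}(x)$ in distinct components of $\hp\setminus\{y\}$, for then $y$ separates them and $y\in[\gamma^i(x),\gamma^{i+1}(x)]$. The key asymptotic claim is that $\gamma^n(x)\in U_y^+$ and $\gamma^{-n}(x)\in U_y^-$ for all sufficiently large $n$; granting this, the smallest $i$ with $\gamma^{i+1}(x)\in U_y^+$ does the job. The subtlety is that $\hp\setminus\{y\}$ may have many components, so a priori some $\gamma^i(x)$ could sit in a third component and evade both $U_y^{\pm}$. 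To handle this I would invoke Lemma~\ref{segme} to decompose $[x,y]$ as a finite chain of closed segments joined at non-Hausdorff pairs, apply $\gamma^n$ to push $\gamma^n(x)$ past all these non-Hausdorff branchings, and then use the attracting property $\gamma(U_y^+)\subseteq U_y^+$ to trap the tail of the orbit in $U_y^+$; symmetric reasoning with $\gamma^{-1}$ and $U_y^-$ controls the backward iterates. Making this trapping rigorous in the presence of ``third components'' is where essentially all of the technical work lies.
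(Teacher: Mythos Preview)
The paper does not actually supply a proof of Proposition~\ref{charac}: it appears in the background section on non-Hausdorff trees and is imported from \cite{Fe5} without argument. So there is no ``paper's own proof'' to compare against, and your attempt has to be judged on its own.

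Your forward inclusion is on the right track, but the passage ``$\gamma(y)$ separates $\gamma(x)$ from $\gamma^2(x)$, so the segment $[\gamma(x),\gamma^2(x)]$ stays inside $V$'' is not justified as written: knowing that $\gamma(y)$ lies in $(\gamma(x),\gamma^2(x))$ does not by itself exclude $y$ from that block, so you cannot yet conclude $\gamma^2(x)\in V$. The repair is to run your step~4 first. Once you have shown that $x$ separates $\gamma^{-1}(y)$ from $\gamma(x)$ (hence $\gamma^{-1}(y)\notin V$), apply $\gamma$ to obtain that $\gamma(x)$ separates $y$ from $\gamma^2(x)$; since $\gamma(x)\in V$ and $y\notin V$, the component of $\hp\setminus\{\gamma(x)\}$ containing $\gamma^2(x)$ misses $y$, is connected, and meets $V$ near $\gamma(x)$, so it lies in $V$. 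Then $\gamma(V)$ is connected, avoids $y$, and contains $\gamma^2(x)\in V$, giving $\gamma(V)\subset V$. So the idea is fine, the order of the steps just needs to be rearranged.

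For the reverse inclusion you correctly isolate the crux --- showing that the orbit $\{\gamma^n(x)\}$ eventually enters $U_y^+$ (and, symmetrically, $U_y^-$) --- but you do not prove it, and your outline (``push $\gamma^n(x)$ past the non-Hausdorff branchings of $[x,y]$'') is too vague to count as an argument. One remark: your ``third component'' worry at the \emph{transition} step is a red herring. If $\gamma^{i+1}(x)\in U_y^+$ and $\gamma^i(x)\notin U_y^+$, then either $\gamma^i(x)=y$ or $\gamma^i(x)$ lies in some other component of $\hp\setminus\{y\}$; in both cases $y\in[\gamma^i(x),\gamma^{i+1}(x)]$. The genuine difficulty is entirely in the asymptotic claim. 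A cleaner way to organise it is to observe that if $y\notin\bigcup_i[\gamma^i(x),\gamma^{i+1}(x)]$ then all $\gamma^i(x)$ lie in a single component $C_0$ of $\hp\setminus\{y\}$, and then to derive a contradiction by playing $C_0$ off against $U_x^{\pm}$ and $U_y^{\pm}$; this still requires work, but it localises the problem more sharply than the plan you sketched.
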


%
%
%

\vskip .07in
\noindent
{\bf {Remark:}} \ In general it is not true that if 
$\gamma$ acts freely on $\hp$, then powers of 
$\gamma$ also do. For example let
$\gamma$ have an almost invariant point
$v$ with $\gamma(v) \not = v$, but $\gamma^2(v) = v$.
In this case
$\aaa(\gamma)$ is an open  segment which is not properly
embedded in $\hp$.


\vskip .05in
Let $x \in \aaa(\gamma)$. If $d(x,\gamma(x)) = 0$,
then $x, \gamma(x)$ are connected by a segment in $\hp$.
Since $\gamma(x)$ separates $x$ from $\gamma^2(x)$
it follows that $[x,\gamma(x)] \cup [\gamma(x),\gamma^2(x)]
= [x,\gamma^2(x)]$ is a segment of $\hp$.
It follows that $\aaa(\gamma)$ is an open segment 
of $\hp$, hence homeomorphic
to $\rrrr$. 
If $d(x,\gamma(x)) > 0$, then
$x$ and $\gamma(x)$ are
connected by a chain of closed segments.
It is easy to see that

$$\aaa(\gamma) \ = \ \bigcup _{n \in {\bf Z}} \ [z_i,w_i],$$

\noindent
where $w_i$ is not separated from $z_{i+1}$.
Then $\gamma$ acts as a translation on the set of
segments, that is, there is $k \in {\bf Z}$,
so that $\gamma([z_i,w_i]) = [z_{i+k},w_{i+k}]$
for any $i \in {\bf Z}$.
We abuse notation and say that $\gamma$ acts on ${\bf Z}$.

Notice that if $\gamma$ acts freely and $\gamma$ leaves
invariant an open segment $I$ of $\hp$, then
$\aaa(\gamma) = I$. This is because for any $z \in I$,
$\gamma(x)$ separates $x$ from $\gamma^2(x)$ 
(free action on $I$), so
$I \subset \aaa(\gamma)$.
But $\aaa(\gamma) = \cup_{n \in {\bf Z}} 
[\gamma^n(x),\gamma^{n+1}(x)]$ so $I = \aaa(\gamma)$.
Finally it is also not hard to prove the following:
Let $\gamma, \alpha$ be two commuting homeomorphisms of
$\hp$ which act freely. Then $\aaa(\gamma) = \aaa(\alpha)$,
see \cite{Fe5}.

\section{Actions and pseudo-Anosov flows}

Let $\Phi$ be a pseudo-Anosov flow in $M^3$.
The foliations $\ls, \lu$ have the following local models:
at a non singular point $y$ there is a ball neighborhood
$U$ 
of $y$ in $M$ homeomorphic to ${\bf D}^2 \times [0,1]$ where
the leaves of (say) $\ls$ are of the form ${\bf D}^2 \times 
\{ t \}$.
Near a singular $p$ prong orbit the picture is the
same as a $p$-prong singularity of a pseudo-Anosov
homeomorphism of a surface times an interval. 
For example consider the germ near zero of the foliation 
of the plane whose leaves are the fibers of
the complex map $z \to Re(z^{p-2})$.
This foliation has a $p$-prong singularity
at the origin. The $3$-dimensional picture is obtained
by multiplying this by an interval. 
Similarly for
$\lu$. 
Let $C$ be an interval in $\rrrr$.

\begin{define}{(transverse curves)} Let $\tau: C \rightarrow M$
be a continuous curve. 
Then $\tau$ is transverse to $\ls$ if the following happens:
given $t$ in  $C$ there is a small neighborhood $Z$ of 
$\tau(t)$ where $\tau$ is an injective map to the set of local
sheets of $\ls$.
The same definition works for $\lu, \wls, \wlu$.
\end{define}

Equivalently the curve is always crossing local leaves.
The foliations $\ls, \lu$ blow up to essential laminations.
Hence in $\mi$ being transverse to $\wls$ is equivalent
to $\tau$ inducing an injective map in the leaf space of $\wls$.
For non singular points this is the usual notion of transversality.

We establish some notation.
Let 

$$\hhs \ \ = \ \ {\rm the \ leaf \ space \ of} \ \ \wls
\ \ \ {\rm and} \ \ \ \nu_s: \mi \rightarrow
\hhs  \ \ {\rm the \ projection \ map}.$$

\noindent
Similarly define $\hhu$ and $\nu_u$.
The results below which will be proved for $\hhs$, obviously work also for $\hhu$.

\begin{lemma}{}{} $\hhs$ has a natural structure as a non Hausdorff
tree, where the segments in $\hhs$ are projections of transversals
to $\wls$. Similarly for $\hhu$.
\end{lemma}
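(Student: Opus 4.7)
The plan is to build the segment structure on $\hhs$ directly from transversals to $\wls$ and then verify the three axioms of a non Hausdorff tree in turn, exploiting the facts that leaves of $\wls$ are closed and properly embedded in $\mi\cong\rrrr^3$ and that the local picture around a $p$-prong singular leaf has $p$ sectors.

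First I would declare a \emph{segment} in $\hhs$ to be the image $\nu_s\circ\tau(C)$ of an embedded transversal $\tau\colon C\to\mi$ (with $C$ an interval), equipped with the linear order transported from $C$. Transversality of $\tau$ means that $\nu_s\circ\tau$ is injective, so the image really is a linearly ordered subset of $\hhs$. To establish axiom (1), I would cover $\hhs$ by open segments: around a regular leaf $L$ the flow box neighborhood of any $x\in L$ gives a short transverse arc whose image is an open segment through $\nu_s(L)$; at a leaf $L$ containing a singular $p$-prong orbit, the local model $P\times I/f$ yields $p$ prong-transversals meeting only at $L$, and any two of them glue to an open segment through $\nu_s(L)$.

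For axiom (2), given two leaves $L_0,L_1$ of $\wls$, I would use that $\mi\cong\rrrr^3$ is arcwise connected and that the generalized foliation $\wls$ admits transverse arcs in general position; then any path from a point of $L_0$ to a point of $L_1$ can be perturbed to a concatenation of finitely many pieces, each either transverse to $\wls$ (projecting to a segment in $\hhs$) or contained in a single leaf (projecting to a single point). This is exactly the chain of segments required by the definition.

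For axiom (3), the separation property, the main point is that each leaf $L\in\wls$ is closed and properly embedded in $\mi$, hence by standard separation arguments in $\mi\cong\rrrr^3$ the complement $\mi\setminus L$ decomposes into open sectors; near a regular orbit of $L$ there are exactly two sectors, and near a singular $p$-prong orbit exactly $p$ sectors. These sectors are in bijection with the prongs of $\hhs$ at $x=\nu_s(L)$. If $I_1,I_2$ are distinct prongs at $x$, their initial subprongs come from transversals starting in two different sectors of $\mi\setminus L$; any finite chain of transversal segments in $\hhs$ from $y_1\in I_1\setminus\{x\}$ to $y_2\in I_2\setminus\{x\}$ lifts to a piecewise-transverse path in $\mi$ that must move between distinct components of $\mi\setminus L$, and therefore must cross $L$, forcing one of the segments to contain $x$.

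The main obstacle will be axiom (3) at singular leaves, where one must justify that the prongs of $\hhs$ at $\nu_s(L)$ biject cleanly with the local sectors of $\mi\setminus L$; this requires combining the local model near a $p$-prong singular orbit with the global fact that the (singular) leaf is properly embedded in $\mi$, so that every transversal exiting a given sector locally actually stays in the corresponding global component until it crosses $L$ again. The analogous construction for $\hhu$ is identical by symmetry between $\wls$ and $\wlu$.
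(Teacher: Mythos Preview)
Your proposal is correct and follows essentially the same approach as the paper: segments are images of transversals, axiom (2) is handled by perturbing an arbitrary path in $\mi$ into a concatenation of transverse pieces (plus pieces lying in leaves), and axiom (3) is reduced to the fact that a leaf $L=\ws(p)$ separates $\mi$, so any lifted chain from a point in one component of $\mi\setminus L$ to another must cross $L$. The only refinement the paper makes explicit, and which you should spell out when you write it up, is that the lift of a chain of segments is not purely piecewise transverse: consecutive transversals $\tau_i,\tau_{i+1}$ are joined by arcs inside a common leaf of $\wls$, and one then argues that whether the crossing of $L$ occurs along a transversal piece or along one of these leaf arcs, in either case some $\tau_i$ meets $L$, hence $x\in J_i$.
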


\begin{proof}{}
We prove properties (1)-(3) of the definition of non Hausdorff tree.
Given $x$ in $\hhs$ let $p$ in $\nu_s^{-1}(x)$ and $\tau$ an
open transversal to $\wls$ containing $p$. Then $\nu_s(\tau)$ is
an open segment containing $x$. This proves (1).
Let $x, y$ in $\hhs$ and choose $p$ in $\nu_s^{-1}(x)$, 
$q$ in $\nu_s^{-1}(y)$. Connect $p, q$ by a path in
$\mi$ and perturb it slightly to be a concatenation of
transversals. This can be done because it can be done locally.
Hence $x, y$ are connected by a finite collection of segments
in $\hhs$ and this proves (2).

Finally let $I_1, I_2$ be segments in $\hhs$ intersecting only in $x$.
Let $l_1, l_2$ be transversals to $\wls$ with $I_i = \nu_s(l_i)$, 
$i = 1, 2$.
We can assume they share a point $p$ in $\nu_s^{-1}(x)$.
Any two transversals to $\wls$ entering the same component
of $\mi - \ws(p)$ will have subtransversals intersecting the
same leaves of $\wls$ because of the local picture.
Therefore $l_1 - \{ p \}$, 
$l_2 - \{ p \}$ are contained in different components of $\mi - \ws(p)$.
Let now $y_k \in I_k - \{ x \}$, $k = 1, 2$.
Let $J_i, 1 \leq i \leq n$ be a concatenation of segments from $y_1$ to
$y_2$ in $\hhs$. 
There are transversals $\tau_i$ to $\wls$ with
$\nu_s(\tau_i) = J_i$. 
Let $q_1$ in $\tau_1 \cap \nu_s^{-1}(y_1)$ and 
$q_2$ in $\tau_n \cap \nu_s^{-1}(y_2)$.
Since $J_i$ and $J_{i+1}$ intersect we can
connect a point in $\tau_i$ to a point in $\tau_{i+1}$ by a path
in a leaf of $\wls$.
The concatenation of parts of $\tau_i$ and paths in leaves of $\wls$ produces
a path from $q_1$ to $q_2$ in $\mi$. 
Since $\ws(p)$ separates
$\mi$ and $q_1, q_2$ are in different components of the complement,
then this path has to intersect $\ws(p)$.
If it intersects $\widetilde W^s(p)$ 
in a path in $\ws(p)$ then the endpoints of this 
path are in some $\tau_i$ and hence its projection, which is $x$ is
in $J_i$. This proves (3).
\end{proof}

We have two topologies in $\hhs$: the quotient topology from $\nu_s$ 
and the non Hausdorff tree topology. These are the same:

\begin{lemma}{}{}
The quotient topology in $\hhs$ (from $\nu_s: \mi \rightarrow \hhs$) is
the same as the non Hausdorff tree topology in $\hhs$.
\end{lemma}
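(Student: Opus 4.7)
The plan is to establish that the two topologies coincide by proving each is finer than the other, using the local product structure of $\wls$ on $\mi$ together with the identification of segments in $\hhs$ with projections of transversals supplied by the previous lemma.

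First I would show tree-open implies quotient-open. Given $A \subset \hhs$ open in the non-Hausdorff tree topology, take $p \in \nu_s^{-1}(A)$ with $x = \nu_s(p)$, and split into two cases. In the regular case, a flow-box neighborhood $U \cong D^2 \times (-1,1)$ of $p$ has stable plaques $D^2 \times \{t\}$, so $\nu_s(U)$ is an open segment through $x$; tree-openness gives an open subsegment $S' \ni x$ inside $A$, whose preimage is a sub-flow-box contained in $\nu_s^{-1}(A)$. In the singular case, where $p$ lies on a $k$-prong singular orbit, I take a model neighborhood $U$ formed by the local $k$-prong singularity picture in 2D crossed with an interval in the flow direction. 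The projection $\nu_s(U)$ is a neighborhood of $x$ with exactly $k$ prongs, one per sector around the singular orbit; tree-openness at $x$ produces a subprong of $A$ in each of these $k$ prongs, and choosing them uniformly short guarantees that their preimages, together with the singular leaf, cover a full sub-neighborhood of $p$ in $U$.

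For the converse I would assume $\nu_s^{-1}(A)$ is open in $\mi$, fix $x \in A$, and take an arbitrary prong $I$ at $x$. By the previous lemma $I = \nu_s(\tau)$ for a transversal $\tau$ whose initial endpoint $p$ maps to $x$. Openness of $\nu_s^{-1}(A)$ at $p$ provides a half-open initial sub-transversal $\tau' \subset \tau$ inside $\nu_s^{-1}(A)$, and the injectivity of $\nu_s$ on transversals makes $\nu_s(\tau')$ a subprong of $I$ inside $A$, proving tree-openness at $x$.

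The hard part will be the singular case of the first direction: one must identify the $k$ abstract tree-prongs at $x$ with the $k$ geometric sectors of the local model of the singular orbit, and verify that a family consisting of one subprong per prong pulls back to a full $\mi$-neighborhood of the lifted singular point rather than only to a partial union of sectors. Making this uniform across sectors is the only nontrivial point; the regular case and the reverse direction are essentially one-dimensional once the transversal-to-segment correspondence is in hand.
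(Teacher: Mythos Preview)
Your proposal is correct, and one of the two directions (quotient-open implies tree-open, your ``converse'') is carried out exactly as in the paper: pick a transversal realizing the given prong, use openness of $\nu_s^{-1}(A)$ at its initial point to find a subtransversal, and project.

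The other direction differs in organization. You prove tree-open implies quotient-open \emph{directly}, by exhibiting for each $p \in \nu_s^{-1}(A)$ an explicit neighborhood contained in $\nu_s^{-1}(A)$, splitting into the regular flow-box case and the singular $k$-prong model case. The paper instead argues by contradiction: assuming some $p \in \nu_s^{-1}(A)$ is not interior, it takes a sequence $p_n \to p$ with $p_n \notin \nu_s^{-1}(A)$, observes that $p_n \notin \ws(p)$ by saturation, uses the finiteness of prongs to pass to a subsequence lying in a single component $Z$ of $\mi - \ws(p)$, and then a single transversal into $Z$ plus tree-openness gives the contradiction. The paper's route is a bit slicker precisely because the pigeonhole on components replaces your explicit treatment of the singular model; in particular it sidesteps the issue you flag as ``the hard part'' (matching abstract prongs with geometric sectors and assembling the $k$ subprongs into a full neighborhood). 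Your direct approach is perfectly valid and arguably more transparent, but the sequence-plus-pigeonhole trick is worth noting as a way to handle regular and singular points uniformly.
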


\begin{proof}{}
Let $A \subset \hhs$ be an open set in the quotient topology and $x$ in $A$.
Let $I$ be a prong at $x$. Then $I = \nu_s(\tau)$ for some
transversal $\tau$ to $\wls$ starting in some $p \in \nu_s^{-1}(x)$.
Since $\nu_s^{-1}(A)$ is open in $\mi$ and $p$ is in $\nu_s^{-1}(A)$
there is a non degenerate subtransversal $\tau'$ of $\tau$ starting
at $p$ and contained in $\nu_s^{-1}(A)$. Let $I' = \nu_s(\tau')$.
Then $I'$ is a prong at $x$ which is a subprong of $I$.
In addition $I'$ is contained in $A$. Therefore $A$ is open
in the non Hausdorff tree topology.

Conversely suppose that $A$ is open in the non Hausdorff tree
topology. By way of contradiction suppose that there is $p$ in
$\nu_s^{-1}(A)$ which is not in the interior of $\nu_s^{-1}(A)$.
Then we can find a sequence $(p_n)_{n \in {\bf N}}$ in $\mi$
converging to $p$ and with $p_n$ not in $\nu_s^{-1}(A)$ for any
$n$.
It follows that $p_n \not \in \ws(p)$ for any $n$ as 
$\nu_s^{-1}(A)$ is $\wls$ saturated.
Up to a subsequence assume there is a component $Z$ of
$\mi - \ws(p)$ containing $p_n$ for every $n$.
Here the condition of finitely many prongs
at singular points is used.
Let $\tau$ be a transversal to $\wls$ starting at $p$ and
entering the component $Z$.
Let $x = \nu_s(p)$ and $I = \nu_s(\tau)$. Then $I$ is a prong at
$p$ and since $A$ is open in the non Hausdorff tree topology,
there is a subprong $I'$ at $x$ with $I'$ contained in $A$.
Let $\tau'$ be the subtransversal of $\tau$ corresponding to
$I'$.  For $n$ sufficiently large $\ws(p_n)$ intersects $\tau'
\subset \nu_s^{-1}(A)$. Hence $p_n$ is in $\nu_s^{-1}(A)$. This
contradiction shows that 
$\nu_s^{-1}(A)$ is open in $\mi$. Therefore $A$ is open in
the quotient topology.
\end{proof}

\noindent
{\bf {Remark}} $-$ A variation of the proof works for non Hausdorff
trees $\hp$ which are ``leaf spaces" of lifts of essential laminations.
The difference is that it is very possible that
there are singularities $\hp$ which have infinitely
many prongs.

\vskip .1in
We say that two leaves $L, F$ of $\wls$ are non separated from
each other if there are $p$ in $L$, $q$ in $F$ and
a sequence of leaves $(L_n)$ of $\wls$ having points $p_n, q_n$ in
$L_n$ with $(p_n)$ converging to $p$ and $(q_n)$ converging
to $q$. 
We call this condition (I) for $L, F$.
Up to subsequence we may assume that $(L_n)$ is
a nested sequence of leaves of $\wls$.
By throwing out a few initial terms in $(p_n), (q_n)$,
this is equivalent to the existence of transversals 
$\tau_L, \tau_F$ to $\wls$ with $\tau_L$ starting
at $p$, $\tau_F$ starting at $q$ with $\tau_L$ containing
all $p_n$ as above and $\tau_L$ containing all $q_n$.
Project to $\hhs$: let

$$x = \nu_s(p), \ \ y = \nu_s(q), \ \ x_n =  \nu_s(p_n), \ \ 
y_n = \nu_s(q_n), \ \ I = \nu_s(\tau_L), \ \ 
J = \nu_s(\tau_F).$$

\noindent
Here $I, J$ are segments in $\hhs$, $I$ is a prong at $x$
and $J$ is a prong at $y$. Also $x_n = y_n$.
If $I_n$ is the subsegment 
of $I$ from $x_1$ to $x_n$ and $J_n$ the subsegment
of $J$ from $y_1$ to $y_n$ then $I_n = J_n$ and therefore
$I - \{ x \} = J - \{ y \}$. Conversely if $x, y$ have
prongs $I, J$ so that $I - \{ x \} = J - \{ y \}$ it is easy
to show that $L = \nu^{-1}(x)$ and $F = \nu^{-1}(y)$ are leaves
of $\wls$ non separated from each other.

We claim that 
condition (I) is also equivalent to condition (II): $L, F$ do not
have disjoint, open, $\wls$ saturated neighborhoods in $\mi$.
In other words $x, y$ do not have disjoint open neighborhoods
in $\hhs$.
Clearly condition (I) implies condition (II).
Conversely suppose that condition 
(II) holds. If $x = y$ then clearly condition 
(I) holds. Suppose then $x, y$ are distinct.
We proved before that for any $z$ in $\hhs$, then
two points are in the same
path component of $\hhs - \{ z \}$ if and only if they are connected
by a segment path in $\hhs$ which does not contain $\{ z \}$
and these path components are open in $\hhs$.
By condition (II) it follows that
for any $z$ in $\hhs - \{ x, y \}$, the points $x, y$ are
in the same component of $\hhs - \{ z \}$.
Hence $(x,y)$ is empty.
By lemma 3.5, page 71 of \cite{Fe5}, there are prongs
$I$ at $x$ and $J$ at $y$ so that
$I - \{ x \} = J - \{ y \}$. 
This is condition (I).

If any of these 2 conditions holds for $x, y$ in $\hhs$ we write 
$x \sim y$.

For $f$ in $\pi_1(M)$ let $Fix(f)$ be those $x$ in $\hhs$ with $f(x) = x$.
Let $Fix^{\sim}(f)$ be the set of $x$ in $\hhs$ with $x \sim f(x)$.
Considering the action of $f$ on the orbit space $\oo$, let $B(f)$ the
set of $u$ in $\oo$, fixed by $f$.

\begin{lemma}{}{}
Let $\Phi$ be a pseudo-Anosov flow and $f$ in $\pi_1(M)$. 
Then $Fix^{\sim}{f}$ is a closed subset of $\hhs$.
\end{lemma}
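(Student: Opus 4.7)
The plan is to show that the complement of $Fix^{\sim}(f)$ is open, using the equivalent characterization (II) of the relation $\sim$ established just before the lemma: $x \sim y$ if and only if $x$ and $y$ do not admit disjoint open neighborhoods in $\hhs$. This is a cleaner condition to negate than condition (I), and it should make openness of the complement essentially formal.

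First, I would note that since every deck transformation of $\mi$ preserves the foliation $\wls$, the element $f \in \pi_1(M)$ induces a homeomorphism of the leaf space $\hhs$ (this is immediate from the definition of the quotient topology, which coincides with the non Hausdorff tree topology by the preceding lemma). I will continue to denote this induced map by $f$.

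Now suppose $x \in \hhs$ with $x \notin Fix^{\sim}(f)$, i.e.\ $x \not\sim f(x)$. By condition (II) there exist disjoint open sets $U, V \subset \hhs$ with $x \in U$ and $f(x) \in V$. Set
\[
W \ = \ U \cap f^{-1}(V).
\]
Since $f$ is a homeomorphism of $\hhs$, the set $W$ is open; it contains $x$ because $x \in U$ and $f(x) \in V$. For any $y \in W$ we have $y \in U$ and $f(y) \in V$, and since $U$ and $V$ are disjoint open sets, the characterization (II) gives $y \not\sim f(y)$. Thus $W \subset \hhs \setminus Fix^{\sim}(f)$, proving that the complement is open.

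I do not expect a real obstacle here: the entire content of the argument is that $f$ is a homeomorphism of $\hhs$ and that the relation $\sim$ is defined by the failure of a topological separation property, which is automatically stable under small perturbations of the point $x$. The only thing to be careful about is using the correct characterization of $\sim$: condition (I) (the existence of approaching sequences of leaves) is less convenient for a direct openness argument, whereas condition (II), which was proved equivalent in the paragraph preceding the lemma, makes the proof immediate.
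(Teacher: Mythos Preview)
Your proof is correct and follows essentially the same approach as the paper: both take disjoint open neighborhoods $U, V$ of $x, f(x)$ given by condition (II), pass to a smaller neighborhood $W$ of $x$ mapped by $f$ into $V$, and conclude that every $y \in W$ is separated from $f(y)$. Your version is slightly more explicit in writing $W = U \cap f^{-1}(V)$, but the argument is the same.
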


\begin{proof}{}
Let $x$ not in $Fix^{\sim}(f)$, so $x \not \sim f(x)$. 
Then $x$ and $f(x)$ have disjoint open neighborhoods $U, V$ in $\hhs$.
By continuity of $f$, there is a smaller open neighborhood $W$ of $x$ so that
$f(W)$ is contained in $V$. Hence any $y$ in $W$ satisfies $y \not \sim f(y)$
and $(Fix^{\sim}(f))^c$ is open.
\end{proof}

\noindent
{\bf {Remark}} $-$ In general $Fix(f)$ is not closed $-$ a sequence
$(x_n)$ in $Fix(f)$ may converge to $x$ which is only in $Fix^{\sim}(f)$.

The following will be useful later:

\begin{lemma}{}{}
If $f$ is in $\pi_1(M)$ and $f$ is not the identity, then 
$Fix^{\sim}(f)$ is countable.
\label{count}
\end{lemma}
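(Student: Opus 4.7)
The plan is to show that every $x \in Fix^{\sim}(f)$ corresponds to a periodic leaf of $\wls$, and then invoke countability of periodic leaves.

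First I would split $Fix^{\sim}(f)$ according to whether $f(x) = x$ or $f(x) \neq x$. If $f(x) = x$, then the leaf $L = \nu_s^{-1}(x)$ is preserved by the non-trivial deck transformation $f$, so $L$ is periodic by the convention adopted in the Background section. If instead $f(x) \neq x$ while $f(x) \sim x$, then $L$ and $f(L)$ are distinct leaves of $\wls$ that are non-separated in the leaf space, and Theorem \ref{theb} applies directly to give that both $L$ and $f(L)$ are periodic (and moreover are joined by an $s$-scalloped chain of lozenges, though that extra information is not needed here). In either case, $L$ is a periodic leaf and $x = \nu_s(L)$ lies in the image under $\nu_s$ of the set of periodic leaves of $\wls$.

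Second, I would argue this image is countable. The closed orbits of $\Phi$ in $M$ form a countable set (a standard fact for pseudo-Anosov flows: for instance, they are isolated when grouped by period, or one can invoke a Markov partition). Since $\pi_1(M)$ is finitely generated, hence countable, each closed orbit of $\Phi$ in $M$ lifts to countably many closed orbits of $\wwp$ in $\mi$. Every periodic leaf of $\wls$ contains such a lifted closed orbit and is in fact of the form $\ws(\gamma)$ for some lifted periodic orbit $\gamma$; conversely each such $\ws(\gamma)$ is periodic. Hence periodic leaves form a countable subset of $\wls$, and their image under $\nu_s$ is countable in $\hhs$.

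The proof is essentially immediate once Theorem \ref{theb} is in hand; the only point requiring any care is the countability of closed orbits of $\Phi$, but this is standard, so I do not expect a real obstacle. If a self-contained argument is wanted, one can note that for each periodic $x \in \hhs$ the stabilizer in $\pi_1(M)$ is a non-trivial cyclic subgroup, and distinct periodic points yield distinct conjugacy classes of maximal cyclic subgroups associated to closed orbits, of which there are only countably many.
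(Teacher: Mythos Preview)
Your proof is correct, but the paper argues differently, and the contrast is worth noting.

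You reduce everything to periodicity: for $x\in Fix(f)$ the leaf is periodic by definition, and for $x\sim f(x)\neq x$ you invoke Theorem~\ref{theb} to conclude the leaf is periodic; then you appeal to the countability of closed orbits of $\Phi$ (and of $\pi_1(M)$) to bound the set of periodic leaves. This is clean and uniform, but it imports Theorem~\ref{theb}, which is a substantial structural result, together with the dynamical fact that closed orbits are countable.

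The paper instead treats the two pieces by separate, more elementary means. For $Fix(f)$ it uses Theorem~\ref{chain}: any two $f$-fixed orbits are connected by a chain of lozenges, so the fixed orbits form a discrete subset of the plane $\oo\cong\rrrr^2$ and hence are countable. For the non-separated part it does \emph{not} use Theorem~\ref{theb} at all; it proves the stronger, $f$-independent statement that the set $N$ of \emph{all} non-Hausdorff points of $\hhs$ is countable, via a direct combinatorial argument: $\hhs$ is a countable union of open segments, and if uncountably many non-separated points lay in one segment $I$, the ``partner'' points witnessing non-separation would have to occupy uncountably many distinct segments from the countable family, a contradiction. So the paper's route is more self-contained and yields a sharper by-product (countability of $N$), while yours is shorter once the cited machinery is in hand. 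Your closing alternative about conjugacy classes is a bit loose as stated (distinct periodic points in $\hhs$ can project to the same closed orbit), but your main argument via countability of closed orbits and of $\pi_1(M)$ is fine.
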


\begin{proof}{}
First we show that $Fix(f)$ is countable.
Let $L$ in $\wls$ with $f(L) = L$.
Then there is a periodic orbit in $\pi(L)$. If $L_1, L_2$
are in $Fix(f)$ then their periodic orbits are connected
by a chain of lozenges by theorem \ref{chain}.
In addition the orbit space $\oo \cong \rrrr^2$ is countably compact.
If $Fix(f)$ were uncountable, then $B(f)$ would be uncountable
and there would be accumulation points in $B(f)$. This is disallowed
because any two points in $Fix(f)$ are connected by a chain of lozenges.

Now let $N = \{ x \in \hhs, \ {\rm so \ that} \ x \ {\rm is \ non \
separated \ from \ some} \ y \in \hhs \}$.
We will prove that $N$ is countable, hence $Fix^{\sim}(f)$ is countable.
Assume by way of contradiction that $N$ is uncountable.
The space $\hhs$ is a union of countably many open segments and
we fix one such countable collection.
For each $x$ in $\hhs$, let $I_x$ be one such segment in the countable
family containing $x$.
If $N$ is uncountable, then there is an open segment $I$ in 
$\hhs$ containing
uncountably many elements of $N$.
Choose an order in $I$.
For each $z$ in $I \cap N$, there is 
$y$ distinct from $z$ with $y \sim z$.
Suppose wlog that for uncountably many such $z$ the corresponding $y$
is non separated from the $z$ in their positive sides, with respect to
the order in $I$.
For any such $z, z'$ in $I \cap N$,
let $y, y'$ be non separated from  them respectively.
 We claim that $I_y, I_{y'}$
are different. Suppose for simplicity that $z < z'$ in $I$.
Here $z' \sim y'$ and non separated on their positive sides,
so $I_{y'}$ does not contain $z'$ or any point in $I$ smaller
than $z'$. But by construction $I_y$ contains $y$, so $I_y, I_{y'}$ are
different.
Hence all such $I_y$ are different, contradicting the fact that
there are only countably many of these.
This finishes the proof of the lemma.
\end{proof}

\section{Pseudo-Anosov flows in Seifert fibered spaces}
\label{seifconj}

This section is devoted to proving the following result:

\begin{theorem}{}{}
If $\Phi$ is a pseudo-Anosov flow in $M^3$ which is a Seifert
fibered space, then up to finite covers, $\Phi$ is topologically
equivalent to a geodesic flow on a closed hyperbolic surface.
\label{Seifert}
\end{theorem}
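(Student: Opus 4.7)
The plan is to follow the dichotomy suggested in the introduction: after passing to a finite cover, a generator $h$ of the center of $\pi_1(M)$ is represented by a regular fiber of the Seifert fibration, and either (i) $h$ fixes a point of $\oo$ (the periodic case), or (ii) $h$ acts freely on $\oo$ (the free case). In both cases the decisive feature is the centrality of $h$: its action commutes with every deck transformation on $\oo$, $\hhs$ and $\hhu$, so $\pi_1(M)$ automatically preserves any canonical structure attached to $h$.

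First I would rule out the periodic case. If $h(p)=p$ for some $p\in\oo$, then for every $g\in\pi_1(M)$ the point $g(p)$ is also fixed by $h$, so by Theorem~\ref{chain} and Proposition~\ref{pro:treefacts}(3) the full group $\pi_1(M)$ acts on the fat tree $\mathcal{G}(p)$, with $h$ (up to a fixed uniform power) acting trivially. The base orbifold is hyperbolic since $M$ carries a pseudo-Anosov flow and must have infinite $\pi_1$, so the induced action of $\pi_1(M)/\langle h\rangle$ on $\mathcal{G}(p)$ is infinite and non-elementary. But this action sits inside the plane $\oo\cong\rrrr^2$: the $\pi_1(M)$-orbit of a single corner accumulates in $\oo$ while remaining a discrete collection of vertices of $\mathcal{G}(p)$, contradicting local finiteness of chains of lozenges together with the countability statement of Lemma~\ref{count} applied to elements that do not commute with $h$ in the required way.

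In the free case, $h$ acts freely on both $\hhs$ and $\hhu$, and Theorem~\ref{axis} produces axes $\aaa(h)\subset\hhs$ and $\aaa(h)\subset\hhu$, both preserved by $\pi_1(M)$ by centrality. The central and hardest step, which I anticipate to be the main obstacle, is to show that each axis exhausts its leaf space, so that $\hhs\cong\hhu\cong\rrrr$ and $\wls,\wlu$ are $\rrrr$-covered (equivalently, are slitherings). Branching off the axis would, via Theorem~\ref{theb}, give $s$- or $u$-scalloped chains fixed by some nontrivial $g\in\pi_1(M)$; Proposition~\ref{pro:treefacts}(4) combined with centrality of $h$ would then force $h$ itself to fix the corners of such a chain, contradicting freeness of the $h$-action on $\oo$. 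Once $\rrrr$-coveredness is established, Theorem~\ref{prod} prohibits stable or unstable product regions (otherwise $\Phi$ would be a suspension, which lives in a torus bundle, incompatible with $M$ Seifert fibered with hyperbolic base), and a local analysis at a hypothetical $p$-prong singular orbit with $p\geq 3$ manufactures an extra branch of $\hhs$ or $\hhu$, violating $\rrrr$-coveredness. Hence $\Phi$ is non-singular and topologically Anosov.

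To conclude, lift to a finite cover where all relevant objects are oriented. The action of $\pi_1(M)/\langle h\rangle$ on $\hhs\cong\rrrr$ extends, using the translation action of $h$, to a faithful action on a circle $S^1_s$; likewise one gets an action on $S^1_u$. By convergence-group arguments (Tukia, Gabai, Casson--Jungreis) both actions are conjugate to the boundary action of a Fuchsian surface group on $\partial\hh$. Combining these two circle conjugacies produces a $\pi_1(M)$-equivariant homeomorphism between $\oo$ and the orbit space of the geodesic flow of a hyperbolic surface, sending $\oos,\oou$ to their geodesic counterparts. By the general principle that a pseudo-Anosov flow is determined up to topological equivalence by the $\pi_1$-equivariant data of its orbit space and its two singular foliations, this equivariant identification lifts to a topological equivalence between $\Phi$ and a geodesic flow in the finite cover, as required.
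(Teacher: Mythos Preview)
Your overall strategy matches the paper's: use the central element $h$, reduce to showing $\hhs\cong\hhu\cong\rrrr$, and finish with a convergence-group argument. But two of the three substantive steps have genuine gaps.

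\textbf{The free case.} Your key claim --- that branching off the axis would, via centrality of $h$ and Proposition~\ref{pro:treefacts}(4), force $h$ to fix the corners of a scalloped chain --- is false. If $L$ and $L'$ are non-separated leaves, Theorem~\ref{theb} gives an element $g$ fixing both and a $g$-invariant $s$-scalloped chain $\mathcal C$. Centrality implies $h$ permutes the $g$-fixed points, hence $h$ preserves $\mathcal C$; but nothing stops $h$ from \emph{translating} along $\mathcal C$ without fixing any corner. So you do not get a contradiction to freeness of $h$ on $\oo$. The paper proceeds differently: it first rules out the possibility that $\aaa(h)$ is an infinite union of closed segments (by observing this would yield a homomorphism $\pi_1(M)\to\operatorname{Aut}({\bf Z})$ with cyclic kernel, forcing $\pi_1(M)$ to be virtually ${\bf Z}$ or ${\bf Z}^2$), then shows $\aaa(h)$ is properly embedded (this is where the paper's stronger dichotomy $Fix^{\sim}(h)=\emptyset$ rather than your $Fix(h)=\emptyset$ is essential --- an improperly embedded line axis would limit on a point $x$ with $h(x)\sim x$), and finally shows $\aaa(h)=\hhs$ using that each complementary region of a leaf covers $M$. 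None of these three substeps is captured by your scalloped-chain argument.

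\textbf{The periodic case.} Your argument here is too vague to assess. The assertion that the $\pi_1(M)$-orbit of a corner ``accumulates in $\oo$ while remaining discrete'' is not a contradiction: an infinite discrete orbit in $\rrrr^2$ can simply escape to infinity. Invoking Lemma~\ref{count} does not help, since that lemma bounds $Fix^{\sim}(h)$, not the $\pi_1(M)$-orbit. The paper's argument is quite different and clean: $Fix^{\sim}(h)$ is closed, countable, and $\pi_1(M)$-invariant, so it projects to a transversely countable sublamination of $\ls$; a minimal sublamination would then have isolated leaves, impossible for a pseudo-Anosov flow.

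\textbf{The conclusion.} Your circle $S^1_s=\hhs/\langle h\rangle$ is legitimate once $\hhs\cong\rrrr$; the paper instead uses the perfect-fit translation $\tau_s$ (and then proves $h=\tau_s^l$), which ties the circle to the flow's skewed structure. Your final appeal to ``a pseudo-Anosov flow is determined by the equivariant orbit-space data'' is a real theorem but needs a reference or proof; the paper sidesteps this by extracting a transverse projective structure on $\ls$ and quoting \cite{Ba1}.
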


\begin{proof}{}
The new mathematical result is a reduction of the
proof to the non singular
case. The smooth Anosov case was originally proved 
in \cite{Ba1}.
We also give an improved proof of the Anosov case,
which may be useful in other contexts.

If necessary lift to a double cover so that the Seifert fibration
is orientable, hence 
the center of $\pi_1(M)$ is non-empty 
(it contains for example the homotopy
class of the regular fibers). 
Let $h$ be in the center of $\pi_1(M)$.
The cyclic subgroup $< h >$ is a normal subgroup of $\pi_1(M)$.
The proof splits in two cases, depending on whether $Fix^{\sim}(h)$ is
empty or not.

\vskip .1in
\noindent
{\bf {Case 1}} $-$ $Fix^{\sim}(h)$ is non empty.

We show that this cannot happen.
Notice that if $x \sim y$ in $\hhs$ and $g$ is in $\pi_1(M)$ then
$g(x) \sim g(y)$.
Let $g$ in $\pi_1(M)$ and $x$ in $Fix^{\sim}(h)$. Then 
$g^{-1} h g(x) = h(x) \sim x$, so 
$hg(x) \sim g(x)$ and $g(x)$ is in $Fix^{\sim}(h)$.
By lemma \ref{count} \ \   $Fix^{\sim}(h)$ is countable.
Therefore $Fix^{\sim}(h)$ is a countable, closed, $\pi_1(M)$
invariant subset of $\hhs$.
Consider the union $Z$ of the leaves $L$ in $\wls$ with
$\nu_s(L)$ in $Fix^{\sim}(h)$. 
This set $Z$ is closed, $\wls$ saturated, $\pi_1(M)$ invariant and transversely
countable.
It projects to a sublamination of $\ls$ which is transversely
countable. Let ${\mathcal L}$ be a minimal sublamination of $\pi(Z)$.
Any sufficiently small  transversal to a minimal lamination
intersects it in either a closed interval, a Cantor set or a point.
The first two are disallowed by the transverse countability
condition. The last option implies that there is an isolated
leaf in $\ls$, which is 
not possible for pseudo-Anosov flows. This shows that case 1 cannot happen.

\vskip .2in
\noindent
{\bf {Case 2}} $-$ $Fix^{\sim}(h)$ is empty.

By theorem \ref{axis}, $h$ has a non empty axis $\ca(h) \ = \ 
\{ x \in \hhs \ | \ h(x) \ {\rm separates} \ x \ {\rm from} \ h^2(x) \}$.
This axis has a linear order where $h$ acts as a translation. 
Clearly, for every $g$ in $\pi_1(M)$:

$$g\ca(h)=\ca(ghg^{-1})=\ca(h)$$

\noindent
hence $\ca(h)$ is $\pi_1(M)$-invariant.

Either $\ca(h)$ is an infinite
segment or a countable union of disjoint closed segments:

$$\ca(h) \ = \ \cup_{i \in {\bf Z}} [x_i, y_i] \ = \ 
\cup_{i \in {\bf Z}} B_i \  \ (*)$$

\noindent
where $y_i \sim x_{i+1}$.
We show that the second option cannot happen.
Suppose by way of contradiction that $\ca(h)$ is of form (*). 
Every $g$ in  $\pi_1(M)$ permutes the components $B_i$,
preserving or reversing the order on the set ${\bf Z}$ of labels.
Hence there is a morphism $\pi_1(M) \to \mbox{Aut}({\bf Z})$, whose kernel 
is the subgroup made of elements $g$ such that $g x_i = x_i$ for all $i$, ie. a
trivial or cyclic normal subgroup. Since $\mbox{Aut}({\bf Z})$ is the diedral group,
containing a cyclic subgroup of index $2$, it follows that $\pi_1(M)$ contains a
finite index subgroup isomorphic to ${\bf Z}$ or ${\bf Z} \oplus {\bf Z}$,
which is not possible for
an irreducible Seifert fibered space without boundary.
We conclude that 
$\ca(h)$ cannot be an infinite collection of disjoint closed segments.

\vskip .08in
Therefore $\ca(h)$ is a real line parametrized as
$\ca(h) = \{ l_t, t \in \rrrr \}$. 
If $\ca(h)$ is not properly embedded in $\hhs$,
then $(l_t)$ converges to a point $x$ in
$\hhs$ as $t$ converges to infinity
(and maybe other points as well).
But then since $\ca(h)$ is invariant under $h$, this implies that
$h(x) \sim x$, which is not allowed in Case 2.

Next we show that $\ca(h)$ is all of $\hhs$. Again suppose it
is not and let $l$ be a point of $\hhs$ not in $\ca(h)$.
Since $\ca(h)$ is connected (as it is a line), then $\ca(h)$
is contained in a single component of $\hhs - \{ l \}$.
Let $B$ be another component
of $\hhs - \{ l \}$. Let $L = \nu^{-1}_s(l)$.
It was proved in \cite{Fe7} that any complementary component of $L$
covers $M$. This implies that given $x$ in $\ca(h)$, there is 
$g$ in $\pi_1(M)$ with $g(x)$ in $B$, which is disjoint from
$\ca(h)$.
This contradicts the $\pi_1(M)$ invariance of $\ca(h)$.

We conclude that $\hhs$ is homeomorphic to $\rrrr$ and 
similarly $\hhu$ is also homeomorphic to $\rrrr$. Therefore 
there are no singularities of $\Phi$ and 
$\ls, \lu$ are $\rrrr$-covered.

\vskip .15in
Since there is no singularity, the flow is actually (topologically)
Anosov. The result was then proved in \cite{Ba1}.
We present a different proof here, which improves arguments in \cite{Ba1}  and which
follows arguments in the unpublished reference \cite{Ba7}.

If there is a leaf of $\wls$ intersecting all leaves
of $\wlu$, then proposition \ref{susp} shows that 
$\Phi$ is 
a product pseudo-Anosov flow. The manifold then
would have solv geometry and could not be  Seifert fibered,
contradiction.

It follows from \cite{Fe1,Ba1} that $\Phi$ has the skewed type:
the orbit space $\oo$ is homeomorphic to an infinite strip in 
$\rrrr^2$ bounded by parallel lines, say with slope one. 
The stable foliation is the foliation by
horizontal segments and the unstable foliation is the
foliation by vertical segments (see figure~\ref{band}).

\begin{figure}
\centeredepsfbox{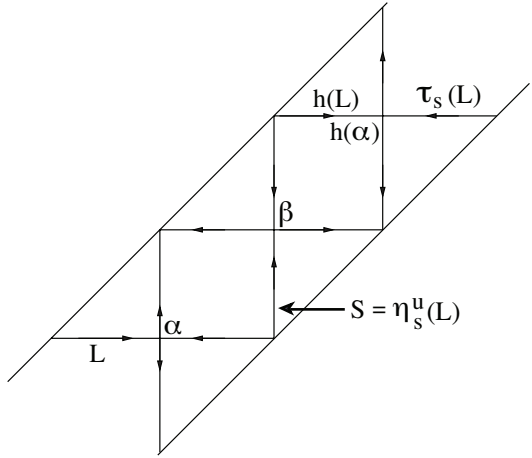}
\caption{Orbit space of skewed type.}
\label{band}
\end{figure}


Put a transverse orientation to $\wls$ positive with increasing $y$
and to $\wlu$ positive with increasing $x$ (where $(x,y)$ are the cartesian
coordinates in $\oo$).
For each stable leaf $L$, there is in the positive side of $L$
a unique unstable leaf $S$ which makes a perfect fit with $L$ $-$
in this model it is equivalent to $S$ sharing an endpoint with $L$.
This produces a $\pi_1(M)$ equivariant map $\eta^u_s$ 
from $\hhs$ to $\hhu$, which is a homeomorphism (\cite{Ba1,Fe1}).
Similarly for each $S$ in $\wlu$ there is a unique $E$ of $\wls$
in the positive side of $S$ and sharing an endpoint with $S$.
The composition $L \rightarrow S \rightarrow E$ is a translation
$\tau_s$ in $\hhs$ and $\hhs/\tau_s$ is a circle
$S^1_s$.
Similarly one has $\tau_u$ which is increasing from 
$\hhu$ to $\hhu$  and a circle $S^1_u = \hhu/\tau_u$.
Both $\tau_s$ and $\tau_u$ are $\pi_1(M)$ equivariant 
homeomorphisms (\cite{Ba1,Fe1}), so $\pi_1(M)$
acts on $S^1_s$ and $S^1_u$.
We denote the first action by

$$\xi_s: \ \pi_1(M) \ \ \rightarrow \ \ {\rm \operatorname{Homeo}}(S^1_s).$$

In addition
the map $\eta^u_s: L \rightarrow S$ as above is also equivariant by the action
of $\pi_1(M)$ and hence induces a canonical homeomorphism from
$S^1_s$ to $S^1_u$ with inverse denoted by $\zeta$.
So we can identify $S^1_s \times S^1_u$ with $S^1_s \times S^1_s$ 
by $(z,w) \rightarrow (z,\zeta(w))$. This induces an 
action of $\pi_1(M)$ on $S^1_s \times S^1_s$.

For every orbit $\beta$ of $\wwp$, there are unique leaves
$L$ of $\wls$ and $G$ of $\wlu$ so that $\beta = L \cap G$.
Using $L$ and $G$, the
orbit $\beta$ generates a point in $S^1_s \times S^1_u$
and hence a point $(p,q)$ in $S^1_s \times S^1_s$. 
We say that $\beta$ projects to $(p,q)$.
This defines a map 

$$\nu: \ \oo \ \ \rightarrow \ \ S^1_s \times S^1_s.$$

\noindent
The projection $(p,q)$ is not in the 
diagonal $\Delta$: points in the diagonal correspond to
$L$ in $\wls$ and $S$ in $\wlu$ so that 
$S = \eta^u_s (\tau_s)^n (L)$ for some integer $n$. 
In particular $L$ and $S$ do not intersect and neither does
$S$ intersect $(\tau_s)^m(L)$ for any integer $m$.
Conversely if $(p,q)$ is in $S^1_s \times S^1_s - \Delta$, then 
one can lift $p$ to a leaf $L$ of $\wls$ and $q$ lifts
to a stable leaf, which after the identification
$S^1_s$ with $S^1_u$ produces $S$ in $\wlu$ with $S \cap L$ not empty.

Note that if $g$ acts trivially on $\hp^s$ then $g$ is the identity
in $\pi_1(M)$. This follows for instance  because the set of fixed
points of non trivial elements of $\pi_1(M)$ is discrete in $\hp^s$.

\vskip .1in
\noindent
{\bf {Claim 1}} $-$ $h$ acts trivially on $S^1_s$.

Let $\widetilde \alpha$ be a lift of a periodic orbit $\alpha$
associated to
a covering translation $g$. 
Then
$g^2 h(\widetilde \alpha) = h g^2 (\widetilde \alpha) = h(\widetilde \alpha)$,
so $\widetilde \alpha$ and $h(\widetilde \alpha)$ are connected
by a chain of $n$ lozenges by Theorem \ref{chain}.
Replacing $g^2$ by $g^{-2}$ if necessary, we can assume that
$\widetilde \alpha$ is an attracting fixed point of the restriction of $g^2$ to
the stable leaf $L$ through $\widetilde \alpha$. Then $h(\widetilde \alpha)$
is also an attractive fixed point of the restriction of $g^2$ to $h(L)$.
It follows (see fig.~\ref{band}) that $n$ is even.
In the figure $\beta$ is connected to $\alpha$ by one lozenge
and $h(\alpha)$ is connected to $\alpha$ by a chain
of 2 lozenges.
Therefore $h(L) = (\tau_s)^i(L)$ for $i = n/2$.

This implies that the projections 
to $S^1_s$ of periodic leaves are fixed points of $\xi_s(h)$.
Since periodic leaves are dense, we conclude that $\xi_s(h)$ is the identity map
on $S^1_s$. The claim is proved.


\vskip .1in
Recall that $h$ was any element of the center of $\pi_1(M)$.
Here $\pi_1(M)$  cannot be ${\bf Z}^3$ because $M$ has a pseudo-Anosov
flow.
It follows that the center of $\pi_1(M)$ is a cyclic subgroup \cite{He,Ja-Sh}. 
From now, we assume that $h$ generates
the center; and we denote by $l$ the integer such that 
when acting on $\hhs$, then $\tau_s^l=h$. In order to simplify the presentation,
we identify in the sequel $\hhs$ with $\rrrr$ in
a way that $\tau_s$ is the translation
$x \mapsto x+1$.

Let now $f$ in the kernel of $\xi_s$. When acting on $\hhs$,
$f(x) = x + j$ for some $j$ in ${\bf Z}$.
In addition given any $g$ in $\pi_1(M)$ and considering the action on $\hhs$,
it follows that for any $x$ in $\hhs$, for any $i$ in ${\bf Z}$,
then $g(x + i) = g(x) + i$. 
Now, for any $g$ in $\pi_1(M)$, again when considering the action on
$\hhs$ we have

$$g^{-1} f^{-1} g f (x) \ = \ g^{-1} f^{-1} g(x + j)  \ = \ 
g^{-1} f^{-1} (g(x) + j) \ = \ g^{-1} g(x) \ = \ x.$$

\noindent
Therefore $g^{-1} f^{-1} g f$ acts trivially on $\hhs$ and
is the identity in $\pi_1(M)$. 
Hence $f$ is in the center of $\pi_1(M)$ which
is $<h>$.

\vskip .05in
\noindent
{\bf {Conclusion}}: 
$\operatorname{ker} \xi_s \ = \ < h > \ = \ \ {\rm center \ of } \ \ \pi_1(M)$.
\vskip .05in

Let $H = < h >$ and 
$Q = \pi_1(M)/H$.
Since $H$ is the kernel of $\xi_s$, there is an induced 
action $\overline \xi_s$ of $Q$ on $S^1_s$.
Given $g$ in $\pi_1(M)$ let $\overline g$ be its image in $Q$.
By the conclusion above the action $\overline \xi_s$ is faithful.


We now think of $S^1_s$ as the ideal boundary of the hyperbolic disc
${\bf H}^2$ and $(p,q)$ as the hyperbolic geodesic in ${\bf H}^2$
connecting these endpoints.

\begin{lemma}{}{}
The action $\overline \xi_s$ of $Q$ on $S^1_s$ is a convergence group action.
\end{lemma}

\begin{proof}{} First we prove the following fact:

\vskip .1in
\noindent
{\bf {Claim 2}} $-$
Two arbitrary orbits $\beta_1, \beta_2$ of $\wwp$ are connected by
a chain of lozenges if and only if $\beta_1, \beta_2$ project
to either the same point of $S^1_s \times S^1_s - \Delta$ or
one projects to some point
$(p,q)$ and the other projects to $(q,p)$.
In the first case they are connected by an even number of lozenges
and in the second case they are connected by an odd number 
of lozenges.

Suppose first that $\beta_1, \beta_2$ are connected by a chain
of lozenges.
The first lozenge in the chain has a stable side $L$ containing
$\beta_1$.
There is an   unstable side
$S$ of the lozenge making a perfect fit with $L$. 
The other corner $\beta$ of the lozenge is contained in 
$S$. 
Suppose wlog that $S$ is in the positive side of $L$.
Then $S = \eta^u_s(L)$. In addition $\wu(\beta_1), \ws(\beta)$
also make a perfect fit and 

$$\wu(\beta_1)  \ \ = \ \ 
\eta^u_s (\tau_s^{-1}(\ws(\beta)).$$

\noindent
So if $\beta_1$ projects to $(p,q)$ then $\beta$ projects
to $(q,p)$. 
Following the lozenges in the chain proves that $\beta_2$ projects
to either $(p,q)$ or $(q,p)$.
Using these arguments one sees that 
$\beta_1$ and $\alpha = \tau_s(L)
\cap \tau_u(\widetilde W^u(\beta_1))$ are connected by a chain
of two lozenges.

Conversely suppose that $\beta_1$ and $\beta_2$ both project
to $(p,q)$. Let $F = \widetilde W^s(\beta_1), \ G = \widetilde W^u(\beta_1)$ 
and let also
$E = \widetilde W^s(\beta_2), \ S = \widetilde W^u(\beta_2)$.
Since the projections of both $\beta_1$ and $\beta_2$ have
the same point $p$ as first coordinate, there is $n$ in ${\bf Z}$
so that $E = \tau^n_s(F)$. Similarly there is $m$ in ${\bf Z}$ with
$S = \tau^m_u(G)$.
In the collection $\{ \tau^i_u(G), i \in {\bf Z} \}$, there is only
one element intersecting $\tau^n_s(F)$ and that is $\tau^n_u(G)$.
It follows that $n = m$. In addition

$$\beta_2 \ \ = \ \ \tau^n_s(F) \cap \tau^n_u(G).$$

\noindent
As explained above $\beta_1$ and $\tau_s(F) \cap \tau_u(G)$
are connected by a chain of two lozenges and by induction
$\beta_1$ and $\beta_2$ are connected by a chain with an
even number of lozenges.
The case that $\beta_1$ projects to $(p,q)$ and
$\beta_2$ projects to $(q,p)$ 
is very similar and is left to the reader.
This proves claim 2.
\vskip .1in

Let $\alpha$ be an arbitrary  closed orbit of $\Phi$, 
let $\widetilde \alpha$ be a lift to $\mi$, which is invariant
under $g$ in $\pi_1(M)$, with $g$ associated to $\alpha$ in
the positive direction.
Let $(p,q)$ in $S^1_s \times S^1_s - \Delta$ 
be $\nu(\widetilde \alpha)$.
Recall that $h$ in $\pi_1(M)$ represents the fiber of the Seifert
fibration.
Since $h$ acts trivially on $S^1_s$, then  claim 2 implies that
$\widetilde \alpha$ and $h(\widetilde \alpha)$ are
connected by a chain of lozenges with an even number of lozenges
\cite{Fe1}.
Therefore the set of 
orbits in the complete chain of lozenges
from $\widetilde \alpha$ 
is finite modulo the action by $< h >$ and this set
projects to a finite set $V$ of orbits of $\Phi$
in $M$. But $\alpha$ is closed, so $V$ is a finite set of
closed orbits and hence discrete in $M$. Hence 
$\pi^{-1}(V)$ is a discrete, 
$\pi_1(M)$ invariant set of orbits of $\wwp$.
We conclude that
$\nu(\Theta(\pi^{-1}(V)))$ is a discrete set
in $S^1_s \times S^1_s - \Delta$.
It is also $\pi_1(M)$ invariant.
This is the ``orbit'' of $(p,q)$ under the action of $\pi_1(M)$.

Now given $\alpha, \widetilde \alpha, g$ as above,
let $L = \widetilde W^s(\widetilde \alpha)$.
Then $g(L) = L$ and since $g$ is associated to the
positive direction of $\alpha$ then $L$ is a 
contracting fixed point of $g$ acting on $\hhs$.
In the same way $S = \widetilde W^u(\alpha)$ is also
fixed by $g$ and it is a repelling fixed point
of $g$ acting on $\hhu$ and hence $p$ is
the attracting fixed point of $g$ acting on 
$S^1_s$ and $q$ is the repelling fixed point.
There are no other fixed points.

\vskip .08in
In order to prove the convergence group property
for the action $\overline \xi_s$ of $Q$ on $S^1_s$, we now
consider a sequence $b_n$ of distinct elements of $Q$ and
let $g_n$ in $\pi_1(M)$ with $b_n = \overline g_n$.
In the arguments below we abuse notation and also denote by
$\xi_s$ the action of $\pi_1(M)$ on
$S^1_s \times S^1_s$ $-$ the context makes clear which one is
being used.

\vskip .05in
Consider a closed orbit $\alpha$ as above, with a given
lift $\widetilde \alpha$,  corresponding points $p, q$ in
$S^1_s$ and $L = \ws(\widetilde \alpha)$.
Suppose first that
up to subsequence 

$$\xi_s(g_n)((p,q)) \ = \ (p,q) \ \ \ {\rm or} \ \ \ 
\xi_s(g_n)((p,q)) \ = \  (q,p) \ \ \ {\rm for \ all} \ \ n.$$

\noindent
Notice that it does not matter if we consider 
$\xi_s(g_n)$ or $\overline \xi_s (\overline g_n)$.
First a reduction: if $\xi_s(g_n)((p,q)) = (q,p)$ for all $n$, then
replace $\widetilde \alpha$ by $g_1(\widetilde \alpha)$ and 
$g_n$ by $g_n g^{-1}_1$. The new collection satisfies
$\xi_s(g_n)((p,q)) = (p,q)$ for all $n$.
Claim 2 implies that
for every $n$, $g_n(\widetilde \alpha)$ is connected
to $\widetilde \alpha$ by a chain of lozenges, with
an even number of lozenges. 
For each $n$ there is $a_n$ so that
$g_n(L) \ = \  \tau^{a_n}_s(L)$.
Recall the integer $l$ above so that $h = \tau^l_s$ when acting
on $\hhs$.
There are $b_n$ and $c_n$ in ${\bf Z}$ with 
$0 \leq c_n < l$ and $a_n = b_n l + c_n$.
Up to another subsequence we assume that $c_n$ is
constant. Again up to taking $g_1(\widetilde \alpha)$ instead
of $\widetilde \alpha$ and $g_n g^{-1}_1$ instead
of $g_n$ we may assume that $c_n = 0$ for all $n$.
The above facts imply that
for each $n$ there is $i_n$ in
${\bf Z}$ so that $h^{i_n} g_n(\widetilde \alpha) 
= \widetilde \alpha$
(in fact $i_n = -b_n$).
Therefore $h^{i_n} g_n = f^{j_n}$, for some
$j_n$ in ${\bf Z}$ where $f$ is
a generator of the isotropy group of $\widetilde \alpha$
in the forward direction.
Notice that $\xi_s(h^{i_n})$ acts as the identity on $S^1_s$
(and also on $S^1_s \times S^1_s - \Delta$).
If there is a subsequence $(j_{n_k})$ which is constant, then
the formula

$$g_{n_k} \ \ = \ \ h^{-i_{n_k}} f^{j_{n_k}}$$

\noindent
shows that all $\xi_s(g_{n_k})$
act in exactly the same way
on $S^1_s$.
Then $\overline \xi_s (\overline g_{n_k})$ is constant
and since $\overline \xi_s$ is faithful, then the sequence
$(\overline g_{n_k})$ is also constant 
$-$ contradiction to hypothesis.
So up to subsequence we may assume (say) that $j_n$ converges to infinity 
(as opposed to converging to minus infinity) when 
$n \rightarrow \infty$.
Then 

$$\xi_s(g_n) \ = \ \xi_s(h^{-i_n} f^{j_n}) \ = \ \xi_s(f^{j_n})$$

\noindent
and $p$ is the
sink for the sequence $\xi_s(g_n)$
acting on $S^1_s$ and
$q$ is the source.
This proves the convergence group property in this case.

\vskip .1in
From now on we assume up to subsequence that $\xi_s(g_n)((p,q)) \not = (p,q),
(q,p)$ for
all $n$. 
In fact by the same arguments we can assume that all $\xi_s(g_n)((p,q))$ are distinct.
Since the orbit of $(p,q)$ under $\pi_1(M)$ is discrete in 
$S^1_s \times S^1_s - \Delta$,
then up to subsequence $\xi_s(g_n)((p,q))$ converges to a point $(z,z)$ in 
$S^1_s \times S^1_s$.
These arguments work for {\underline {any}}
closed orbit $\alpha$.

\vskip .1in
We now show that $\xi_s(g_n)$ has a subsequence with the source/sink behavior.
Fix an identification of $S^1_s$ with the unit circle ${\bf S}^1$.
Since $\Phi$ is $\rrrr$-covered,
then the set of closed orbits is dense \cite{Ba1}.
Find $(p_1,q_1)$  corresponding to a periodic orbit,
very close to $(-1,1)$ and not disconnecting
these two points in ${\bf S}^1$.
By the above arguments,
up to subsequence $\xi_s(g_n)((p_1,q_1))$ converges to a single
point $(z,z)$ in ${\bf S}^1 \times {\bf S}^1$. Therefore one interval $I_1$ of
${\bf S}^1$ defined by 
$(p_1,q_1)$ converges to $z$ under $\xi_s(g_n)$.
The interval $I_1$ has length close to half the length of the circle
${\bf S}^1$. We work by induction assuming that
an interval $I_i$ has been produced.
Let $J_i$ be the closed complementary interval 
to $I_i$. Find a periodic
point $(p_i,q_i)$ so that:
$q_i$ is in $J_i$ and almost cuts it in half and $p_i$ is
in the interior of $I_i$ (switch $p_i$ and $q_i$ if necessary).
We already know that $\xi_s(g_n)(p_i))$ converges to $z$.
As before up to another subsequence
one of the intervals defined by $(p_i, q_i)$ converges to
a point under $\xi_s(g_n)$, which then must be $z$ as $p_i$ is in $I_i$.
Adjoin this interval to $I_i$ to produce $I_{i+1}$ which
converges to $z$ under $\xi_s(g_n)$. Let $J_{i+1}$ be the 
closed complementary interval.
Since each step roughly reduces the
size of the remaining interval by a factor of $1/2$, then the intervals
$J_i$ converge to a single point $w$. Use a diagonal process
and obtain a sequence $\xi_s(g_{n_k})$ with source $w$ and sink $z$.
This finishes the proof of the convergence group property.

Notice that as we mentioned before, we denoted by $\xi_s$
the action on both $S^1_s$ and $S^1_s \times S^1_s - \Delta$.
\end{proof}

\noindent
{\underline {Convention}} $-$ We lift to a double cover if necessary
so that $\wls$ is transversely orientable.
Every orientation preserving
convergence group acting on the circle is equivalent in
$\operatorname{Homeo}^+({\bf S}^1)$ to a Fuchsian group \cite{Ga,Ca-Ju}.
Let $\Gamma$ be $\overline \xi_s(Q)$.
Hence $\Gamma$ is equivalent to a Fuchsian group $T$.
Here $O = {\bf H}^2 / T$ is a hyperbolic 2-dimensional 
orbifold.

We have a  conjugation $\psi: S^1_s \rightarrow {\bf S}^1$ 
between the action of $\Gamma$ on $S^1_s$ and a Fuchsian action
$T$ on ${\bf S}^1$. 
Lift $\psi$ to a homeomorphims $\widetilde \psi:  \hhs \rightarrow \rrrr$.
Let $g$ in $\pi_1(M)$ and 
we also think of $g$ as acting on $\hhs$. Then 

$$\psi \circ \overline \xi_s(\overline g) \circ \psi^{-1} \ = \ 
\psi \circ \xi_s(g) \circ \psi^{-1}$$

\noindent
is the ideal map of a Moebius transformation and
hence $\widetilde \psi g (\widetilde \psi)^{-1}$ is a projective transformation
of $\rrrr$.
This shows that the foliation  
$\Lambda^s$ is transversely projective.
As shown by the first author in \cite{Ba1}, this implies
that the flow $\Phi$ is up to a finite cover,
topologically equivalent
 to
a geodesic flow in the unit tangent bundle of a 
hyperbolic surface. 
This finishes the proof of theorem \ref{Seifert}.
\end{proof}



\vskip .1in
\noindent
{\bf {Remark:}} One may ask whether  theorem \ref{Seifert} can
be improved to remove the finite covers condition, perhaps
by considering geodesic flows in unit tangent bundles of
hyperbolic orbifolds. But this is not possible in general,
because one can unwrap the fiber direction. 
We explain this:
suppose $\Phi'$ is the geodesic flow in $T_1 S$, where $S$ 
is a closed hyperbolic orbifold. Let $\alpha$ be a closed 
orbit of $\Phi'$, that is, it comes from 
 a closed geodesic $\gamma$ in $S$, where
for simplicity we assume that $\gamma$ does not pass
through a singularity of $S$.
Then the vectors in $T_1 S$ projecting to $\gamma$ in $S$
form a torus $T'$ in
$T_1 S$ and there are 
{\underline {exactly}} two closed orbits in $T'$ corresponding
to the two directions along $\gamma$.
To get a counterexample start with $M = T_1 R$ where again
for simplicity $R$ is a non singular hyperbolic surface.
Let $M_1$ be a finite cover of $M$ obtained by unwrapping
the fiber direction some number $n$ of times.
Then $M_1$ is Seifert fibered and the geodesic flow in $T_1 R$
lifts to an Anosov flow in $M_1$.
Any torus in $T$ in $M_1$ projects to a torus in $T_1 R$ and this is
homotopic to a torus over a closed geodesic of $R$, but 
traversed $n$ times in the fiber direction. 
This implies that the original torus is homotopic to one
which has $2n$ closed orbits $-$ and hence cannot be a 
torus of the geodesic flow of a hyperbolic orbifold.
Hence the Anosov flow in $M_1$ cannot be topologically
equivalent to a geodesic flow, but is equivalent
to a finite cover of a geodesic flow.

\vskip .2in
\noindent
{\bf {EXAMPLES and COUNTEREXAMPLES}}


Recall that in a one prong pseudo-Anosov flow we allow the existence
of one prongs.
One prong pseudo-Anosov flows can behave completely differently from
pseudo-Anosov flows. In particular it is well known that there are one prong 
pseudo-Anosov flows in ${\bf S}^2 \times {\bf S}^1$, so the manifold $M$
need not be irreducible and the universal cover need not be $\rrrr^3$.

Here we introduce 2 new classes of examples of one prong pseudo-Anosov flows.

\vskip .1in
\noindent
1) Let $R$ be a closed hyperbolic surface with an order 2  symmetry 
$\sigma$ which is an isometric
reflection along a non separating 
simple closed geodesic $\alpha$ of $R$.
Let $M_1$ be the unit tangent bundle of $R$ and $\Phi_1$ be the 
geodesic flow in $M_1$. The isometry $\sigma$ sends geodesics 
of $R$ to geodesics
and preserves the geodesic flow. It induces a map $\sigma_*$ in $M_1$ which 
has order $2$. Let $M$ be the quotient of $M_1$ by the map $\sigma_*$. 
The map $\sigma_*$ 
does not act freely: the fixed points correspond exactly to the tangent
vectors to $\alpha$ $-$ there are two closed orbits $\alpha_1, \alpha_2$ of
$\Phi_1$ which are fixed pointwise by $\sigma_*$. 
These correspond to the 2 directions
in $\alpha$. Hence $M$ is an orbifold, but admitting
a natural manifold structure so that the projection map 
$M_1 \to M$ is an order 2  branched covering map.
The flow $\Phi_1$ induces a flow
$\Phi$ in $M$ because $\sigma$ sends geodesics to geodesics.
The stable/unstable foliations of $\Phi_1$ are invariant
under $\sigma_*$ so induce stable/unstable foliations
of $\Phi$.
The stable leaf of $\Phi_1$ through
$\alpha_1$ folds in two, producing a  one prong singularity of $\Phi$ and
similarly for $\alpha_2$. The flow $\Phi$ is an example of 
a one prong pseudo-Anosov flow.
Alternatively the 
manifold $M$ is obtained as follows: let $R_1, R_2$ be the 
closures of the 2 components of $R - \alpha$. The unit tangent bundle of
$R_1$ is homeomorphic to $R_1 \times {\bf S}^1$, with boundary a torus $Z$
with 2 closed curves corresponding to $\alpha_1$ and $\alpha_2$.
The map $\sigma_*$ identifies one complementary annulus of $\alpha_1, \alpha_2$ in $Z$ to
the other one with no shearing. This is obtained by a Dehn filling of $Z$ where 
$\{ t \} \times {\bf S}^1$ is the meridian.
Therefore $M$ is homeomorphic to the union of 
$N_1 = R_1 \times {\bf S}^1$ and a solid
torus. 
This is almost a graph manifold: it is the union of Seifert fibered spaces,
but $M$ is not irreducible:
Take a non peripheral arc $l$ in $R_1$. Then $l \times {\bf S}^1$ is an annulus
in $R_1 \times {\bf S}^1$ which is 
capped off with 2 discs in the solid torus to produce a sphere which 
is non separating in $M$ and hence clearly does not bound a ball
in $M$.

\vskip .1in
\noindent 
{\bf {Remark}} $-$ This example and the next work whenever the 
hyperbolic surface $R$ admits an isometric reflection along a
collection of simple closed geodesics $\{ \alpha_i \}$. For simplicity
of exposition we describe the examples in 1) and 2) with a single
geodesic $\alpha$.

\vskip .1in
2) The second class of examples is obtained by a modification of example 
1) in order to be
in a Seifert fibered manifold.
The modification is that the glueing of the annuli in 
$\partial N_1$ is done with
a shearing. 
The notation is the same as in example 1): $R$ is the hyperbolic
surface with a geodesic $\alpha$ of symmetry and $R_1, R_2$ the
closures of the components of $R - \alpha$.
The unit tangent bundle of $R$ is $M_1$ and $N_1, N_2$ are
the restrictions to vectors in $R_1$ and $R_2$ respectively.
We use 2 tori: 
$\partial N_1 = T_1$ and $\partial N_2  = T_2$. These are glued
to form $M_1$. Put coordinates
$(\theta_1, \theta_2)$ in $T_1$, $(a_1, a_2)$ in $T_2$ as follows:
$T_1$ consists of the unit vectors along $\alpha$. 
Parametrize $\alpha$ by arc length parameter
$t$ where $0 \leq t \leq l_0$ and $l_0$ is the length of $\alpha$. 
Let $\theta_1 = 2 \pi t/l_0$.
Let $\theta_2$ be the angle between the unit tangent vector 
to $\alpha$ and the vector $v$, where
$\theta_2 = 0$ corresponds to the direction of $\alpha_1$.
Also $\theta_2 = \pi$ corresponds to $\alpha_2$ and 
$0 < \theta_2 < \pi$ are the vectors
exiting $N_1$ and entering $N_2$.
Put coordinates $(a_1,a_2)$ in $T_2$ so that the glueing map
to create $M_1$ is $\eta: T_1 \rightarrow T_2$ given
by $a_1 = \theta_1, a_2 = \theta_2$ 
(essentially the same coordinates). Notice that
vectors with $0 < a_2 < \pi$ are entering $N_2$
and vectors with $\pi < a_2 < 2 \pi$ are entering $N_1$.

In $N_1$ we consider the restriction of the geodesic flow of $R$. 
We collapse
$\partial N_1 = T_1$ to an annulus as follows. Let $A_1$ be the strip
$0 \leq \theta_2 \leq \pi$ in $T_1$ and let $A_2$ be the strip
$\pi \leq \theta_2 \leq 2 \pi$ in $T_1$. We glue $A_1$ to $A_2$ by

$$f(\theta_1, \theta_2) \ =  \ 
(\theta_1 + 2n\theta_2, 2 \pi - \theta_2) \ \ \ \ \ (*)$$

\noindent
Let $M$ be the quotient of $N_1$ by this glueing
and let $\Phi$ be the induced flow from the geodesic flow in $N_1$.
Notice that the flow in $N_1$ is outgoing in the interior of
$A_1$ and incoming in the interior of $A_2$.
In addition, the angle between flow lines and $T_1$ depends only
on $\theta_2$ and not on $\theta_1$ (by definition) and so by formula $(*)$
this produces a flow $\Phi$ in $M$ which is smooth outside of the
closed orbits $\alpha_1, \alpha_2$.
Here we abuse notation and continue to call $\alpha_1, \alpha_2$ their
projections to $M$.

Let $A$ be the annulus which is the quotient of $A_1, A_2$ by
the glueing. Let $M_2$ be the double branched cover
of $M$ obtained by double branched cover (opening up)
 along $A$. This $M_2$
can be cut along the torus $T$ which is the preimage
of $A$. The closure of the 2 complementary components of $T$
are homeomorphic to $N_1$ and $N_2$ and still denoted
by $N_1, N_2$. We think of $N_1$ as the unit tangent
bundle of $R_1$. We can also think of $N_2$ as the
unit tangent bundle of $R_2$ $-$ this is because
$N_2$ under the branched cover is another copy of
$N_1$, which is isometric to $N_2$ 
by the map $\sigma_*$ induced by the symmetry
$\sigma$ of the surface $R$. Let $T_1,T_2$ be the
corresponding boundaries of $R_1,R_2$, with the
corresponding coordinates $(\theta_1,\theta_2)$
and $(a_1,a_2)$ as above. Therefore $M_2$ is
obtained by a certain glueing of map $g$ from $T_1$ to $T_2$.

We first extend the map $f$ to an involution on the entire torus $T_1$:
in $A_2$ (which is the region $\pi \leq \theta_2
\leq 2\pi$), the map $f$ has the same formula
$f(\theta_1,\theta_2) = (\theta_1 + 2n\theta_2, 2\pi-\theta_2)$.
Clearly $f$ is an involution in $T_1$.

\vskip .1in
\noindent
{\bf {Claim}} $-$ In order to obtain the flow 
$\Phi$ in $M$, the glueing from $T_1$ to $T_2$ in
the $(\theta_1,\theta_2)$, $(a_1,a_2)$ coordinates is given by:

$$g: T_1 \rightarrow T_2, \ \ \ \ 
g(\theta_1, \theta_2) = (\theta_1 + 2n \theta_2, \theta_2).$$

In order to prove the claim we
need to show that when restricted to
the annulus $A_1$ then $f = \sigma_* g$.
Recall that $\sigma_*$ restricted to $T_2$ (which is identified
with $T$) has the form
$\sigma_*: T_2 \rightarrow T_1$, 
$\sigma_*(a_1,a_2) = (a_1, 2\pi - a_2)$.
It is now clear that  $f = \sigma_* g$ in $A_1$. 
By the extension of $f$ to $A_2$, this also holds
in $A_2$.
This proves the claim.

\vskip .1in
Let $\Phi_2$ be the lift of the flow $\Phi$ to $M_2$.
This flow $\Phi_2$ is the geodesic flow in $R_1$ when restricted
to $N_1$ and the the geodesic flow
of $R_2$ when restricted to $N_2$. 
The glueing is given by the map $g$ described
above.
The map $g$ is a shearing. In a very nice result, Handel and Thurston
\cite{Ha-Th} studied exactly this example and they 
proved that the flow $\Phi_2$ in $M_2$  is an Anosov flow which is volume
preserving.
Therefore this flow has stable and unstable foliations which
project to stable/unstable foliations of $\Phi$: this is because
if 2 orbits in $M_2$ are asymptotic then their projections
to $M$ are asymptotic and vice versa. The projection
from $M_2$ to $M$ is locally injective and smooth except along $\alpha_1$ and
$\alpha_2$, where it is 2 to 1. Hence the stable/unstable foliations in 
$M$ are non singular except possibly at $\alpha_1, \alpha_2$.
Since the projection is 2 to 1 and stable leaves go to stable leaves,
then along the stable leaf of $\alpha_1$ the stable leaf folds
in two and similarly for the unstable leaf and likewise 
for $\alpha_2$. Therefore $\Phi$ is smooth everywhere
except at $\alpha_1, \alpha_2$ which are one prong
singularities. We conclude that $\Phi$ is a one prong
pseudo-Anosov flow.


Finally $M$ can be thought as a Dehn filling of $N_1$ along
$T_1$. We determine the new meridian.
%
Under the map $f$ from $A_1$ to $A_2$, 
the segment $\theta_1 = 0, \ 0 \leq \theta_2 \leq \pi$ \ in $A_1$ is
glued to the the segment
$(2n \theta_2, 2 \pi - \theta_2), \ 0 \leq \theta_2 \leq \pi$ \ in $A_2$. 
This last segment goes from $(0,2 \pi)$ to $(2n \pi, \pi)$ linearly.
It follows that this is the new meridian which is then the 
$(-n,1)$ curve. 

When
$n = 0$, this is exactly the same construction as in the first example
which makes the fiber in $N_1$ null homotopic. When $n\not=0$,
the curve which becomes null homotopic
is not $\{ p \} \times {\bf S}^1$.
It follows that the resulting manifold $M$ is Seifert fibered.


\vskip .1in
\noindent
{\bf {Conclusion}} $-$ 
If one allows $1$-prongs, then Seifert fibered manifolds
can admit one prong pseudo-Anosov flows with singularities as opposed
to what happens  with pseudo-Anosov flows.
Theorem A does not hold for one prong pseudo-Anosov flows.


\vskip .1in
 This poses the
following questions: Suppose that $\Phi$ is a one prong
pseudo-Anosov flow in $M$ Seifert fibered (closed). 
Can one show that there are no $p$-prongs with $p \geq 3$?
Can one show that $\Phi$ has a branched cover to an Anosov flow
in a Seifert manifold?

\vskip .1in
\noindent
{\bf {Remark:}} 
With this description of geodesic flows we now mention
the following, which will be extremely useful later on
in the article.
Here is an explicit example of a Klein bottle in a manifold with
an Anosov flow. Let $\Phi$ be the geodesic flow of a nonorientable hyperbolic
surface $S$ and $\alpha$ an orientation reversing simple geodesic.
Let $A$ be the unit tangent bundle of $\alpha$ and $\alpha_1, \alpha_2$,
the two orbits of $\Phi$ associated to the two directions of $\alpha$.
Consider tubular neighborhoods of 
of $\alpha_1$, $\alpha_2$. These are solid tori,
and $A$ in these neighborhoods
wraps around each of these periodic orbits twice producing a
M\"obius band, which contains the periodic orbit, and with boundary
a closed curve homotopic to the double of the periodic orbit. 
It follows that the closure
of $A$ is the union of an annulus (outside the solid tori) and two M\"obius strips
and therefore $A$ 
is a Klein bottle. This is a typical example of {\em Birkhoff-Klein bottle},
see formal definition in section \ref{sec:birk}.
A tubular neighborhood of this Klein bottle if homeomorphic to the twisted
line bundle over the Klein bottle. 

\section{Pseudo-Anosov flows in manifolds with virtually solvable fundamental group}

In this section we first do a detailed analysis of maximal subgroups of 
$\pi_1(M)$ stabilizing a given chain of lozenges.
Conversely given a subgroup of $\pi_1(M)$ isomorphic to ${\bf Z}^2$ we 
analyse the uniqueness of chains of lozenges invariant under this subgroup.
These results are foundational for understanding
 any ${\bf Z}^2$ subgroup of $\pi_1(M)$
and they are fundamental for the analysis of pseudo-Anosov flows
in manifolds with virtually solvable fundamental groups.
The results are later used for other results in this article.
We also expect that these
results will be useful for further study of pseudo-Anosov flows
in toroidal manifolds.

In this section let $K$ denote the Klein bottle.
We first need a result from $3$-dim topology.
Let $F$ be a compact surface with a free involution $\tau$.
Then $M = (F \times I)/(x,t) \sim (\tau(x),1-t)$ is 
a {\em twisted $I$-bundle} over the surface
$F' = F/x \sim \tau(x)$ and $F$ is the associated $0$-sphere
bundle, see \cite{He}, page 97.

\begin{lemma}{}{}
Let $N$ be an irreducible, compact $3$-manifold with finitely generated
fundamental group which is torsion free and has a finite index subgroup 
isomorphic to ${\bf Z}^2$. Then $N$ is either an I-bundle or a twisted I-bundle 
over a surface of zero Euler characteristic. In particular 
$\pi_1(N)$ is isomorphic to 
either ${\bf Z}^2$ or $\pi_1(K)$.
In addition if $N$ is orientable, then either $N = T^2 \times I$ or
$N = (T^2 \times I)/(x,t) \sim (\tau(x),1-t)$ is a twisted $I$-bundle
over the Klein bottle $T^2/x \sim \tau(x)$ which is one sided in $N$.
\label{fund}
\end{lemma}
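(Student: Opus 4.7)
The plan has two largely independent parts: an algebraic classification of $\pi_1(N)$, followed by a $3$-manifold recognition step. The third conclusion (orientable case) then drops out by inspection.

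First, I would classify $\pi_1(N)$ abstractly. Let $A\cong{\bf Z}^2$ be the given finite-index subgroup. Passing to the intersection of its finitely many conjugates (finite index since $\pi_1(N)$ is finitely generated), I may assume $A$ is normal. The short exact sequence
\[
1\to A\to\pi_1(N)\to Q\to 1
\]
exhibits $\pi_1(N)$ as a $2$-dimensional crystallographic group, and the conjugation action gives $\rho\colon Q\to\operatorname{GL}(2,{\bf Z})$. Torsion-freeness of $\pi_1(N)$ forces every non-trivial element of $Q$ to act without fixed points on $A$, and the Bieberbach/Hantzsche--Wendt classification of $2$-dimensional torsion-free crystallographic groups then leaves only two possibilities: $\pi_1(N)\cong{\bf Z}^2$, or $\pi_1(N)\cong\pi_1(K)=\langle a,b\mid aba^{-1}=b^{-1}\rangle$.

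Second, I would recover $N$ topologically. Since $N$ is compact, irreducible, with infinite $\pi_1$, it is aspherical, and its universal cover is homeomorphic to ${\bf R}^3$ (Hempel, Ch.~8--10); in particular $N$ is a $K(\pi,1)$ and is Haken. Because $\pi_1(N)$ contains ${\bf Z}^2$, the torus theorem (Jaco--Shalen, Johannson, Scott) produces an embedded essential torus or Klein bottle $F\subset N$ with $\pi_1(F)$ of finite index in $\pi_1(N)$. Cutting along $F$ gives a Haken manifold whose fundamental group injects into $\pi_1(N)$ as a subgroup of rank $\le 2$; Waldhausen's rigidity for Haken manifolds with surface fundamental group then identifies each complementary piece as an $I$-bundle over $F$, and reassembly shows $N$ is globally an $I$-bundle or twisted $I$-bundle over a surface of zero Euler characteristic, necessarily $T^2$ or $K$. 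This forces $\pi_1(N)\in\{{\bf Z}^2,\pi_1(K)\}$ in agreement with Step~1.

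For the orientable case, $K\times I$ is excluded as non-orientable. If $\pi_1(N)={\bf Z}^2$ the only surviving candidate is $T^2\times I$; if $\pi_1(N)=\pi_1(K)$ the manifold must be the orientable twisted $I$-bundle $(T^2\times I)/(x,t)\sim(\tau(x),1-t)$, in which the zero-section $T^2/\tau\cong K$ is non-orientable and hence one-sided in the orientable $N$. The main obstacle is Step~2: producing the embedded essential torus/Klein bottle and promoting the group-theoretic data to a homeomorphism type requires the torus theorem plus Waldhausen's theorem, whereas Step~1 and Step~3 are essentially formal.
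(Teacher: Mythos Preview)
Your Step~1 contains a false claim: ``torsion-freeness forces every non-trivial element of $Q$ to act without fixed points on $A$.'' Take $\pi_1(K)=\langle a,b\mid aba^{-1}=b^{-1}\rangle$ with $A=\langle a^2,b\rangle\cong{\bf Z}^2$; the generator of $Q\cong{\bf Z}/2$ acts by $\left(\begin{smallmatrix}1&0\\0&-1\end{smallmatrix}\right)$, which fixes $a^2$. The conclusion that torsion-free virtually ${\bf Z}^2$ groups are exactly ${\bf Z}^2$ and $\pi_1(K)$ is correct, but it follows from the Bieberbach classification (or the fact that such groups are $\mathrm{PD}^2$ and hence surface groups), not from the fixed-point assertion you wrote. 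Step~2 also has real gaps: you never show $\partial N\neq\emptyset$, you assert $N$ is Haken without verifying boundary incompressibility, and the torus theorem can return ``Seifert fibered'' rather than an embedded torus, a branch you do not address.

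The paper's proof is considerably more elementary and sidesteps all of this. It first rules out $N$ closed (a finite cover would be closed irreducible with $\pi_1\cong{\bf Z}^2$, impossible). It then shows $\partial N$ is incompressible via the loop theorem: a compressing disc would exhibit $\pi_1(N)$ as a nontrivial free product, incompatible with containing ${\bf Z}^2$ of finite index. Each boundary component $F$, being incompressible with $\pi_1(F)$ embedding in a virtually ${\bf Z}^2$ group, must be a torus or Klein bottle, and in particular $\pi_1(F)$ has finite index in $\pi_1(N)$. Hempel's Theorem~10.5 then applies directly and says $N$ is $F\times I$ or a twisted $I$-bundle. This is essentially the $I$-bundle recognition you attribute to ``Waldhausen rigidity,'' but applied to a boundary component rather than to an internal torus manufactured by the torus theorem; the crystallographic Step~1 becomes unnecessary, since the $I$-bundle structure already yields $\pi_1(N)\in\{{\bf Z}^2,\pi_1(K)\}$. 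Your Step~3 is fine and agrees with the paper.
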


\begin{proof}{}
Suppose first that $N$ is closed. Then take a finite cover $N'$
with $\pi_1(N')  = {\bf Z}^2$. Since the finite cover is irreducible, this is not
possible \cite{He}. Hence $\partial N$ is not empty.
Suppose that boundary of $N$ is compressible. By the loop theorem
\cite{He} there is a curve in $\partial N$, not null homotopic in 
$\partial N$,
but bounding an embedded disc $D$ in $N$. Cutting along D, shows that 
$\pi_1(N)$ is either a free product or an amalgamated free product along
a trivial group, hence a free product with ${\bf Z}$.
In either case the free product would either not contain a ${\bf Z}^2$ (it would
be infinite cyclic) or would contain a free group of rank $\geq 2$, in which case 
it could not contain ${\bf Z}^2$ with finite index.
	Hence $\partial N$ is incompressible. If it has a component of genus
$\geq 2$ then as above it would have a rank $2$ free subgroup, again contradiction.
If it has a component which is a projective plane, then $\pi_1(N)$ has elements
of order $2$, contrary to hypothesis. Since $N$ is irreducible, no component
of $\partial N$ is a sphere, as $\pi_1(N)$ is not trivial. We conclude that
every boundary component of $N$ is either a torus or a Klein bottle.

Let $F$ be one such component. Because $F$ is incompressible and $\pi_1(N)$ has
a finite index subgroup isomorphic to ${\bf Z}^2$, 
then $\pi_1(F)$ has finite index in $\pi_1(N)$.
By theorem 10.5 of \cite{He}, either i) $\pi_1(N) = {\bf Z}$, or
ii) $\pi_1(N) = \pi_1(F)$ with $N \cong F \times I$  or
iii) $\pi_1(F)$ has index $2$ in $\pi_1(N)$ and $N$ is a twisted $I$-bundle over
a compact manifold $F'$, with $F$ the associated $0$-sphere bundle.
In our situation case i) cannot happen. In case ii) $\pi_1(N)$ is either
${\bf Z}^2$ or $\pi_1(K)$ and we are done.
In case iii) $\pi_1(N)$ is isomorphic to $\pi_1(F')$ as there is 
a deformation retract from $N$ to $F'$.
Here $F'$ is a closed surface which has a double cover either the torus
or the Klein bottle. Hence again $F'$ is the torus or the Klein bottle and
we also conclude that $\pi_1(N)$ is either ${\bf Z}^2$ or $\pi_1(K)$.
The last stament is easy given the above.
This finishes the proof of the lemma.
\end{proof}

Note that both the torus and the Klein bottle have double covers
homeomorphic to themselves. The manifolds in question above can be either
orientable or not.
It is easy to construct a compact manifold $N$ which is a twisted $I$-bundle
over the Klein bottle (with quotient surface a Klein bottle). This
manifold has boundary a Klein bottle and an orientation double cover $N_2$
which is a twisted $I$-bundle over the torus (with quotient surface
a Klein bottle, which is one sided in $N_2$). Finally $N$ has
an order $4$ cover homeomorphic to $T^2 \times I$.

\begin{lemma}{}{}
Suppose that ${\mathcal C}$ is a bi-infinite chain of lozenges. Let $G$ be the
stabilizer of ${\mathcal C}$ in $\pi_1(M)$.
Then $G$ is isomorphic to a subgroup of $\pi_1(K)$.
In particular, $G$ is torsion free
and it contains a unique maximal abelian subgroup of index at most 2, which 
is either trivial, 
(infinite) cyclic or  isomorphic to ${\bf Z}^2$.\label{stachain}
\end{lemma}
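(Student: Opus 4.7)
The plan is to exploit the action of $G$ on the bi-infinite sequence of lozenges. Since consecutive lozenges in $\mathcal{C}=(R_i)_{i\in\mathbb{Z}}$ share a corner, any element of $G$ permutes the $R_i$ while preserving the adjacency structure of $\mathbb{Z}$, so we obtain a homomorphism $\rho\colon G\to\mathrm{Aut}(\mathbb{Z})=D_\infty$ into the infinite dihedral group. The first step is to analyze the kernel $H:=\ker\rho$: elements of $H$ preserve each lozenge individually, hence by Proposition~\ref{pro:treefacts}(4) fix each corner $\alpha_i$ individually; since the stabilizer of any single corner in $\pi_1(M)$ is cyclic, $H$ is cyclic. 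Using that $\widetilde M\cong\mathbb{R}^3$ makes $M$ aspherical, $\pi_1(M)$ is torsion-free, so $G$ is torsion-free and $H$ is either trivial or infinite cyclic.

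Next I would study the ``orientation-preserving'' subgroup $G^+:=\rho^{-1}(\mathbb{Z})\subset G$, where $\mathbb{Z}\subset D_\infty$ is the translation subgroup; then $[G:G^+]\leq 2$. The extension $1\to H\to G^+\to\rho(G^+)\to 1$ has both ends cyclic, and conjugation of $H$ by any lift of a generator of $\rho(G^+)$ must act by $\pm 1$. A direct enumeration of the four combinations of $H\in\{\{1\},\mathbb{Z}\}$ and $\rho(G^+)\in\{\{1\},\mathbb{Z}\}$ (with the two possible conjugation actions) shows that $G^+$ is isomorphic to one of $\{1\}$, $\mathbb{Z}$, $\mathbb{Z}^2$, or $\pi_1(K)$, the Klein bottle group arising precisely when the action is by $-1$. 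Each of these groups embeds in $\pi_1(K)$.

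It remains to handle the case $[G:G^+]=2$: choose a lift $r$ of a reflection, observe that $r^2\in H$ and $rhr^{-1}=h^{\pm 1}$ (where $h$ generates $H$), and use the torsion-freeness of $G$ to identify $G$. The case I expect to be the main obstacle is $G^+=\pi_1(K)$: combining the defining relation $tht^{-1}=h^{-1}$ with either choice of conjugation action of $r$ on $h$ forces either $h^{2b}=1$ (when $rhr^{-1}=h^{-1}$ and $r^2=h^b$ with $b\neq 0$) or an order-$2$ element $s=tr$ (when $rhr^{-1}=h$), both contradicting torsion-freeness. So this case cannot occur. The remaining cases yield $G\cong\mathbb{Z}$ (when $G^+=\mathbb{Z}$) and $G\cong\pi_1(K)$ (when $G^+=\mathbb{Z}^2$, via an analogous but this time consistent computation). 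In every case $G$ embeds in $\pi_1(K)$, and the unique maximal abelian subgroup of $G$ of index at most $2$ is obtained as $G\cap A$ where $A\cong\mathbb{Z}^2$ is the unique maximal abelian subgroup of $\pi_1(K)$.
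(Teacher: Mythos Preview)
Your approach is sound and takes a genuinely different, purely group-theoretic route compared to the paper. Both proofs begin identically: the action of $G$ on the linearly ordered set of lozenges gives $\rho\colon G\to D_\infty$ with cyclic kernel $H$, and the orientation-preserving subgroup $G^+$ is a split cyclic-by-cyclic extension, hence one of $\{1,\mathbb{Z},\mathbb{Z}^2,\pi_1(K)\}$. The divergence comes when $[G:G^+]=2$. The paper observes (using that the orientation-reversing generator $s$ satisfies $s^2\neq 1$ by torsion-freeness) that $G$ has a subgroup isomorphic to $\mathbb{Z}^2$ of index $\leq 4$, then invokes $3$-manifold topology: passing to the cover of $M$ with group $G$, taking a Scott compact core, and applying Lemma~\ref{fund} (irreducibility, the loop theorem, and a theorem of Hempel) yields $G\cong\mathbb{Z}^2$ or $\pi_1(K)$ in one stroke. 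Your route avoids this machinery by direct case analysis, which is more elementary and self-contained; the elimination of $G^+\cong\pi_1(K)$ by the torsion argument on $tr$ is clean and correct.

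The one place where your write-up is too optimistic is the case $G^+\cong\mathbb{Z}^2$ with $[G:G^+]=2$: the phrase ``analogous but this time consistent computation'' hides the most involved step of the whole proof. With $r^2=h^b$ and conjugation by $r$ on $G^+$ given by $\left(\begin{smallmatrix}\epsilon & a\\ 0 & -1\end{smallmatrix}\right)$ in the basis $(h,t)$, you must (i) rule out $\epsilon=-1$ (order~$2$ forces $a=0$, then $rh^br^{-1}=h^{-b}=h^b$ gives $b=0$, so $r^2=1$), (ii) for $\epsilon=1$, reduce $a$ modulo $2$ by replacing $t$ with $th^c$, and rule out $a$ odd (replacing $r$ by $t^dr$ adjusts $b$ freely, giving $r^2=1$), (iii) for $a=0$, rule out $b$ even (else $(rh^{-b/2})^2=1$), and (iv) for $b$ odd, note $\langle r,h\rangle\cong\mathbb{Z}$ with generator $g$ satisfying $g^2=h$, check $gtg^{-1}=t^{-1}$, and conclude $G=\langle g,t\rangle\cong\pi_1(K)$. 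Alternatively you could cite the classification of torsion-free virtually-$\mathbb{Z}^2$ groups (two-dimensional Bieberbach), but that is no lighter than the paper's Lemma~\ref{fund}. Either way, this case needs to be written out.
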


\begin{proof}{}
The proof will reveal the structure of the stabilizer of $\mathcal C$ and
not just show that it is isomorphic to a subgroup of $\pi_1(K)$.
In this proof cyclic means infinite cyclic.
Let $\alpha$ be a corner in $\mathcal C$.
The chain $\mathcal C$ corresponds to a linear subtree $T_0$ of the tree ${\mathcal G}(\alpha)$.
It defines a homomorphism $\rho: G \to \mbox{Aut}(T_0)$.
%
The kernel $\mathcal K$ of $\rho$ stabilizes every corner
of $\mathcal C$, and thus, is either cyclic or trivial. 

Assume first that $G$ preserves the orientation on $T_0$.
Then $\rho(G)$ is a group of translations along $T_0$, ie. trivial or cyclic. 
In the former case, $G=\mathcal K$ is either trivial or
cyclic. 
In the latter case, if $\mathcal K$ is trivial then 
$G$ is isomorphic to $\rho(G)$ and hence trivial
or cyclic. If $\mathcal K$ is cyclic then
$G$ is an extension of ${\bf Z}$ by ${\bf Z}$.
It is an elementary fact that any such extension splits and hence
$G$ is either ${\bf Z}^2$ or $\pi_1(K)$.
We are done.

Hence from now on assume that some
element $g$ of $G$ reverses the orientation of $T_0$. 
Hence $g$ leaves either a vertex or an edge of
$T_0$ invariant.
Then, according to proposition \ref{pro:treefacts} item 4, $g$ preserves a corner $\alpha$.
That is $g$ does not act as a reversion on an edge.
Let $s$ be a generator of the $G$-stabilizer of $\alpha$ $-$ in particular this
stabilizer is not the identity and is isomorphic to 
${\bf Z}$. Then $s$ reverses the orientation of $T_0$ (otherwise
all elements in $G$ leaving $\alpha$ invariant would preserve orientation)
and $s^2$ is in $\mathcal K$. On the other hand, every element of $\mathcal K$ fixes $\alpha$ and 
preserves the orientation: it
must be a power of $s^2$, which therefore generates $\mathcal K$. 
As before there are two options for $\rho(G)$. One option is that $\rho(G) = \rho(<s>)$ and
therefore $G$ is generated by $s$ and is cyclic.
Otherwise $\rho(G)$ has at least one translation.
Select $h$ in $G$ such that $\rho(h)$ is a translation
along $T_0$ of minimal length. 
In this case it is easy to see that 
$s$, $h$ generate $G$.

By considering the action on the set of vertices of $T_0$ one sees that
$hsh$ preserves $\alpha$. It is also in $G$ so
$hsh = s^i$ where $i$ is odd. 
Similarly $h^{-1} s h^{-1} = s^j$, $j$ odd. 
Now we use $3$-manifold
topology.

Let $G'$ be the subgroup of $G$ preserving the orientation on $T_0$.
The first case of the proof shows that $G'$ has a subgroup of order $\leq 2$ isomorphic
to ${\bf Z}^2$, so $G$ has a subgroup of order $\leq 4$ isomorphic to ${\bf Z}^2$.
(we stress that we want a subgroup of order $2$ isomorphic to ${\bf Z}^2$, so more
work is needed).
Let $U$ be the cover of $M$ associated to $G$.
Then $U$ is irreducible and $\pi_1(U)$ is torsion free, because $\pi_1(M)$ is
torsion free $-$ as its universal cover is homeomorphic to $\rrrr^3$.
By Scott's core theorem \cite{He} there is a compact
core $N$ for  $U$. We can assume that no boundary component of $N$ is a sphere - by
attaching $3$ balls to such components, without affecting  the fundamental
group. Now apply the previous lemma to show that $G = \pi_1(N)$ is isomorphic to either
${\bf Z}^2$ or $\pi_1(K)$. 

Finally if $G$ is not abelian then $G$ is isomorphic to $\pi_1(K)$
and it is an elementary algebra fact that $G$ has a unique maximal
abelian subgroup of index $2$, which is isomorphic to ${\bf Z}^2$.
\end{proof}

\noindent
{\bf {Remark}} $-$ Consider the most complicated case ($G$ contains a ${\bf Z}^2$ 
subgroup). Using graphs of groups one can quickly produce
a short exact sequence $1 \rightarrow {\bf Z} \rightarrow G 
\rightarrow A \rightarrow 1$, where $A$ is either ${\bf Z}$ or $D_{\infty}$,
the infinite dihedral group. The difficulty occurs in the dihedral 
group case: in particular in our situation the exact sequence does
not split, for otherwise $G$ would have torsion. Hence the graph of
groups analysis gets more involved and has many possibilities.
\vskip .05in

%

Conversely:

\begin{lemma}\label{le:Z2}
Let $G$ be a subgroup of $\pi_1(M)$ isomorphic to ${\bf Z}^2$. Assume
that $\Phi$ is not product. Then $G$ preserves a bi-infinite chain of
lozenges. 
\end{lemma}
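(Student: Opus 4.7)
\textbf{Proof plan for Lemma \ref{le:Z2}.}

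Since $\Phi$ is not product, the result of \cite{Fe5} cited in the introduction forbids $G \cong {\bf Z}^{2}$ from acting freely on $\oo$: otherwise $\Phi$ would be topologically equivalent to a suspension Anosov flow, hence product by Proposition~\ref{susp}. So some non-trivial element of $G$ fixes an orbit of $\wwp$. By Proposition~\ref{pro:treefacts}(1) I may replace this element by a generator of its primitive cyclic factor in $G$, so that I have a basis $\{g,h\}$ of $G$ with $g$ fixing some $\alpha \in \oo$. Consider the fat tree $\mathcal{G}(\alpha)$.

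Next I must make $h$ act on $\mathcal{G}(\alpha)$. By Proposition~\ref{pro:treefacts}(2) some positive power $g^{p}$ acts trivially on $\mathcal{G}(\alpha)$, so $\mathcal{G}(\alpha)=\mathcal{G}(g^{p})$; since $G$ is abelian, $h \in Z(g^{p})$, and Proposition~\ref{pro:treefacts}(3) then produces a simplicial action of $h$ on $\mathcal{G}(\alpha)$. Let $T := \mathcal{G}(g) \subseteq \mathcal{G}(\alpha)$ be the connected subtree of $g$-fixed orbits; because $g$ and $h$ commute, $h(T)=T$. The $\pi_1(M)$-stabilizer of any orbit of $\wwp$ is cyclic (background section), so $h$ cannot fix any vertex of $T$ --- that would embed $G \cong {\bf Z}^{2}$ into a cyclic group --- and by Proposition~\ref{pro:treefacts}(4) $h$ cannot preserve a lozenge while swapping its corners, so $h$ acts on $T$ without inversions. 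Furthermore, $\alpha \neq h(\alpha)$ (again, equality would place $G$ inside the cyclic stabilizer of $\alpha$), so Theorem~\ref{chain} joins $\alpha$ and $h(\alpha)$ by a chain of lozenges lying entirely in $T$, and the $h$-translates of this chain make $T$ infinite.

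Since $h$ acts freely and without inversions on the simplicial tree $T$, the standard Bass--Serre fact that a free ${\bf Z}$-action on a tree admits a translation axis produces an $h$-invariant bi-infinite embedded path $\mathcal{A}\subset T$. The vertices of $\mathcal{A}$ are corners of lozenges and its edges are lozenges, with consecutive edges sharing a corner, so $\mathcal{A}$ is precisely a bi-infinite chain of lozenges $\mathcal{C}$. This $\mathcal{C}$ is $G$-invariant: since $\mathcal{A}\subset\mathcal{G}(g)$, the element $g$ fixes each corner and hence each lozenge of $\mathcal{C}$, while $h$ permutes the lozenges by translation along $\mathcal{A}$. This gives the required $G$-invariant bi-infinite chain.

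The most delicate step is letting $h$ act on $\mathcal{G}(\alpha)$ when $g$ itself may act non-trivially on it; this is precisely the content of the pseudocentralizer mechanism of Proposition~\ref{pro:treefacts}(3). Freeness of the $h$-action on $T$ and the absence of inversions --- both needed to invoke the classical axis theorem inside the subtree $T$ --- follow respectively from cyclicity of orbit stabilizers and from Proposition~\ref{pro:treefacts}(4); everything else is bookkeeping.
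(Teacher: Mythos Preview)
Your proof is correct and follows essentially the same approach as the paper's: reduce to a primitive element $g$ with a fixed point via Proposition~\ref{pro:treefacts}(1), work in the tree $\mathcal{G}(g)$ on which $G$ acts because it is abelian, show $h$ acts freely without inversions (using cyclicity of orbit stabilizers and Proposition~\ref{pro:treefacts}(4)), and take the translation axis as the desired chain. The only cosmetic difference is that you route the action of $h$ through $\mathcal{G}(\alpha)$ and the pseudocentralizer mechanism before restricting to $\mathcal{G}(g)$, whereas the paper goes directly to $\mathcal{G}(g)$; your freeness argument (that a common fixed point of $g$ and $h$ would embed $\mathbf{Z}^2$ into a cyclic stabilizer) is in fact slightly cleaner than the paper's common-power argument.
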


\begin{proof}
If $G \sim {\bf Z} \oplus {\bf Z}$ acts freely on the orbit space $\oo$, then 
it was proved in \cite{Fe5} that $\Phi$ is product, contrary to hypothesis.
Hence there is $g$ in $G$ with a fixed point in $\oo$.
If $g = (g')^n$ where $g'$ is in $G$ and $|n| > 1$, then
$g'$ also does not act freely on $\oo$ (Proposition \ref{pro:treefacts}, item 1.).
Hence we may assume that $g$ is indivisible in $G$.
Choose $h$ in $G$ so that $h, g$ form a basis of $G$.
Consider the tree ${\mathcal T}={\mathcal G}(g)$: since $G$ is 
abelian, then $G$ acts on ${\mathcal T}$.
If $f$ is an element
of $G$ admitting a fixed point  in $\mathcal T$, then 
some power of $f$ leaves invariant all vertices
of $\mathcal T$ and likewise for $g$.
It follows that
$g$ and $f$
admit a common power: $g^p=f^q$. Since $f, g$ are in $G \cong {\bf Z}^2$ then
$f, g$ generate a cyclic group. But  $g$ is indivisible in $G$, implying 
that $f$ is a power of $g$. Hence, $G/\langle g \rangle \sim {\bf Z}$ is a cyclic
group acting freely on the vertices of the tree $\mathcal T$. 
According to Proposition \ref{pro:treefacts}, item 4., an element
in $G/ <g>$ cannot 
reverse an edge of $\mathcal T$. It follows that $G/\langle g \rangle$ acts freely on
$\mathcal T$, and that there is an invariant axis for this cyclic group therein. 
It provides a bi-infinite
$G$-invariant chain of lozenges $\mathcal C$.
In particular the arguments show that $g$ fixes all the vertices in ${\mathcal C}$.
\end{proof}

\begin{define}\label{def:scallopedregion}{(\cite{Fe5})}
Let $\mathcal C$ be a s-scalloped bi-infinite chain of lozenges. The s-scalloped region
defined by $\mathcal C$ is the union of all lozenges in $\mathcal C$ with the 
half-leaves of $\wlu$
common to two adjacent lozenges in $\mathcal C$. One defines similarly u-scalloped regions.
A scalloped region is a s-scalloped or u-scalloped region; it is an open subset of $\oo$.
\end{define}

It may happen in the situation of lemma \ref{le:Z2} that the $G$-invariant chain
is not unique, but only in a very special situation:

\begin{lemma}\label{le:uniqueC}
Let $G$ be a subgroup of $\pi_1(M)$ isomorphic to ${\bf Z}^2$. Assume that
$G$ preserves two different chains of lozenges. Then, one these chains is
s-scalloped, and the other is u-scalloped. Moreover the associated
u-scalloped and
s-scalloped regions are the same, that is, they are the same
subset of the orbit space $\oo$.
\end{lemma}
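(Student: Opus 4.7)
The plan is to locate a canonical element $g \in G$ fixing every corner of both chains, reduce the question to the analysis of a single tree $\mathcal{G}(\alpha)$, and then deduce the scalloped dichotomy from a local sector analysis around a common corner.

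First I would identify a distinguished indivisible $g \in G$ with a fixed point in $\oo$; its existence follows as in Lemma \ref{le:Z2}, and its essential uniqueness up to inverse follows from the same proof (any element of $G$ with a fixed orbit shares a common power with $g$ and so must be a power of $g$). The claim is that $g$ fixes every corner of $\mathcal{C}_1$ and of $\mathcal{C}_2$. For each chain $\mathcal{C}_i$ the induced $G$-action on its sequence of corners cannot be faithful, since $G \cong {\bf Z}^2$ does not embed as a discrete translation group of the integers; the kernel is then a non-trivial subgroup whose elements have infinitely many fixed corners, hence a subgroup of $\langle g \rangle$ (up to sign, by the uniqueness above). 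Combined with Proposition \ref{pro:treefacts} item 1, $g$ itself fixes every corner of $\mathcal{C}_i$, and since a lozenge is determined by its corners, $g$ preserves every lozenge of both chains. Proposition \ref{pro:treefacts} item 4 then gives that $g$ acts trivially on the whole tree $\mathcal{G}(\alpha)$ for any such corner $\alpha$, and by Theorem \ref{chain} both $\mathcal{C}_1$ and $\mathcal{C}_2$ are contained in this single tree.

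Next let $h$ be any element such that $G = \langle g, h \rangle$. Since $h \notin \langle g \rangle$ and the $\pi_1(M)$-stabilizer of any corner is cyclic, $h$ has no fixed orbit in $\oo$ and therefore acts hyperbolically on the tree $\mathcal{G}(\alpha)$, with a unique axis. Both $\mathcal{C}_1$ and $\mathcal{C}_2$, viewed as $h$-invariant bi-infinite geodesic paths in this tree, must coincide with this axis as sequences of vertices; that is, they share the same bi-infinite corner sequence $(\alpha_k)_{k \in {\bf Z}}$. Consequently the two chains can differ only in their choice of the lozenge joining $\alpha_k$ to $\alpha_{k+1}$, equivalently in the pair of adjacent sectors used at each $\alpha_k$.

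Finally, at each common corner $\alpha_k$ the four local sectors defined by $\ws(\alpha_k)\cup\wu(\alpha_k)$ split the possible adjacent lozenge pairs into two types: those sharing an unstable half-leaf (s-type) and those sharing a stable half-leaf (u-type). Because $h$ preserves the orientation of $\oo$ and translates the chain, the chosen type is constant along each chain; and since $\mathcal{C}_1 \neq \mathcal{C}_2$, one must be entirely s-type, hence s-scalloped, and the other entirely u-type, hence u-scalloped. For the equality of regions, one verifies that at each $\alpha_k$ the two lozenges contributed by $\mathcal{C}_1$ together with the two from $\mathcal{C}_2$ fill all four sectors, and that the unstable half-leaves shared by the s-chain precisely fill the gaps left by the lozenges of the u-chain and conversely; combined with Definition \ref{def:scallopedregion}, this gives the same open subset of $\oo$. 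The main obstacle I anticipate is making rigorous this last region-identification step from the local sector description using the perfect fit relations, and in particular dealing uniformly with the situation at any possibly singular corner in the chains.
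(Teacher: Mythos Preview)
Your argument has a fundamental gap at the outset: the claim that there is a \emph{single} indivisible $g\in G$ fixing every corner of \emph{both} chains is false in general, and the justification you give for it is circular. You appeal to the proof of Lemma~\ref{le:Z2} for the ``essential uniqueness'' of $g$, but that proof only shows that an element of $G$ with a fixed vertex \emph{in the tree} $\mathcal{G}(g)$ must be a power of $g$; it says nothing about elements of $G$ having fixed orbits elsewhere in $\oo$. In fact the paper's own proof shows exactly the opposite of what you assume: when $\mathcal C\neq\mathcal C'$, the element $g$ fixing the corners of $\mathcal C$ and the element $f$ fixing the corners of $\mathcal C'$ have no common non-trivial power, and (after passing to a finite-index subgroup) form a basis of $G$. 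The corners of $\mathcal C$ lie in $\mathcal G(g)$, the corners of $\mathcal C'$ lie in $\mathcal G(f)$, and these are genuinely different trees with disjoint vertex sets.

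There is a second, independent problem: even granting your claim that the two chains share the same corner sequence $(\alpha_k)$, your conclusion that ``the two chains can differ only in their choice of the lozenge joining $\alpha_k$ to $\alpha_{k+1}$'' is impossible. Given two orbits $p,q$, the orbit $q$ lies in a unique sector of $\mi - (\ws(p)\cup\wu(p))$, and any lozenge with corners $p,q$ must occupy that sector; hence there is at most one such lozenge. (This is also why $\mathcal G(\alpha)$ is a genuine tree rather than a multigraph.) So two distinct chains cannot have identical corner sequences, and your sector dichotomy in the final paragraph never arises.

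The paper's argument is structurally quite different. It first rules out the possibility that $f$ and $g$ share a power, then takes a path in $\oo$ from a corner $\beta$ of $\mathcal C'$ into the interior of $\mathcal C$ and studies its intersection with the stable/unstable sides of the lozenges of $\mathcal C$. A finite intersection would produce a unique $f$-invariant separating line and hence an $f$-fixed corner of $\mathcal C$, contradiction; so the intersection accumulates, and an unstable (or stable) leaf through the accumulation point crosses infinitely many sides of $\mathcal C$, forcing $\mathcal C$ to be scalloped. Finally, the equality of regions is obtained not by a local sector analysis but by invoking the results of \cite{Fe5}: the $G$-invariant sublines $I^s\subset\hhs$ and $I^u\subset\hhu$ determined by a scalloped region are the unique axes of a freely acting element $h\in G$, so the two scalloped regions coincide; since they are equal as regions but distinct as chains, one must be $s$-scalloped and the other $u$-scalloped.
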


\begin{proof}
In this proof we consider all objects in $\oo$.
Let  $\mathcal C$, ${\mathcal C}'$ be two different $G$-invariant 
chains of lozenges. 
First of all notice that the set of vertices of a chain is a linear
set isomorphic to a subset of ${\bf Z}$ and the stabilizer of any
vertex is at most ${\bf Z}$. Since $G \cong {\bf Z}^2$ there is
an element of $G$ acting freely on the set of vertices of ${\mathcal C}$
(or ${\mathcal C}'$). It follows that ${\mathcal C}, {\mathcal C}'$ are
bi-infinite chains of lozenges.

Let $g$
be an element of $G$ fixing every corner 
in $\mathcal C$, and let $f$ be an element of
$G$ fixing every corner of ${\mathcal C}'$ $-$ see proof of the previous lemma. 
Suppose first that $g$ and $f$ share a common non-trivial power:
$g^p=f^q$, $p,q \not= 0$. 
Since $G$ is abelian it acts on ${\mathcal G}(g^p)$ and also ${\mathcal G}(g)
\subset {\mathcal G}(g^p)$, so ${\mathcal C}$ is an invariant axis for
$G$ acting on ${\mathcal G}(g^p)$. Similarly $\mathcal C'$ is 
a $G$-invariant axis in ${\mathcal G}(f^q)$. Since these trees are the same,
it now follows that ${\mathcal C} = {\mathcal C}'$, contradiction.


Hence, replacing $G$ by a finite index subgroup if necessary, one can assume
that $f$, $g$ form a basis of $G \sim {\bf Z}^2$.

Let $\beta$ be a corner of $\mathcal C'$. We claim that $\beta$ cannot be 
in $\mathcal C$ or in one of its boundary sides. Suppose not. There is $h$ non trivial
in $G$ fixing $\beta$ and therefore fixing every corner
of $\mathcal C'$. As $h$ leaves $\mathcal C$ invariant, then
$\beta$ has to be a corner of $\mathcal C$.
This would produce an element in $G$ fixing every corner of $\mathcal C$ and
every corner of $\mathcal C'$
and hence some powers of $f$ and $g$ coincide.
The previous paragraph shows this is impossible.
Let now $c$ be a path in $\oo$
joining $\beta$ to an element $\delta$ in the union
of the lozenges in $\mathcal C$, and disjoint from the corners of $\mathcal C$. 
We assume that $c$ avoids the singular orbits in $\oo$.
Notice that the union of corners of $\mathcal C$ forms a discrete set in $\oo$.
Consider the intersection $V$ between $c$ and the union
of stable and unstable half-leaves contained in the boundary of 
the lozenges of $\mathcal C$. By the above this intersection is non
empty. Assume first that $V$ is finite. Let $\gamma$
be the first element of $V$ met while traveling along $c$ from $\beta$ to $\delta$.
Then $\gamma$ lies on the boundary of a lozenge $C$ of $\mathcal C$, let's say the
boundary component is a stable
half leaf $L$ containing a corner $\alpha$ of $\mathcal C$. 
Let $C'$ be the other lozenge in $\mathcal C$
admitting also $\alpha$ as a corner: there is a half leaf  $K$, contained in the boundary
of $C'$ and such that the union $L \cup K \cup \alpha$ is an embedded line in
$\oo$, which moreover disconnects $\mathcal C$ from $\beta$. 
In addition this properly embedded line is unique with these properties.
Since $\mathcal C$ and $\beta$
are $f$-invariant, it now follows that $L \cup K \cup \alpha$
is $f$-invariant, and hence $f(\alpha)=\alpha$, where $\alpha$ is a 
corner of $\mathcal C$.
Contradiction.

Therefore, $V$ is not finite: it admits an accumulation point $\gamma$. 
Since $\wls$ and $\wlu$ are transverse outside the singular points,
$\gamma$ is an accumulation point of a sequence $F_n \cap c$, 
where the $F_n$ are leaves in the boundary of lozenges in
${\mathcal C}$. In addition we may assume that all 
$F_n$ have all the same type, for example all $F_n$ are
leaves of $\wls$. 
Let $L$ be the leaf of $\wlu$ through $\gamma$: it intersects all the $F_n$
for $n$ sufficiently big. It follows that $\mathcal C$ contains an infinite u-scalloped subchain. 
Since $\mathcal C$ is $G$-invariant, 
the entire chain $\mathcal C$ has to be a bi-infinite u-scalloped chain. 
Hence it defines
a u-scalloped region $U$.

Similarly, ${\mathcal C}'$ has to be scalloped, and defines a scalloped region $U'$. 

Now the key point is the following: in \cite{Fe5} the following facts are shown:
i) We can choose $h$ in $G$
acting freely on $\oo$;
ii) The leaves of $\wls$ (respectively $\wlu$) intersecting $U$ defines 
a $G$-invariant subline $I^s$ in $\hhs$
(respectively a $G$-invariant subline $I^u$ in $\hhu$);
iii) Every leaf in $I^s$ intersects every leaf in $I^u$,
and this intersections occurs in $U$;
iv) Every point in $U$ is the intersection of a leaf in $I^s$ and
a leaf in $I^u$.

Similarly, the open scalloped region $U'$ provide $G$-invariant sublines $J^s$, $J^u$ in $\hhs$, $\hhu$, such that
every leaf in $J^s$ intersects every leaf in $J^u$ at a point in $U'$. But since $h$ acts freely, $h$-invariant lines in
$\hhs$, $\hhu$ are unique \cite{Fe5}. Thus, $I^s=J^s$ and $I^u=J^u$. The equality $U=U'$ follows.

If the chain ${\mathcal C}'$ was u-scalloped, as $\mathcal C$, 
then it would be equal to $\mathcal C$ since it defines the same
scalloped region. Hence, ${\mathcal C}'$ is s-scalloped. The lemma follows.
\end{proof}

\begin{corollary}
\label{cor:uniqueC}
Let $G$ be a subgroup of $\pi_1(M)$ isomorphic to ${\bf Z}^2$ and $h$ an element of
$\pi_1(M)$ such that $hG'h^{-1}=G'$, where $G'$  is a finite index subgroup of $G$. 
Then $h$ preserves any $G$-invariant chain of lozenges.
\end{corollary}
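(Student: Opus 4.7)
The plan is to reduce the statement to Lemma~\ref{le:uniqueC} applied to the subgroup $G'$ and the pair of chains $\mathcal{C}$ and $h(\mathcal{C})$, where $\mathcal{C}$ is an arbitrary $G$-invariant chain of lozenges. First I would observe that $G'$, being a finite-index subgroup of $G \cong {\bf Z}^2$, is itself isomorphic to ${\bf Z}^2$, and that $\mathcal{C}$ is automatically $G'$-invariant because $G' \subset G$.

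Next I would verify that $h(\mathcal{C})$ is also $G'$-invariant. The condition $hG'h^{-1}=G'$ rewrites as $h^{-1}G'h=G'$, so for every $g' \in G'$ one has $h^{-1}g'h \in G'$, and hence
$$g'(h(\mathcal{C}))=h\bigl((h^{-1}g'h)(\mathcal{C})\bigr)=h(\mathcal{C}).$$
At this point I would invoke Lemma~\ref{le:uniqueC} with the ${\bf Z}^2$-subgroup $G'$ and the two $G'$-invariant chains $\mathcal{C}$ and $h(\mathcal{C})$: if they were distinct, the lemma would force one of them to be $s$-scalloped and the other to be $u$-scalloped.

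The decisive step, which I view as the key observation, is that $h \in \pi_1(M)$ acts as a deck transformation on $\mi$ and therefore preserves each of the foliations $\wls$ and $\wlu$ individually, rather than swapping them. Consequently $h$ sends $s$-scalloped chains to $s$-scalloped chains and $u$-scalloped chains to $u$-scalloped chains, so $\mathcal{C}$ and $h(\mathcal{C})$ necessarily have the same scalloped type (or are both non-scalloped). This directly contradicts the dichotomy produced by Lemma~\ref{le:uniqueC}, forcing $h(\mathcal{C})=\mathcal{C}$.

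The main thing to double-check is that the hypotheses of Lemma~\ref{le:uniqueC} genuinely apply. If the flow $\Phi$ is product (a suspension Anosov flow), then the orbit space contains no perfect fits and hence no lozenges, so the statement holds vacuously; otherwise Lemma~\ref{le:Z2} ensures that the relevant chains exist and are bi-infinite, and the above argument goes through without further obstruction.
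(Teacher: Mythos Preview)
Your proof is correct and follows essentially the same route as the paper: both arguments pass to the finite-index subgroup $G'\cong{\bf Z}^2$, observe that $\mathcal{C}$ and $h(\mathcal{C})$ are both $G'$-invariant, and then invoke Lemma~\ref{le:uniqueC} together with the fact that a deck transformation preserves the stable/unstable types and hence cannot send an $s$-scalloped chain to a $u$-scalloped one. The paper phrases the final step as a case split (non-scalloped versus scalloped) rather than as a contradiction, but the content is identical.
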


\begin{proof}
Let $\mathcal C$ be a $G$-invariant chain of lozenges. Then, $\mathcal C$ is $G'$-invariant,
and $h(\mathcal C)$ is $hG'h^{-1} = G'$-invariant. According to lemma~\ref{le:uniqueC},
if $\mathcal C$ is not scalloped, then $\mathcal C$ is the unique $G'$-invariant chain: hence we have
$h(\mathcal C)=\mathcal C$. 
If not, $\mathcal C$ is scalloped, 
for example suppose that ${\mathcal C}$ is s-scalloped. 
Again by lemma \ref{le:uniqueC}, ${\mathcal C}$  is the unique
s-scalloped $G'$-invariant chain, and since $h(\mathcal C)$ is also s-scalloped, the equality
$h(\mathcal C)=\mathcal C$ follows.
\end{proof}

As a corollary of these results, we get the description of 
pseudo-Anosov flows in manifolds with virtually 
solvable fundamental group (theorem B).

\begin{theorem}{}{}
Let $\Phi$ be a pseudo-Anosov flow in $M^3$ with
$\pi_1(M)$ virtually solvable. Then $\Phi$ has no singularities
and is product.
In particular $\Phi$ is topologically equivalent
 to a suspension
Anosov flow.
\label{solva}
\end{theorem}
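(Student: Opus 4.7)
The plan is to mirror the outline given in the introduction, using the structural results on $\mathbf{Z}^{2}$-actions on the orbit space that have just been established (Lemmas~\ref{le:Z2}, \ref{stachain} and Corollary~\ref{cor:uniqueC}). First I would pass to a finite cover $\widetilde{M} \to M$ so that $\pi_{1}(\widetilde{M})$ is genuinely solvable; the lifted flow $\widetilde{\Phi}$ is still pseudo-Anosov, and since the orbit space $\mathcal{O}$ and the foliations $\wls, \wlu$ depend only on the universal cover, $\Phi$ is product if and only if $\widetilde{\Phi}$ is. Hence by Proposition~\ref{susp} it suffices to prove that $\widetilde{\Phi}$ is product. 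I would therefore assume from now on that $\pi_{1}(M)$ itself is solvable.

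Next, using the derived series of $\pi_{1}(M)$ (torsion free, with universal cover $\mathbf{R}^{3}$) together with standard $3$-manifold facts, one extracts a normal abelian subgroup $G$ of $\pi_{1}(M)$ of rank two: the rank cannot be three because $M$ would then be finitely covered by $T^{3}$, which carries no pseudo-Anosov flow, and rank one can be ruled out by climbing up the derived series inside the solvable group using that a closed aspherical $3$-manifold with virtually cyclic fundamental group does not exist. So I may take $G \cong \mathbf{Z}^{2}$ normal in $\pi_{1}(M)$.

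Now apply Lemma~\ref{le:Z2} to $G$: either $G$ acts freely on $\mathcal{O}$, or $G$ preserves a bi-infinite chain of lozenges $\mathcal{C}$. In the free case, the main result of \cite{Fe5} cited in the proof of Lemma~\ref{le:Z2} says $\Phi$ is product, and by Proposition~\ref{susp} it is topologically equivalent to a suspension Anosov flow, with no singularities, and we are done. I expect the non-free case to be the main obstacle. There, Corollary~\ref{cor:uniqueC} is the decisive tool: because $G$ is normal in $\pi_{1}(M)$, every $h \in \pi_{1}(M)$ conjugates $G$ to itself, hence preserves the (possibly scalloped) $G$-invariant chain $\mathcal{C}$. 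Thus $\pi_{1}(M)$ is contained in the stabilizer of $\mathcal{C}$ in $\pi_{1}(M)$.

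Finally I would invoke Lemma~\ref{stachain}: this stabilizer embeds in $\pi_{1}(K)$, so it is virtually $\mathbf{Z}^{2}$. Consequently $\pi_{1}(M)$ is virtually $\mathbf{Z}^{2}$. But then a finite cover of $M$ is a closed irreducible $3$-manifold with fundamental group $\mathbf{Z}^{2}$ or $\pi_{1}(K)$, which is ruled out by Lemma~\ref{fund} (the closed case is impossible there). This contradiction eliminates the non-free case, so $G$ acts freely, $\Phi$ is product, and the theorem follows. The only subtle point in the whole argument is organizing the dichotomy cleanly and verifying that normality of $G$ really does force invariance of $\mathcal{C}$ even when $\mathcal{C}$ is scalloped (and hence not unique); Corollary~\ref{cor:uniqueC} is designed precisely to handle that subtlety.
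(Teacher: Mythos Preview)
Your proof is correct and follows the same overall strategy as the paper: reduce to the solvable case, locate a normal $\mathbf{Z}^{2}$ subgroup $G$, use Lemma~\ref{le:Z2} and Corollary~\ref{cor:uniqueC} to show that if $\Phi$ were not product then all of $\pi_{1}(M)$ would stabilize a single chain of lozenges, and then contradict Lemma~\ref{stachain}.

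The one place where you diverge from the paper is in producing the normal $\mathbf{Z}^{2}$. The paper does not argue via the derived series; instead, once $\pi_{1}(M)$ is solvable and $M$ is irreducible, it invokes the classical $3$-manifold fact (Hempel) that $M$ fibers over $S^{1}$ with fiber a torus or Klein bottle, passes to a further double cover to make the fiber a torus, and takes $G=\pi_{1}(\mbox{fiber})$. This is cleaner than your sketch: your ``rank one can be ruled out by climbing up the derived series'' step is not actually carried out, and making it precise requires invoking the polycyclic structure and some Hirsch-length bookkeeping that you do not supply. The paper's final contradiction is also phrased via the fibering (a $T^{2}$-bundle over $S^{1}$ cannot have virtually $\mathbf{Z}^{2}$ fundamental group) rather than via Lemma~\ref{fund}, but your version of that last step is fine.
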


\begin{proof}{}
First notice that the fact that each leaf of $\wls$ intersect
every leaf of $\wlu$ is invariant up to taking finite covers
and so is the existence of singularities. Hence we can take
finite covers at will.
Up to a finite cover, one can assume that $\pi_1(M)$ is solvable.
Notice that as $M$ has a pseudo-Anosov flow
then $M$ is irreducible.
Since $\pi_1(M)$ is solvable, 
classical $3$-manifold topology results \cite{He}
imply that $M$ fibers over the circle with
fiber a surface $S$ which has solvable fundamental
group. The surface $S$ can only be the torus or the
Klein bottle $K$. Up to another  finite cover 
one can assume that $S$ is actually the torus. 

Assume that $\Phi$ is not product. Then, according to lemma \ref{le:Z2},
$\pi_1(S)$ preserves a chain of lozenges. Since $\pi_1(S)$
is normal in $\pi_1(M)$, it follows from Corollary~\ref{cor:uniqueC} that
this chain of lozenges is $\pi_1(M)$-invariant. 
According to lemma \ref{stachain}, $\pi_1(M)$ is a finite index extension
of ${\bf Z}$ or ${\bf Z}^2$.
This contradicts the fact that $M$ fibers over the
circle with fiber $T^2$.
This finishes the proof.
\end{proof}

\section{$\pi_1$-injective tori in optimal position}
\label{sec:birk}

Given a $\pi_1$-injective torus, we look for a representative
in its homotopy class which is in optimal position $-$ this means
that it is a union of Birkhoff annuli, which have very important
dynamical meaning. If the initial torus is embedded we want to
study when the optimal position torus is also embedded. This 
is tremendously important if one wants to cut the manifold
along the tori which separate pieces in the torus decomposition.

We first study under 
which conditions a chain of lozenges $\mathcal C$
may admit a corner $\alpha$ such that for some element $g$ of
$\pi_1(M)$ the image $g(\alpha)$ is contained in a lozenge of $\mathcal C$.
Later on we explain how this concerns the intersections of corner
orbits in the Birkhoff annuli with the interior of the annuli.

\begin{define}{}{}
Let $\mathcal C$ be a chain of lozenges.
If for any element $g$ of $\pi_1(M)$ and for every corner $\alpha$ of
$\mathcal C$ then the orbit $g(\alpha)$ is not in 
the interior of a lozenge in ${\mathcal C}$, then
$\mathcal C$ is called {\em simple}. The chain $\mathcal C$ is called
a {\em string of lozenges} if no corner orbit
is singular and consecutive lozenges are never adjacent.\label{def:string}
\end{define}

\begin{proposition}{}{}
Let $G$ be a subgroup of $\pi_1(M)$ isomorphic to ${\bf Z}^2$ and 
let ${\mathcal C}$ be a
$G$-invariant chain of lozenges.
Suppose that $\mathcal C$ is not simple.
Then ${\mathcal C}$ is a string of lozenges. In addition
$G$ is contained in the fundamental group of
a free Seifert fibered piece.
\label{toriem}
\end{proposition}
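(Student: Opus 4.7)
The plan is to derive from the non-simplicity hypothesis both strong constraints on the local structure of $\mathcal{C}$ and global information about the piece of the torus decomposition that houses $G$. Fix a corner $\alpha$ of $\mathcal{C}$ and an element $g \in \pi_1(M)$ such that $g(\alpha)$ lies in the interior of a lozenge $\mathcal{L} \in \mathcal{C}$. Since lozenge interiors contain no singular orbits (Definition~\ref{def:lozenge}) and $g$ preserves the local prong structure, both $g(\alpha)$ and $\alpha$ are regular. The proof of Lemma~\ref{le:Z2} shows that $G$ acts on $\mathcal{C}$ by translation along the linear subtree with at most two $G$-orbits of corners, so by precomposing $g$ with appropriate elements of $G$ one finds, at each corner of $\mathcal{C}$, a $\pi_1(M)$-element sending that corner to the interior of a lozenge of $\mathcal{C}$. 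In particular every corner of $\mathcal{C}$ is regular.

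Next I would show that consecutive lozenges of $\mathcal{C}$ are never adjacent. Assume for contradiction that $\mathcal{L}_1, \mathcal{L}_2 \in \mathcal{C}$ share a common stable half-leaf emanating from their common corner $\beta$. Then $\mathcal{L}_1 \cup \mathcal{L}_2$ together with their shared side already supports a product structure of $\wls$ and $\wlu$ that straddles the shared leaf. Applying the previous step at $\beta$, pick $g' \in \pi_1(M)$ with $g'(\beta)$ interior to some $\mathcal{L}' \in \mathcal{C}$; the leaf $\widetilde{W}^s(g'(\beta))$ then cuts through $\mathcal{L}'$ and meets both of its unstable sides. Iterating this $g'$-equivariance together with the $G$-translations along $\mathcal{C}$, one propagates the extended product structure across arbitrarily many consecutive lozenges of the chain, yielding a genuine stable product region in the sense of Definition~\ref{defsta}. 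By Theorem~\ref{prod} this forces $\Phi$ to be topologically equivalent to a suspension Anosov flow, whose orbit space contains no lozenges (Proposition~\ref{susp}); this contradicts the very existence of $\mathcal{C}$. Hence $\mathcal{C}$ is a string of lozenges.

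For the final statement, $G \cong {\bf Z}^2$ is represented by a $\pi_1$-injective torus, which by the torus decomposition can be homotoped into a Seifert piece $P$ (atoroidal pieces carry no essential tori through them). To see that $P$ must be free, I would argue by contradiction: if $P$ were periodic, the fiber class of $P$ would have a closed-orbit representative, so the indivisible element of $G$ fixing all corners of $\mathcal{C}$ (as produced by Lemma~\ref{le:Z2}) would be, up to a power, the fiber class of $P$; this makes the corner-stabilizer central in $\pi_1(P)$ up to finite index, so via Proposition~\ref{pro:treefacts}(3) and Corollary~\ref{cor:uniqueC} the chain $\mathcal{C}$ is preserved by all of $\pi_1(P)$ and its corners project to the specific closed orbits freely homotopic to the fiber. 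The image of any corner of $\mathcal{C}$ under any element of $\pi_1(M)$ projects to one of these same closed orbits, which in a periodic piece lies on the boundary (not in the interior) of every Birkhoff annulus associated to $\mathcal{C}$. This conflicts with the non-simplicity assumption, so $P$ is free. The main obstacle I anticipate is the second paragraph: extracting a genuine stable product region — rather than merely a local product across one shared side — requires careful verification that the equivariant propagation is compatible along an infinite half of the chain, and it is there that the precise perfect-fit data at each lozenge boundary has to be tracked.
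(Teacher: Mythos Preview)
Your proposal has a genuine gap in the argument that consecutive lozenges are never adjacent. You claim that if $\mathcal{L}_1, \mathcal{L}_2$ share a stable side at $\beta$, then by propagating via $g'$ and the $G$-translations you can manufacture a stable product region and invoke Theorem~\ref{prod}. But adjacent lozenges do \emph{not} yield product regions: in an $s$-scalloped pair the stable leaves cross both lozenges, yet the unstable half-leaves are blocked by the corners and do not extend across, so Definition~\ref{defsta} fails. Indeed, bi-infinite scalloped chains are a standard feature of non-product pseudo-Anosov flows (Theorem~\ref{theb} produces them from non-separated leaves), so no amount of ``propagation'' of adjacency along $\mathcal{C}$ can force a suspension. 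The paper's route is entirely different: it shows directly, by analysing which quadrants at $\beta=g(\alpha)$ can contain lozenges of $g(\mathcal{C})$, that the two lozenges of $g(\mathcal{C})$ at $\beta$ must lie in the quadrants containing the corners of $C_k$; iterating this forces $g(\alpha_i)$ into the interior of $C_{k+i}$ for every $i$, and this simultaneously gives regularity of all corners and non-adjacency of consecutive lozenges.

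There are two further issues. First, your regularity argument assumes ``at most two $G$-orbits of corners'', but the translation length of a generator of $G/\langle g\rangle$ on $\mathcal{C}$ can be any positive integer, so $G$-equivariance alone need not reach every corner. Second, in the Seifert step you assert that the $\pi_1$-injective torus carrying $G$ can be homotoped into a Seifert piece $P$, but a ${\bf Z}^2$ can perfectly well sit on a JSJ torus between two atoroidal pieces. More seriously, even if $G\subset\pi_1(P)$, the element $g$ witnessing non-simplicity is not a priori in $\pi_1(P)$, so your contradiction (``images of corners lie on boundaries of Birkhoff annuli'') does not apply to $g$. The paper handles this by first using a compactness argument on a Birkhoff annulus to find a nontrivial $h_0\in G$ with $g^{-1}Gg\subset Z(h_0)$; the non-abelian centralizer then forces (via Jaco--Shalen) a Seifert piece $P$ containing $g^{-1}Gg$, and only then does the periodic-fiber hypothesis lead to $g^{-1}Gg$ preserving $\mathcal{C}$, contradicting that $g(\alpha)$ is not a corner.
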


\begin{proof}{}
The chain $\mathcal C$ is not simple. 
Therefore there is a corner orbit $\alpha$ of $\mathcal C$
and an element $g$ in $\pi_1(M)$ so that 
$g(\alpha)$ is in the interior
of a lozenge in $\mathcal C$. 
Once and for all in this proof the orbit $\alpha$ and 
the transformation $g$ are fixed. We stress that the element
$g$ is not used all the time in this proof, but whenever
it is, it refers to this fixed element.
Notice that $g$ is NOT in
$G$ as $G$ preserves $\mathcal C$ and its corners.

\vskip .1in
\noindent
{\bf {Proof that ${\mathcal C}$ is a string of lozenges}}.
We denote by $\{ \alpha_i, i \in {\bf Z} \}$ the corners 
of $\mathcal C$ and by $\{ C_i, i \in {\bf Z} \}$ the
lozenges of ${\mathcal C}$, so that $\alpha_i$, 
$\alpha_{i+1}$ are the corners of $C_i$ for each integer $i$.
Moreover, we assume wlog $\alpha=\alpha_0$. 
By assumption there is an integer $k$ so that 
$\beta = g(\alpha)$ belongs to $C_k$.
We will prove that both corners $\alpha_k$, $\alpha_{k+1}$ of $C_k$ are in 
the interior of lozenges in $g({\mathcal C})$.
Since the orbit $\beta$ is in the interior of a lozenge, then
$\beta$ is non singular and $\ws(\beta),\wu(\beta)$
define exactly 4 quadrants in $\mi$. Two of the quadrants 
contain the corners of $C_k$. Let $W$ be one of
the remaining quadrants. It contains a perfect
fit between two sides of the lozenge $C_k$. Wlog assume that
$S = \ws(\alpha_k)$ and $U = \wu(\alpha_{k+1})$.

We claim that $W$ does not
contain a lozenge 
with corner in
$\beta$. Suppose not and call this lozenge $D_1$.
Then $D_1$ has 2 sides in $\wu(\beta)$ and $\ws(\beta)$.
There is a unstable  side of $D_1$, call it $E$ which is 
contained in an unstable leaf
and makes a perfect fit with $\ws(\beta)$.
Since $\ws(\beta)$ intersects $U = \wu(\alpha_{k+1})$ transversely,
it follows that $S$ separates $U$ from the lozenge 
$C_k$. Therefore $E$ cannot intersect any leaf
which makes a perfect fit with $\wu(\beta)$.
This is a contradiction and proves the claim.

It follows that the 2 quadrants defined by $\beta$ 
which contain respectively $\alpha_k$ and $\alpha_{k+1}$, also contain
lozenges in $g({\mathcal C})$. 
Let $D_2, D_3$ be these
lozenges which are in $g({\mathcal C})$ and have a corner
in $\beta$ (where $g$ is the fixed element in this proof). 
Since $\widetilde W^s(\alpha_{k+1})$ intersects $\widetilde W^u(\beta)$ and
$\widetilde W^u(\alpha_{k+1})$ intersects $\widetilde W^s(\beta)$, the
definition of lozenges implies that $\alpha_{k+1}$ is in
the interior of one of these lozenges $-$ say $D_3$. 
As in the argument above
it now follows that the other corners of $D_2, D_3$
are in the interior of $C_{k-1}, C_{k+1}$ respectively.
This procedure can be iterated indefinitely and in both directions. It now follows
that {\underline {all}}
$g(\alpha_i)$ are in the interior
of lozenges in ${\mathcal C}$.
In particular this implies that 
each $g(\alpha_i)$ (and consequently
the same for the orbits ${\alpha_i}$) is non singular and 
$C_i$, $C_{i+1}$ are not adjacent. 
This shows that ${\mathcal C}$ is a string of lozenges.

\vskip .1in
In order to conclude, we have to show that up to conjugation $G$ is contained 
in the fundamental group of a free Seifert piece. Let $H$ be the stabilizer of
$\mathcal C$ in $\pi_1(M)$, and let $H_0$ be the
maximal abelian subgroup of $H$ (see lemma \ref{stachain} which shows that
$H_0$ has index $\leq 2$ in $H$).
Then $G \subset H_0$; hence we can assume $G=H_0$, ie. that $G$ has index at most two
in $H$. 

\vskip .1in
We stress the following very important fact:
the above arguments show that for any corner $\gamma$ of
${\mathcal C}$ there are exactly 2 lozenges which have 
corner $\gamma$.
The remaining quadrants of $\gamma$ do NOT have
lozenges with corner $\gamma$. As a corollary, we obtain
that the tree ${\mathcal G}(\alpha)$ coincides with $\mathcal C$. Similarly,
${\mathcal G}(\beta)=g({\mathcal C})$.
In particular $\mathcal C = {\mathcal G}(\alpha)$ is a simplicial linear tree.

\vskip .1in
\noindent
{\bf {Claim 1}} $-$ One can assume that the manifold $M$ is orientable.

Suppose that $M$ is not orientable and let $M_2$ be the orientation double
cover of $M$, with lifted flow $\Phi_2$.
Let $l^s$ be the set of stable leaves either
intersecting a lozenge in ${\mathcal C}$ or containing
a corner orbit in ${\mathcal C}$. This set is order
isomorphic to the reals $\rrrr$.
Similarly define $l^u$. One can use the
arguments above to show that
$l^s, l^u$ are invariant under $g$. 
This is because every
$g(\alpha_i)$ is in the interior of a lozenge in $\mathcal C$ $-$
so the arguments above show that if
$q$ is any corner of ${\mathcal C}$, then 
$g(q)$ is also in the
interior of a lozenge in ${\mathcal C}$. This implies the
$g$ invariance of $l^s, l^u$.
If $g$ preserves the order in $l^s$ then the arguments
above imply that $g$ also preserves the order in $l^u$:
this is because
one can order $l^s, l^u$ so that ``high elements" in 
$l^s$ intersect high elements in $l^u$. Since intersection
is preserved by the action of $g$ the statement follows.
This implies that $g$ preserves orientation in $\oo \cong
\rrrr^2$. If on the other hand $g$ reverses order in
$l^s$, the same argument shows that $g$ also reverses
order in $l^u$ and hence $g$ again preserves orientation
in $\oo$. Since clearly $g$ preserves the flow direction
it follows that in any case $g$ preserves orientation
in $M$. Therefore $g$ is an element of $\pi_1(M_2)$.

Similarly, one proves for every element $a$ of $G$ that 
if $a$ reverses the orientation of $l^s$, it also reverses the
orientation of $l^u$: $G$ is contained in $\pi_1(M_2)$.
Now if $P_2$ is a free Seifert piece whose fundamental group
contains $G$, then $P=p(P_2)$ is a free Seifert piece in $M$
whose fundamental group contains $G$. Hence we may assume that
$M = M_2$ in the statement of the proposition. Claim 1 is proved.

Notice that it is not true that any ${\bf Z}^2$ subgroup
of any $3$-manifold group consists entirely of orientation
preserving elements. For instance consider the twisted
$I$-bundle over the torus $T^2$. The one sided torus
in the middle has orientation reversing elements. Glue two
copies of this to produce examples in closed $3$-manifolds.

\vskip .1in
\noindent
{\bf {Assumption}} $-$ From now on we can assume
that $M$ is orientable.

Since $g$ preserves $l^s$,
there are two options:
Case I) $g$ preserves orientation in $l^s$. Then there is $k$ in ${\bf Z}$ so that
$g(\alpha_i)$ is always in the interior of $C_{k+i}$,  \ \ 
Case II) $g$ reverses  orientation in $l^s$. 
We will reindex the $\alpha_i$. This forces a reindexing of the $C_i$ as
$C_i$ is always the lozenge with corners in $\alpha_i$ and $\alpha_{i+1}$.
Here there are two possibilities: First if $k$ is even, then 
shift the $i's$ (by $i \rightarrow i - k/2$) so that
$g(\alpha_i)$ is in the interior
of $C_{-i}$ for all $i$.
If on the other hand $k$ is odd, then first shift the $i's$ by
$i \rightarrow i -  (\frac{k+1}{2})$ which results in 
$g(\alpha_i)$ is in $C_{-i-1}$, then do a reflection $i \rightarrow -i$,
which results in $g(\alpha_i)$ is in $C_{-i}$. So regardless of $k$
even or odd in case II), we can adjust the indices so that
$g(\alpha_i)$ is always in $C_{-i}$.

Then up to choosing a new
$\alpha_0$ and perhaps changing $i$ to $-i$,
it follows that $g(\alpha_i)$ is in the interior
of $C_{-i}$ for all $i$.

\vskip .1in
\noindent
{\bf {Claim 2}} $-$  There is an element $h_0$ of $G$ such that the
centralizer $Z(h_0)$\ (in $\pi_1(M)$) \ is not abelian.

Let $f$ denote a generator of the stabilizer in $G$ of every $\alpha_i$, and 
let $h$ be an element of $G$ acting freely on $\mathcal C$: there is an integer $p$ so that
$h(\alpha_i)=\alpha_{i-p}$, $h(C_i)=C_{i-p}$. 

Assume first that we are in Case I).
For every integer $i$,  $g(\alpha_{pi})$ is contained in $C_{k+pi}$,
hence all the $h^ig(\alpha_{pi})$ lie in $C_k$. On the other hand,
one can produce as in \cite{Ba2} a $f$-invariant proper embedding of 
$[0,1] \times \rrrr$ into $\mi$, so that 
$\{ 0, 1 \} \times \rrrr$ maps into the corner
orbits of $C_k$, $(0,1) \times \rrrr$ maps
into the interior of the lozenge and transversely
to $\wwp$.
The image of this embedding projects 
to an embedded annulus $\hat{A}$ in $\mi/\langle f \rangle$, which 
itself projects to an {\em immersed}
annulus $A$ in $M$, transverse in its interior to the 
flow $\Phi$. The key point is that $A$ is compact, hence
the periodic orbit $\pi(\beta)$ intersects $A$ only a finite number of times.
It follows that 
$\pi(\beta) = \pi(\alpha_0) = \pi(\alpha)$ 
admits only finitely many lifts in $\mi/\langle f \rangle$
intersecting $\hat{A}$. In other words, there must be distinct positive integers $i, j$ and an
integer $q$ such
that:

$$h^i g (\alpha_{pi}) \ = \ f^q (h^j g (\alpha_{pj}))$$


\noindent
Let

$$\alpha' = \alpha_{pj} = h^{-j}(\alpha) 
\ \ \ \ \ {\rm so} \ \ \ \ \ \alpha_{pi} = h^{-i}(\alpha) = h^{j-i}(\alpha')$$

\noindent
Hence:
$$h^igh^{j-i}(\alpha') \ = \ 
f^q h^j g(\alpha')$$

\noindent
So there is $n$ for which $h^i g h^{j-i} = f^q h^j  g s^n$,
where $s$ is the stabilizer 
{\em in $\pi_1(M)$} of $\alpha_{pj} = \alpha'$. 
Let $m = i-j$. Since $f$ and $h$ are both in $G$ they commute, so the 
last equation
implies 

$$g^{-1} f^{-q} h^m g \ = \ s^n h^m$$

\noindent
Notice that $s$ preserves ${\mathcal G}(\alpha) = \mathcal C$.
This is because $\mathcal C$ is a string of lozenges and also the very important fact
mentioned above.
Hence $s$ belongs to $H$. 
Let $h_0 = (s^n h^m)^2$ and $v = (f^{-q} h^m)^2$.
Also since $m$ is not zero then $h_0$ is not the identity.
The equation above implies that 
$g^{-1} v g  = h_0$.
Since $H_0$ has index $\leq 2$ in $H$ then $h_0$ is in $H_0$.
 
%

We conclude that
$h_0$ is a non trivial element 
of $G$ whose centralizer $Z(h_0)$  contains $G$, but also $g^{-1}Gg$.

\vskip .1in
Now suppose we are in Case II) and we want to achieve the same conclusion.
This is similar to Case I) and some details are left to the reader.
Here $g(\alpha_{pi})$ is in $C_{-pi}$ and 
$h^{-i}(C_{-pi}) = C_0$.
As in case I) there are $i, j$ positive and distinct and $q$ integer
to that

$$h^{-i} g(\alpha_{pi}) \ = \ f^q  h^{-j} g(\alpha_{pj}),$$

\noindent
So if $\alpha' = \alpha_{pj}$ then $h^{-i} g h^{j-i} = f^q h^{-j} g s^n$,
with $s$ as above, leading finally to 

$$g^{-1} ( f^{-q} h^{-m}) g \ = \ s^n h^m, \ \ \ \ {\rm where} \ \ \ \ m = i - j \not = 0$$

\noindent
Here take $h_0 = (s^n h^m)^2$ non trivial in $H_0$ and let $v = (f^{-q} h^{-m})^2$.
So as before $g^{-1} v g = h_0$, so again $h_0$ is a non trivial element of $H_0$
whose centralizer contains $G$ and also $g^{-1} G g$.

\vskip .1in
Now assume by way of contradiction that 
$Z(h_0)$ is abelian.
According to lemma \ref{le:uniqueC}, since the chain $\mathcal C$ is not scalloped,
it is the unique $G$-invariant chain of lozenges. Since $g^{-1}Gg$ is a subgroup
of $Z(h_0)$, it commutes with $G$ as $Z(h_0)$ is abelian. It follows that $\mathcal C$ is $g^{-1}Gg$-invariant.

But a similar argument shows that $g^{-1}(\mathcal C)$ is the unique $g^{-1}Gg$-invariant
chain of lozenges. Hence $g^{-1}(\mathcal C)=\mathcal C$. This is a contradiction since
$\beta = g(\alpha)$ is not a corner of $\mathcal C$. 
This finishes the proof of claim 2.

\vskip .1in
Since $Z(h_0)$ is not abelian,
lemma VI.1.5 of \cite{Ja-Sh} shows that there is a 
Seifert fibered piece $P$ of the torus decomposition
of $M$ \cite{Ja-Sh,Jo,Ja} so that $Z(h_0) \subset \pi_1(P)$.
The hypothesis of lemma VI.1.5 of \cite{Ja-Sh} require
i) $M$ is irreducible, ii) $M$ is orientable, iii) $M$ has
an incompressible surface. Condition i) holds because
$M$ has a pseudo-Anosov flow \cite{Fe-Mo}. 
Condition ii) holds because of Claim 1. As for condition iii) we know
that $\pi_1(M)$ has a ${\bf Z}^2$ subgroup. 
Work of Gabai \cite{Ga} or Casson and Jungreis \cite{Ca-Ju}
implies that either $M$ has an embedded incompressible
torus or $M$ is a small Seifert fibered space. But it $M$ is Seifert
fibered, then theorem \ref{Seifert} shows that the fiber in $M$
acts freely on $\oo$ and we are done. 
So we can assume that
condition iii) also holds.
An example of a non simple chain of lozenges 
in Seifert fibered spaces is the following:
let $\Phi$ be a geodesic flow,  $\gamma$ a non simple geodesic
and $T$ the torus associated to $\gamma$ with corresponding chain ${\mathcal C}$.
Then $\mathcal C$ is not simple.

In order to conclude, we just have to show that $P$ is a free piece. Assume this is not the case:
let $t$ be the fiber of a Seifert fibration in $P$ admitting fixed points in $\oo$.

\vskip .1in
\noindent
{\bf {Claim 3}} $-$ For any $\nu$ in $\pi_1(P)$,
$\nu({\mathcal C}) = {\mathcal C}$.

Since $G \subset \pi_1(P)$, for every $a$ in $G$ we have $ata^{-1}=t^{\pm 1}$. Let
$G'$ be the subgroup of $G$ made of elements $a^2$ where $a$ is an
arbitrary element of $G$.
Then $G'$ 
is isomorphic to ${\bf Z}^2$ (it has index $4$ in $G$) and $G'$ is
contained in the centralizer $Z(t)$. The chain $\mathcal C$
is the unique $G'$-invariant chain of lozenges  (lemma \ref{le:uniqueC}). But since
$G' \subset Z(t)$, the chain $t(\mathcal C)$
is $G'$-invariant, hence equal to $\mathcal C$.
Then $t$ has a fixed point which is a corner of $\mathcal C$
and so ${\mathcal G}(t) \subset {\mathcal G}(\alpha)$.

Consider now the action of $G'$ on the tree ${\mathcal G}(t)$. 
Since ${\mathcal G}(t)$ is contained in a linear tree and $G'$ is
isomorphic to ${\bf Z}^2$, 
there is an element $b$ of $G'$ acting freely on ${\mathcal G}(t)$. 
Since ${\mathcal G}(t) \subset {\mathcal G}(\alpha) = \mathcal C$ and the last one is a simplicial
linear tree, it now follows that ${\mathcal G}(t) = \mathcal C$.
Claim 3 follows since ${\mathcal G}(t)$
is obviously $\pi_1(P)$-invariant.

\vskip .1in
The fundamental group $\pi_1(P)$ contains $Z(h_0)$ which itself contains
$g^{-1}Gg$: it follows that $g^{-1}Gg$ preserves $\mathcal C$. We have already observed,
while proving that $Z(h_0)$ is not abelian (claim 2), that this is impossible. This contradiction proves
that $t$ acts freely on $\oo$.
This finishes the proof of proposition \ref{toriem}.
\end{proof}

\noindent
{\bf {Remark}} - The same arguments as in the section ``Proof that
${\mathcal C}$ is a string of lozenges" of the above
proposition prove the following:
suppose that ${\mathcal C}$ is a connected infinite collection
of lozenges and their corners so that no two lozenges
intersect (that is, their interiors are disjoint). 
Suppose that there is a corner $p$ in ${\mathcal C}$
and an element $g$ of $\pi_1(M)$ so that
$g(p)$ is in (the interior of) a lozenge in ${\mathcal C}$. Then
$C$ is a string of lozenges with their corners.


\begin{define}\label{def:birk}
A {\em Birkhoff annulus} is an immersed annulus in $M$ so that 
each boundary component is a periodic orbit of the flow, and such that
the interior of the annulus is transverse to the flow.
If the interior is embedded, then the
annulus is called {\em weakly embedded.} 
\end{define}

The interior of a Birkhoff annulus is
transverse to the flow, and hence is also transverse to
the weak foliations $\Lambda^s$, $\Lambda^u$. They therefore induce
foliations on the annulus denoted by  $l^s$, $l^u$. These foliations can both be extended 
to the boundary of the annulus as foliations tangent to the boundary.
A singular orbit with $p$ prongs (here again we use that for
pseudo-Anosov flows $p \geq 3$) induces a singularity of 
$l^s$ (or $l^u$) in the interior of the annulus having 
negative index $1-p/2$. Since the Euler characteristic of
the annulus is zero, 
Poincar\'e-Hopf index formula implies that the interior
of the annulus intersects no singular orbits. 

\begin{define}\label{def:elementary}
A Birkhoff annulus is {\em elementary} if $l^s,$ $l^u,$ do not have closed leaves
in the interior.
\end{define}

Observe that in the definition of weakly embedded Birkhoff annuli,
we did not require the whole annulus to be embedded: it may wrap
around each periodic orbit in its boundary, an arbitrary (finite) number of times.
Notice however that the boundary cannot intersect the interior, as otherwise
points near the boundary would produce self intersections in the interior.

Let $\Upsilon: A \hookrightarrow M$ be a Birkhoff annulus (embedded or not). It lifts as an 
immersion $\tilde{\Upsilon}: \tilde{A} \sim {\bf R} \times [0, 1] \hookrightarrow \mi$ such
that ${\bf R} \times \{ 0 \}$, and ${\bf R} \times \{ 1 \}$ are orbits of $\wwp$, and
such that the image by $\tilde{\Upsilon}$ of $ {\bf R} \times (0, 1)$ is transverse to $\wwp$:
we call 
$\widetilde \Upsilon: \widetilde A \hookrightarrow \mi$ a {\em Birkhoff band.}
Moreover, this image is invariant under the action of 
the cyclic subgroup ${\Upsilon}_\ast(\pi_1(A)) \sim {\bf Z}$. 
Finally, if $\Upsilon: A \hookrightarrow M$ is elementary, 
every orbit of $\mi$ intersects the image of the interior in at most one point, and
the projection in $\oo$ is a ${\Upsilon}_\ast(\pi_1(A))$-invariant 
lozenge union its two corners but without the sides (\cite[Proposition 5.1]{Ba2}).
This set is neither closed nor open in $\oo$.

Conversely, and as we already mentioned in the proof of Proposition \ref{toriem}, Claim 2, 
every lozenge in $\oo$ invariant by a cyclic subgroup of $\pi_1(M)$ is the projection
in $\oo$ of an embedded Birkhoff band in $\mi,$ that 
projects in $M$ to an elementary Birkhoff annulus. Moreover, if the lozenge
is {\em simple}, ie. if its interior contains no iterate of its corner, then the
Birkhoff annulus can be selected weakly embedded (\cite[Theorem D]{Ba2}).


\vskip .1in

More generally, let $\mathcal C$ be a string of lozenges invariant under a subgroup $G$
of $\pi_1(M)$ isomorphic to ${\bf Z}^2$. Then, there is a cyclic subgroup $H$ of $G$
fixing every lozenge in $\mathcal C$. We lift all the lozenges to $\mi$, so that the lift of every two
successive lozenges share a common $H$-invariant orbit. This can be done in a $G$-equivariant way.
We also lift the entire corner orbits of the lozenges. The union is a set which 
is $G$ invariant in $\mi$ and 
projects in the quotient of 
$\mi$ by $G$ to an embedded torus. This this torus projects 
to an immersed torus in $M$ which is a union of elementary Birkhoff annuli.

\begin{define}
A {\em Birkhoff torus} is an immersion $\Upsilon: T \to M$ of a torus $T$, such that $T$ is an union
of distinct annuli $A_i$ for 
which every restriction $\Upsilon: A_i \to M$ is an elementary Birkhoff annulus.
In addition we require the following: if $A_i$ and $A_{i+1}$ are two consecutive
annuli abutting the common closed orbit $\gamma$, then locally near
$\gamma$ the annuli  $A_i$ and $A_{i+1}$ are
in distinct quadrants defined by $\gamma$.

Similarly, a {\em Birkhoff-Klein bottle} is an immersion of the Klein bottle whose
image is an union of elementary Birkhoff annuli. We have the same restriction
on abutting annuli as in the tori case.
\end{define}

Notice the restriction to elementary Birkhoff annuli.

In the sequel, a {\em closed Birkhoff surface} means 
a Birkhoff torus or a Birkhoff-Klein bottle.
A Birkhoff surface is an union of Birkhoff annuli. 
It contains a finite number of periodic orbits of $\Phi$,
called the {\em tangent orbits,} 
and is transverse to $\Phi$ outside these periodic orbits.

The reason for the added condition about quadrants is
the following: 
Without it we could have started with 
say an embedded Birkhoff annulus $A$ and flow $A$ forward slightly
to an annulus $A'$ which is disjoint from $A$ in the interior.
The union $T = A \cup A'$ is a torus which is NOT incompressible
as it bounds an obvious solid torus. 
We now explain the added condition on quadrants. Let $\gamma$ be
a closed orbit in the boundary of an elementary Birkhoff annulus
$A$. 
Denote this boundary component of $A$ by $\partial _1 A$.
The stable and unstable leaves of $\gamma$ define quadrants:
in a neighborhood of $\gamma$ they are 
the components of the complement of the union of the local sheets
of $W^s(\gamma)$ and
$W^u(\gamma)$ near $\gamma$.
The interior of the annulus $A$ cannot intersect these local sheets of 
$W^s(\gamma)$ or $W^u(\gamma)$. This is because the interior is
transverse to $\Phi$ and $A$ is compact $-$ an intersection
would create a closed curve intersection. This is disallowed
because all Birkhoff annuli are elementary.
Therefore near $\partial _1 A$, the annulus 
$A$ enters a unique quadrant defined by $\gamma$.
In the universal cover this means that the lift
$\widetilde A$ enters a well defined, unique  lozenge with corner
$\widetilde \gamma$.

\begin{define}
A closed Birkhoff surface $\Upsilon: S \to M$ is called {\em weakly embedded} if the Birkhoff annuli $\Upsilon: A_i \to M$ are all weakly embedded, 
with interiors two-by-two disjoint.

If moreover $\Upsilon: S \to M$ is an embedding, then the closed Birkhoff surface is {\em embedded.}
\end{define}

As explained above, the condition that interiors are embedded and two by two disjoint implies that none 
of the tangent periodic orbits of $\Upsilon(S)$
intersects any interior of the annuli.

\begin{proposition}\label{pro:Csimple}
Let $\mathcal C$ be a string of lozenges in $\oo$ 
invariant under a subgroup $G$
of $\pi_1(M)$ isomorphic to ${\bf Z}^2$ or $\pi_1(K)$. Then $\mathcal C$ is the projection in $\oo$
of the lift to $\mi$ of a closed Birkhoff surface $\Upsilon: S \to M$. More precisely,
$\Upsilon: S \to M$ is the composition $\hat{p} \circ \hat{\Upsilon}$ of an embbeding $\hat{\Upsilon}: S \to \widehat{M}$ 
and the covering map $\hat{p}: \widehat{M} \to M$, where
$\widehat{M}$ is the quotient of $\mi$ by $G$.

Moreover, if $\mathcal C$ is simple,
ie. if no element of $\pi_1(M)$ maps a corner of $\mathcal C$ in the interior of
a lozenge of $\mathcal C$, then the closed  Birkhoff surface can be selected weakly embedded.
\label{simple}
\end{proposition}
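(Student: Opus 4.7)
The plan is to realize the Birkhoff surface explicitly by gluing the individual Birkhoff annuli coming from the lozenges of $\mathcal C$, working first in the cover $\widehat M=\mi/G$ where the construction is embedded, and then pushing forward by $\hat p$. First, let $H\subset G$ be the (unique, cyclic) subgroup fixing every corner of $\mathcal C$; such an $H$ exists because the corners of $\mathcal C$ form a linear set on which $G\cong{\bf Z}^2$ or $\pi_1(K)$ acts by translations/reflections with nontrivial pointwise stabilizer (as in the proof of Lemma~\ref{stachain}). Each lozenge $C_i\in\mathcal C$ is then $H$-invariant, and by the preceding remarks (Theorem D of \cite{Ba2}) it is the image in $\oo$ of an embedded, $H$-invariant Birkhoff band $\tilde A_i\subset\mi$ whose two boundary orbits are precisely the lifts of the corners of $C_i$.

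Next, I arrange the family $\{\tilde A_i\}$ $G$-equivariantly and coherently: pick a single $\tilde A_0$, set $\tilde A_{g\cdot 0}=g(\tilde A_0)$ for every $g\in G$, and for the intermediate indices adjust each $\tilde A_i$ so that consecutive bands $\tilde A_i,\tilde A_{i+1}$ share the common corner orbit of $C_i,C_{i+1}$. Let $\widetilde S:=\bigcup_i\tilde A_i$; by construction $\widetilde S$ is a connected $G$-invariant subset of $\mi$ whose projection to $\oo$ is exactly $\mathcal C$ together with its corners. Because $G\subset\pi_1(M)$ acts freely and properly discontinuously on $\mi$, the quotient $\widehat M$ is a manifold and $\widetilde S$ descends to an embedded closed surface $\hat\Upsilon:S\hookrightarrow\widehat M$: the interiors of distinct $\tilde A_i$ project to disjoint interiors of lozenges in $\oo$, the corner orbits are pairwise disjoint periodic orbits of $\wwp$, and adjacent $\tilde A_i,\tilde A_{i+1}$ meet only at their shared corner orbit. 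The surface $S$ is a torus or a Klein bottle according as $G\cong{\bf Z}^2$ or $G\cong\pi_1(K)$. The quadrant condition in the definition of Birkhoff surface holds because, by definition of a chain of lozenges, two successive lozenges lie in distinct quadrants at their shared corner. The composition $\Upsilon:=\hat p\circ\hat\Upsilon$ is then the desired closed Birkhoff surface.

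For the last statement, assume $\mathcal C$ is simple. Then each individual lozenge $C_i$ is in particular simple in the sense of \cite{Ba2} (if an iterate of its corner lay in the interior of $C_i$, it would lie in the interior of a lozenge of $\mathcal C$), so by Theorem D of \cite{Ba2} the Birkhoff annulus associated to $\tilde A_i$ in $M$ may be chosen weakly embedded. The interiors of distinct annuli project to disjoint lozenge interiors in $\oo$ and hence remain pairwise disjoint in $M$. The only point left to check is that no tangent periodic orbit of $\Upsilon(S)$ meets the interior of any Birkhoff annulus of $S$; but a tangent orbit is the image of a corner of $\mathcal C$ and an interior point of an annulus is the image of an interior point of some lozenge of $\mathcal C$, so this is precisely the content of the simplicity of $\mathcal C$.

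The principal obstacle is the $G$-equivariant choice of the Birkhoff bands when $G\cong\pi_1(K)$, where some elements reverse the orientation along the chain and may swap the two boundary orbits of a single band: one must verify that this choice can be made consistently and is compatible with the gluing at shared corner orbits, and that the resulting quadrant condition holds globally around each tangent orbit in $\widehat M$ (not just along the linear chain structure). A secondary subtlety, in the weakly embedded part, is that individually weakly embedded annuli might in principle overlap along boundary orbits lying in the interior of some other annulus after projection by $\hat p$; this is ruled out precisely by the global simplicity hypothesis on $\mathcal C$, not merely by the simplicity of each individual lozenge.
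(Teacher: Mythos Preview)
Your construction for the first part is essentially the paper's: choose Birkhoff bands over a fundamental domain for the $G$-action on the lozenges of $\mathcal C$, extend $G$-equivariantly, and project through $\widehat M=\mi/G$. That is fine.

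The gap is in the ``weakly embedded'' part, specifically the sentence: \emph{``The interiors of distinct annuli project to disjoint lozenge interiors in $\oo$ and hence remain pairwise disjoint in $M$.''} What is true is that the chosen bands $\tilde A_i$ and $\tilde A_j$ are disjoint in $\mi$, since they project to disjoint lozenges $C_i,C_j$ in $\oo$. But disjointness in $M$ requires that $\tilde A_i$ be disjoint from \emph{every} $\pi_1(M)$-translate $g(\tilde A_j)$, not only from $\tilde A_j$ itself. For $g\notin G$ the lozenge $g(C_j)$ need not belong to $\mathcal C$, and nothing in the simplicity hypothesis prevents $\operatorname{int}(g(C_j))\cap\operatorname{int}(C_i)\neq\emptyset$ in $\oo$; in that case the two transverse surfaces $g(\tilde A_j)$ and $\tilde A_i$ can perfectly well cross in $\mi$. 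Simplicity only controls where $\pi_1(M)$-translates of \emph{corners} land, not where translates of lozenge interiors land.

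What the paper does instead is invoke the cut-and-paste / isotopy-along-the-flow techniques of \cite[\S 7]{Ba2}: one slides the annuli along orbits of $\Phi$ to remove the interior intersections. The point of the simplicity hypothesis is precisely to guarantee that the tangent periodic orbits never obstruct this isotopy (they never sit in the interior of another annulus), so the process terminates. Your last paragraph correctly identifies the tangent-orbit obstruction as the one ruled out by global simplicity, but this is only half the story: you still need the flow isotopy to clear the interior--interior crossings, and that step is missing from your argument.
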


\begin{proof}
The first part has been explained before in the case where $G$ is abelian, and
is easily generalized to the case $G\sim\pi_1(K)$: the matter is to find a fundamental domain
of the action of $G$ on the set of lozenges in $\mathcal C$, to lift each lozenge in this fundamental
domain to a Birkhoff band, and then to lift all other lozenges in $\mathcal C$ as Birkhoff bands in
a $G$-equivariant way.

Assume now that the chain is simple. Every lozenge in it is simple.
Then the closed Birkhoff surface is an union of weakly embedded Birkhoff annuli, whose
interiors are all disjoint from the tangent periodic orbits. Since the chain
is simple, we can prove, using the technics in 
\cite[\S~7]{Ba2} that through some isotopy along the flow, the interiors of the elementary annuli can be made
disjoint from each other, that is, the Birkhoff surface is weakly embedded.
\end{proof}

All of these results in \cite{Ba2} were stated and proved for smooth Anosov flows.
However, exactly the same techniques work for general pseudo-Anosov flows.

More generally, using the results above, then 
according to lemma~\ref{le:Z2}:

\begin{lemma}{}{}\label{le:kleinbirkhoff}
 Let $G$ be a subgroup of $\pi_1(M)$ isomorphic
to ${\bf Z}^2$.
Suppose that the pseudo-Anosov flow $\Phi$ is not product. Then $G$ is the image
$\Upsilon_\ast(\pi_1(T))$ of the fundamental group of a
Birkhoff torus $\Upsilon: T \to M$.
%
\label{immers}
\end{lemma}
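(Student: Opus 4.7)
The plan is to apply Lemma \ref{le:Z2} to extract a $G$-invariant chain of lozenges and then realize this chain as the lift of a Birkhoff torus, mirroring the construction outlined in Proposition \ref{pro:Csimple}. The only new issue is that our chain need not be a \emph{string} of lozenges (it may contain adjacent consecutive lozenges or singular corners), so the conclusion of Proposition \ref{pro:Csimple} cannot be invoked directly; I will need a mild extension of its construction.

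First, since $\Phi$ is not product and $G \cong {\bf Z}^2 \subset \pi_1(M)$, Lemma \ref{le:Z2} produces a $G$-invariant bi-infinite chain of lozenges ${\mathcal C}$. Moreover, the proof of that lemma shows that there is a non-trivial cyclic subgroup $H$ of $G$ fixing every corner of ${\mathcal C}$, while the quotient $G/H \cong {\bf Z}$ acts freely by translations on the linearly ordered collection of corners.

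Next I would lift each lozenge $C$ of ${\mathcal C}$ to an embedded Birkhoff band $\widetilde{\Upsilon}_C: {\bf R} \times [0,1] \hookrightarrow \mi$ whose two boundary flow lines are the corner orbits of $C$; such a lift exists by \cite[Proposition~5.1]{Ba2}. Using a fundamental domain for the $G$-action on the set of lozenges, these lifts can be chosen $G$-equivariantly, so that the union $\widetilde{T} \subset \mi$ of all the bands (including the corner orbits) is a $G$-invariant set. Near each corner orbit $\widetilde{\gamma}$, two consecutive bands meet: by the very definition of a lozenge, these two bands lie in distinct quadrants of the complement of $\ws(\widetilde{\gamma}) \cup \wu(\widetilde{\gamma})$ (they lie in opposite quadrants when consecutive lozenges merely share the corner, and in adjacent quadrants when consecutive lozenges are adjacent). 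Hence $\widetilde{T}$ is a topological $2$-manifold embedded in $\mi$, homeomorphic to ${\bf R}^2$ since it is an infinite strip of rectangles glued along boundary edges; in particular it is simply connected.

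Finally, the action of $G$ on $\widetilde{T}$ is free: $H$ acts by non-trivial translations along the corner orbits, and $G/H$ permutes the corner orbits freely by construction. Therefore $T := \widetilde{T}/G$ is a closed surface with $\pi_1(T) = G \cong {\bf Z}^2$, which forces $T$ to be a torus (and not a Klein bottle). The embedding $\widetilde{T} \hookrightarrow \mi$ descends to an embedding $\widehat{\Upsilon}: T \hookrightarrow \widehat{M} = \mi/G$, and post-composing with the covering projection $\widehat{p}: \widehat{M} \to M$ yields the desired immersion $\Upsilon := \widehat{p} \circ \widehat{\Upsilon} : T \to M$, which is a Birkhoff torus (the quadrant condition in the definition is built in) with $\Upsilon_\ast(\pi_1(T)) = G$.

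The principal technical obstacle is verifying that $\widetilde{T}$ is genuinely a $2$-manifold at corners where the chain is scalloped (adjacent consecutive lozenges sharing a full side) or where corners are singular $p$-prong orbits; in both cases one must check that the two consecutive bands approach the common corner orbit through distinct quadrants and glue cleanly along that orbit without creating additional self-intersections. This is exactly the quadrant requirement in the definition of Birkhoff torus, and it is automatic from the combinatorics of chains of lozenges, but it is the only place where the argument differs from the string case treated in Proposition \ref{pro:Csimple}.
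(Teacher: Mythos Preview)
Your proof is correct and follows exactly the approach the paper intends: apply Lemma~\ref{le:Z2} to obtain a $G$-invariant bi-infinite chain of lozenges, then lift the lozenges to Birkhoff bands in a $G$-equivariant way and quotient by $G$ to obtain the Birkhoff torus. The paper's own argument is just the one-line remark ``using the results above, then according to lemma~\ref{le:Z2}''; you have actually made explicit (and correctly handled) the point that the chain produced by Lemma~\ref{le:Z2} need not be a \emph{string} of lozenges, so that Proposition~\ref{pro:Csimple} does not literally apply and one must observe that the quadrant condition still holds for adjacent lozenges and singular corners.
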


From the conclusion of theorem F it is easy to construct many
weakly embedded Birkhoff surfaces that are not homotopic
to an embedded Birkhoff surface.
Observe that weakly embedded closed Birkhoff surfaces may 
fail to be embedded for various reasons:

I)  every Birkhoff subannulus may be non-embedded, wrapping around one or both of the
tangent periodic orbit in its boundary. It means that some element $g$ of $\pi_1(M)$ (corresponding
to the periodic orbit)  is not
in $G$, but $g$  preserves a corner in $\mathcal C$ (where $\mathcal C$ is 
the $G \cong {\bf Z}^2$ invariant chain of lozenges).

II) an element of $\pi_1(M)$ may map a corner $\alpha$ of $\mathcal C$ to another corner $\beta$ of $\mathcal C$
which is not in the $G$-orbit of $\alpha$, ie. a tangent periodic orbit can be the boundary of more than
two Birkhoff subannuli.
This is the case in the Bonatti-Langevin example (\cite{Bo-La}).

III) even an element $g$ of $\pi_1(M)$ not in $G$ 
could map a lozenge in $\mathcal C$ to another lozenge in ${\mathcal C}$. 
At the Birkhoff surface level this implies the existence of 
two different elementary Birkhoff annuli (in the torus) sharing the same boundary components
and homotopic one to the other along the orbits of $\Phi$. 
This situation typically arises in Proposition~\ref{simple} if 
 $G$ is a finite
index subgroup of a bigger group preserving the chain $\mathcal C$. 

\vskip .1in
\noindent
{\bf {Remark:}} Let us first stress out that possibility
I) can certainly happen.
For example let $\Phi$ be
the geodesic flow in the unit tangent bundle of an 
{\underline {orientable}}
hyperbolic surface and let $T$ be the set of unit vectors
along a simple closed geodesic.
Let $\gamma$ be one closed orbit in $T$. Put coordinates
in the torus $\partial N(\gamma)$ 
so that $(0,1)$ is the meridian and $(1,0)$ is the trace
of say the stable foliation. The construction here is more general, the key
fact used is that the trace of the stable foliation
intersects the meridian once.
Do Dehn surgery on $\gamma$ so that the new meridian
is $(1,n)$ where $n$ is an integer $> 1$.
Isotoping the old torus slightly to 
a torus $T'$ avoiding $\gamma$ we see that it
survives the Dehn surgery. After Dehn surgery $T'$
is homotopic to a 
Birkhoff torus, with Birkhoff annuli  which wrap
$n$ times around the orbit $\gamma$.
Since it is a Birkhoff torus, it is $\pi_1$-injective
and so is $T'$. This gives the desired examples.
In fact the surgery procedure can be done by blowing up
the orbit $\gamma$ into a boundary torus and then
blowing back using the new meridian information \cite{Fr}.
Therefore the new Birkhoff torus can be taken as the 
result of the original Birkhoff torus under this procedure.

A Birkhoff torus is $\pi_1$-injective because of the following:
a closed curve is homotopic to either a closed orbit in
the Birkhoff torus or to a curve transverse to say the
stable foliation in the torus. In the first case the curve
represents a power of a closed orbit, which is not
null homotopic \cite{Fe-Mo}. In the second case, the curve 
is transverse to the stable foliation 
{\underline {in the torus}}. 
The condition that consecutive annuli abutt the closed
orbit from distinct
quadrants of that closed orbit implies that 
this curve is also transverse to the stable foliation
{\underline {in the manifold}}.
It now follows from the theory of essential laminations
that this curve 
is not null homotopic in the manifold \cite{Fe3,Ga-Oe}.

\vskip .1in
The notion of weakly embedded tori is sufficient to 
analyse the relationship between (possible) singular
orbits of the flow and the torus decomposition of $M$.

\begin{proposition}{}{}
Let $\alpha$ be a singular orbit of a pseudo-Anosov flow $\Phi$ in $M$.
Then $\alpha$ is homotopic into a piece of the torus decomposition of
$M$.
\label{singint}
\end{proposition}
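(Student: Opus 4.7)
The claim is trivial if $M$ is atoroidal or Seifert-fibered (the decomposition has a single piece), so assume the JSJ decomposition is non-trivial and let $T_1,\dots,T_n$ be its tori. The strategy is to show, for each $T_i$ individually, that the singular orbit $\alpha$ can be placed in the closure of a single component of $M \setminus T_i$; since the $T_i$ are pairwise disjoint, this confines $\alpha$ to one piece.

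Fix $i$ and apply Theorem~\ref{embedd} to $T_i$: up to isotopy (or homotopy in case (2)) one replaces $T_i$ by either (1) an embedded Birkhoff torus $T_i'$, (2) a weakly embedded Birkhoff torus inside a periodic Seifert piece, or (3) the boundary of a tubular neighborhood of an embedded Birkhoff-Klein bottle $K_i$ in a free Seifert piece. In all three cases, the relevant object (either $T_i'$, or the Birkhoff-Klein bottle $K_i$) is a union of Birkhoff annuli whose interiors are transverse to the flow. The key input, recorded already in Section~\ref{sec:birk}, is the Poincar\'e--Hopf computation on each Birkhoff annulus: the annulus has Euler characteristic zero while a $p$-prong singular orbit with $p \geq 3$ would contribute a negative index $1-p/2$ to the foliations $l^s, l^u$, so the interior of the annulus contains no singular orbit of $\Phi$. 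Equivalently, the interior of every lozenge in $\oo$ is free of singular points of $\oos, \oou$.

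Writing $\widetilde\alpha \in \oo$ for the projection of a lift of $\alpha$, it follows that $\widetilde\alpha$ cannot lie in the interior of any lozenge of the chain $\mathcal{C}_i \subset \oo$ associated to the lift of the Birkhoff surface: it must be a corner of $\mathcal{C}_i$ (so $\alpha$ coincides with a tangent periodic orbit of the Birkhoff surface) or lie outside $\mathcal{C}_i$ altogether (so $\alpha$ is disjoint from the Birkhoff surface). In cases (1) and (2) this directly places $\alpha$ on $T_i$, or in the closure of one side of $T_i$; in case (3), $\alpha$ is either a tangent orbit of $K_i$, so $\alpha \subset K_i$ already lies in the free Seifert piece containing $K_i$, or else $\alpha$ is disjoint from $K_i$, and a short additional argument passing to the orientation double cover (in which $K_i$ becomes an embedded Birkhoff torus whose neighborhood boundary unwraps the two sides of $T_i$) reduces the sub-case back to case (1). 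Assembling these single-torus conclusions across the pairwise disjoint $T_i$ places $\alpha$ in one JSJ piece. The main obstacle is this uniform treatment of case (3), where the JSJ torus itself is not literally a Birkhoff surface and the Poincar\'e--Hopf obstruction must be transferred from $K_i$ to $T_i$.
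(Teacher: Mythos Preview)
Your key observation --- that a singular orbit cannot meet the interior of any Birkhoff annulus --- is exactly the geometric fact the paper exploits. The problem lies in the assembling step.

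You assert that once $\alpha$ can be placed in the closure of one component of $M\setminus T_i$ for each $i$, pairwise disjointness of the $T_i$ confines $\alpha$ to a single piece. But the Birkhoff replacements $T_i'$ coming from Theorem~\ref{embedd} are produced one torus at a time and there is no reason for them to remain pairwise disjoint; worse, in case~(2) the replacement $T_i'$ is only \emph{homotopic} to $T_i$ and only \emph{weakly} embedded, so it does not separate $M$ in the same way $T_i$ does, and ``$\alpha$ is disjoint from $T_i'$'' does not directly yield ``$\alpha$ is on one side of $T_i$''. The homotopies placing $\alpha$ relative to each $T_i$ are different for different $i$, and you give no mechanism to combine them into a single homotopy carrying $\alpha$ into one JSJ piece.

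The paper sidesteps this entirely by working with the Bass--Serre tree $V$ of the JSJ decomposition in $\widetilde M$ and analyzing the action on $V$ of the deck transformation $g$ representing $\alpha$. If $g$ fixes a vertex or edge of $V$ we are done; otherwise $g$ translates along an axis, and a distance argument in $\widetilde M$ forces $\widetilde\alpha$ to eventually cross the interior of some lifted Birkhoff band, contradicting the Poincar\'e--Hopf obstruction. Only the \emph{immersed} Birkhoff tori of Lemma~\ref{immers} are needed here, not Theorem~\ref{embedd}: in $\widetilde M$ the immersed torus lifts to an embedded separating plane, which is all one needs. Because this argument singles out one edge on the axis, there is no assembling over the $T_i$ to perform. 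Your route can in fact be repaired by inserting exactly this tree argument, but then it becomes the paper's proof; as written, the passage from ``one side of each $T_i$'' to ``one piece'' is unjustified.

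Two smaller points: Theorem~\ref{embedd} carries the hypothesis that $M$ is orientable, which Proposition~\ref{singint} does not assume; and in the paper Theorem~\ref{embedd} is established \emph{after} Proposition~\ref{singint}, so invoking it here would require reordering the logic (there is no circularity, but it is worth noting).
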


\noindent
{\bf {Remarks}} $-$ 1)  Clearly this is not true for regular periodic orbits:
for example there are (non Seifert) graph manifolds with Anosov flows
which are transitive $-$ for example the flows constructed
by Handel and Thurston \cite{Ha-Th}, which are actually
volume preserving. 
Then there are dense orbits and hence  periodic orbits which are not
homotopic into any Seifert fibered piece.
2) If $M$ is atoroidal, the lemma is trivial.
3) Notice that $\alpha$ may be homotopic
into several pieces. If that happens then $\alpha$ is homotopic
into a torus $T$ which  is boundary of two pieces.
Finally, if $\alpha$ is homotopic into a third piece, then $\alpha$ has
to be homotopic through a piece $P$. The piece $P$ cannot be
atoroidal, because an atoroidal piece is a cusped hyperbolic
manifold and consequently acylindrical. Hence $P$ is Seifert
and since $\alpha$ is homotopic to two distinct boundary
components of $P$, it follows that $\alpha$ and the fiber
of $P$ have common powers. In that case the piece $P$ has
to be a periodic piece. Then $\alpha$ is homotopic into
any other piece $P_1$ which intersects $P$. Notice that $\alpha$
cannot be homotopic into any additional piece: otherwise
$\alpha$ would be homotopic
through a second Seifert piece $P'$. That would force the
fibers in $P$ and $P'$ to be the same, which is impossible.

\begin{proof}{}
Let $T_1, ..., T_a$ be the cutting tori in a torus decomposition of $M$ $-$ with
complementary components $P_1, ..., P_b$, which are either Seifert fibered or 
atoroidal. By a small isotopy assume that $\alpha$ is
transverse to the collection $\{ T_i \}$.
Fix a lift $\widetilde \alpha$ to $\mi$ and 
let $g$ in $\pi_1(M)$ be associated to $\alpha$ so that
$g(\widetilde \alpha) = \widetilde \alpha$.
Consider the collection of all lifts of the $\{ T_i \}$.

%

\vskip .1in
\noindent
{\bf {Case 1}} $-$ Suppose that $\widetilde \alpha$ eventually stops
intersecting lifts of the $\{ T_i \}$.

Since $\alpha$ is closed, this shows that $\alpha$ is contained in
a component of the complement of $\{ T_i \}$.

\vskip .1in
\noindent
{\bf {Case 2}} $-$ Suppose that $\widetilde \alpha$ keeps
intersecting a fixed lift $\widetilde T$ in
points $p_k = \wwp_{t_k}(p_0)$ where $t_k$ converges to infinity.

Let $V$ be the tree, whose vertices are the components
$\mi -$ (lifts of $\{ T_i \}$) and edges are the 
lifts of $\{ T_i \}$.
Then $\pi_1(M)$ acts on $V$. 

By transversality, the intersection of $\alpha$ and $\{ T_i \}$ is
finite.
Up to subsequence we may assume that $\pi(p_k)$ is a single point.
The projection to $M$ of $\wwp_{[t_k, t_{k'}]}(p_0)$ is
the orbit $\alpha$ being traversed a number $n$ of times.
This shows that $\alpha^n$ is freely homotopic into some
$T_i$. 
It follows that 
$g^n$ preserves
an edge in $V$ and so does not act freely on $V$. 
Therefore $g$ also does not act freely on $V$.
There are two options:
If $g$ acts as an inversion in the tree  $V$, then it fixes an
edge associated to a lifted torus
$\widetilde T_*$ and then $\alpha$ is homotopic
into the torus $T_* = \pi(\widetilde T_*)$. Then we are done.
Otherwise $g$ fixes a vertex
in $V$ and hence $\alpha$ is homotopic into a piece
of the Seifert fibered decomposition.

\vskip .1in
\noindent
{\bf {Case 3}} $-$ $\widetilde \alpha$ intersects 
distinct lifts
$\widetilde T^j, j \in {\bf N}$ of elements in $\{ T_i \}$.

By the proof of case 2, it follows that the assumption of case 2 does
not hold. Therefore $\widetilde \alpha$ eventually stops intersecting
any single lift $\widetilde T$ of the $\{ T_i \}$.
In addition if distance between $\widetilde \alpha$ and any single
lift $\widetilde T$ does not converge to infinity as time goes to 
infinity then: up to subsequence we may may assume there are
$p_k$ in $\widetilde \alpha$ with $d(p_k, \widetilde T)$ bounded.
We may then assume that $\pi(p_k)$ converges in $M$ and up to a
small adjustment and subsequence we may assume that 
$\pi(p_k)$ is constant. 
In addition $p_k$ is a bounded
distance from $z_k$ in $\widetilde T$. Up to another subsequence
assume that $\pi(z_k)$ converges in $M$ and since $\pi(T)$ is compact,
we may assume that $\pi(z_k)$ is constant. 
The projection of $\wwp_{[t_k,t_{k'}]}$ is 
$\alpha$ being traversed $n$ times. 
The projection of an arc in $\widetilde T$ from $z_k$ to $z_{k'}$ is
a closed curve in $T$. Up to another subsequence assume that the
geodesic arcs from $z_k$ to $p_k$ have images in $M$ which are
very close. This produces a free homotopy from $\alpha^n$ and
a closed curve in $\pi(T)$. Now the proof is exactly as
in Case 2.

Hence assume that $d(p_k, \widetilde T)$ converges to infinity for
any fixed lift $\widetilde T$.
If $\widetilde \alpha$ keeps returning to the same component
of $\mi -$ (lifts of $\{ T_i \}$), then some power of $\alpha$
preserves this component and an argument as in case 2 finishes
the proof.

Finally we can assume that $\widetilde \alpha$ crosses $\widetilde T^j$ for
each $j$ and eventually switches from one component of 
$\mi - \widetilde T^j$ to the other. There is a smallest
separation distance $a_0 > 0$ between any two lifts of
$\{ T_i \}$.
Homotope each $T_i$ to a Birkhoff Torus,  union of 
Birkhoff annuli $\{ B_m \}$ and
lift these homotopies to $\mi$. Each point is moved at most a constant
$a_1$. 
Fix $j$ and let $j'$ vary.
The fact that $d(\widetilde T^j, \widetilde T^{j'})$
goes to infinity means that $\widetilde \alpha$ has to cross some lift 
$\widetilde B_m$ of some Birkhoff annulus
$B_m$ and cannot be contained in $\widetilde B_m$.
But this is a contradiction because the orbits intersecting the interior
of a Birkhoff annulus are never singular.
This finishes the proof of lemma \ref{singint}.
\end{proof}

\noindent
{\bf {Remark}} One can also prove this by using group actions on trees
more extensively: the element $g$ either fixes a point, or $g$
has an inverted
edge or acts freely on $V$. To use this, further work is needed,
for instance in the first case, one needs to 
find a vertex $p$ in $V$, fixed by $g$ so that $\widetilde
\alpha$ has a point in the region associated to $p$ and similarly
for the other cases.

\begin{theorem}{}{}
Suppose that $M$ is orientable and
that $\Phi$ is not product.
Let $T$ be an embedded, incompressible
torus in $M$. Then either 1) $T$ is isotopic to an embedded
Birkhoff torus, or
2) $T$ is homotopic to a weakly embedded Birkhoff 
torus and contained in a periodic Seifert
fibered piece, or
3) $T$ is isotopic to the boundary of the tubular neighborhood of an embedded Birkhoff-Klein bottle
contained in a free Seifert piece.
\label{embedd}
\end{theorem}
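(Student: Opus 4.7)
The plan is to analyse $T$ through its $\pi_1$-subgroup $G := \pi_1(T) \cong {\bf Z}^2$, extract its invariant chain of lozenges in the orbit space and build a Birkhoff representative, then invoke Waldhausen's theorem (two homotopic embedded incompressible surfaces in an irreducible $3$-manifold are ambient isotopic) to upgrade homotopy to isotopy. Since $\Phi$ is not product, Lemma \ref{le:Z2} provides a bi-infinite $G$-invariant chain of lozenges ${\mathcal C} \subset \oo$, and Lemma \ref{immers} realises $G$ as $\pi_1$ of a Birkhoff torus homotopic to $T$. Let $H \supseteq G$ be the full $\pi_1(M)$-stabiliser of ${\mathcal C}$; by Lemma \ref{stachain}, $H$ embeds in $\pi_1(K)$ with maximal abelian subgroup $H_0$ of index at most $2$, and since $G \cong {\bf Z}^2 \subseteq H_0$ we may identify $G = H_0$.

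The argument then splits along two overlapping dichotomies: whether $H$ is abelian or $H \cong \pi_1(K)$, and whether ${\mathcal C}$ is simple in the sense of Definition \ref{def:string}. If $H \cong \pi_1(K)$, I would first check that ${\mathcal C}$ must be simple here (orientation-reversing elements of $H$ acting on ${\mathcal C}$ are incompatible with the non-simple string-of-lozenges picture of Proposition \ref{toriem}), then apply Proposition \ref{pro:Csimple} to $H$ itself to obtain an embedded Birkhoff--Klein bottle $K_0$ in the quotient $\mi/H$; its image $\pi(K_0) \subset M$ is weakly embedded, and the boundary of a regular tubular neighbourhood of $\pi(K_0)$ is a torus homotopic to $T$ (as $G=H_0$ has index $2$ in $H$). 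Case 3 follows once I rule out $\pi(K_0)$ lying in a periodic Seifert piece: the regular fibre of such a piece would have to commute with an orientation-reversing element of $H$, forcing $H$ to be abelian, a contradiction. If instead $H = G$ and ${\mathcal C}$ is simple, Proposition \ref{pro:Csimple} produces a weakly embedded Birkhoff torus $S$ homotopic to $T$; the only obstructions to $S$ being actually embedded are the external identifications labelled II and III in the remark after Proposition \ref{pro:Csimple} (some $g \in \pi_1(M)\setminus G$ mapping a corner, respectively a lozenge, of ${\mathcal C}$ to another corner, respectively lozenge). If no such $g$ exists, $S$ is embedded and Waldhausen's theorem yields Case 1; if such $g$ exists, I would reuse Claim 2 of the proof of Proposition \ref{toriem} to build a non-abelian centraliser of an element of $G$, and Jaco--Shalen theory then puts $T$ inside a Seifert piece $P$ whose fibre is forced, by the surplus identification, to be freely homotopic to a periodic orbit---so $P$ is periodic and we are in Case 2. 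Finally, when ${\mathcal C}$ is not simple, Proposition \ref{toriem} already lodges $G$ in a free Seifert piece via a string of lozenges, and an analysis modelled on the geodesic flow on a non-orientable hyperbolic surface from the examples section (whose fibre torus is exactly the boundary of a tubular neighbourhood of a Birkhoff--Klein bottle) reduces this case to Case 3 as well.

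The hardest point will be the abelian/simple subcase above, where distinguishing Cases 1 and 2 requires corralling all external lozenge identifications into a single periodic Seifert piece. This step intertwines the centraliser arithmetic of chain stabilisers (Lemmas \ref{stachain} and \ref{le:uniqueC}) with Jaco--Shalen characteristic submanifold theory, and relies crucially on the orientability of $M$ to rule out unwanted orientation-reversing deck transformations that would otherwise force one back into the Klein bottle picture of Case 3.
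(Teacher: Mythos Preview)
Your proposal has a genuine gap in the non-simple case, and this is precisely where the hypothesis that $T$ is \emph{embedded} (rather than merely $\pi_1$-injective) must enter. You invoke Proposition~\ref{toriem} to place $G$ in a free Seifert piece and then assert that ``an analysis modelled on the geodesic flow'' reduces this to Case~3. But Proposition~\ref{toriem} only tells you $\mathcal{C}$ is a string of lozenges and $G\subset\pi_1(P)$ for some free Seifert piece~$P$; it gives no Birkhoff--Klein bottle, and there is no reason for one to exist. Indeed, the prototypical non-simple example is the Birkhoff torus over a \emph{non-simple} closed geodesic in a geodesic flow: that torus is only immersed, and is not isotopic to the boundary of any tubular neighbourhood of an embedded Klein bottle. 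The paper instead proves (Step~1) that embeddedness of $T$ \emph{forces} $\mathcal{C}$ to be simple: if $\beta=g(\alpha)$ lies in the interior of a lozenge of $\mathcal{C}$, then the periodic orbit $\pi(\beta)$ is freely homotopic into $T$, hence (since $T$ is two-sided in the orientable $M$) can be pushed off $T$; but $\beta$ crosses the separating lift $\widetilde{T}_*$ transversally once, and a distance argument between $\widetilde{T}$ and $\widetilde{T}_*$ then yields a contradiction. You never use embeddedness in this way, so your non-simple branch is unsupported.

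Two smaller issues. First, in the $H\cong\pi_1(K)$ subcase you try to rule out periodic Seifert pieces by arguing that the regular fibre commuting with an orientation-reversing element of $H$ would force $H$ abelian; this is false, and in fact the Klein bottle situation legitimately occurs in \emph{both} periodic pieces (landing in Case~2, e.g.\ the Bonatti--Langevin example) and free pieces (Case~3). Second, your appeal to ``Claim~2 of Proposition~\ref{toriem}'' to produce a non-abelian centraliser when some $g\notin G$ sends a corner of $\mathcal{C}$ to a corner does not apply: that claim handles the case where $g(\alpha)$ lies in the \emph{interior} of a lozenge. The paper instead enlarges to the stabiliser $H$ of the full tree $\mathcal{G}(\alpha)$ (not just of $\mathcal{C}$), takes $H_0=\ker(H\curvearrowright\mathcal{G}(\alpha))$ and its centraliser $H'$, and splits on whether $H'$ is abelian; Jaco--Shalen is invoked only when $H'$ is non-abelian.
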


\begin{proof}{}
Using proposition~\ref{pro:Csimple},
let $\Upsilon_0: T \to M$ be an immersed Birkhoff torus homotopic to $T$
and let $T_* = \Upsilon_0(T)$.
Let ${\mathcal C}$ be the chain of lozenges invariant under $\pi_1(T)$ and associated
with the torus $T^*$ (a priori there could be two $\pi_1(T)$-invariant chains, if they
are scalloped).
The proof of this proposition has similarities with 
that of proposition \ref{toriem}, but notice that some of
the conclusions 
are {\underline {opposite}}.


\vskip .1in
\noindent
{\bf {Step 1}} $-$ {\underline {Claim}}:
 $\mathcal C$ is simple.

Suppose this is not true.  Then there is a corner orbit
$\alpha$ in ${\mathcal C}$ and $f$ in $\pi_1(M)$ with $f(\alpha) = \beta$
intersecting the interior of a lozenge in ${\mathcal C}$.
Let $g$ be a generator of the isotropy group of $\beta$. Let 
$\widetilde T_*$ be the lift of $T_*$ to $\mi$ which 
is invariant under $\pi_1(T)$. Similarly let $\widetilde T$ be the
lift of $T$ invariant under $\pi_1(T)$.
Then $\beta$ intersects $\widetilde T_*$ in a single point
$p$.  Let $\beta^+$, $\beta^-$ be the two rays of $\beta$ defined by $p$. 
Notice that $\widetilde T_*$ is embedded and separates $\mi$.
Hence $\beta^+$ and $\beta^-$ are in distinct
components of $\mi - \widetilde T_*$.
In addition $\widetilde T$ also separates $\mi$.

Assume that $\beta^+$ and $\beta^-$ are not at bounded distance from $\widetilde T_*$:
for any $R > 0$, there are points $q_R^-$, $q_R^+$ in 
$\beta^-$, $\beta^+$, each at distance $>R$ from 
$\widetilde T_*$. But $T$ and $T_*$ are freely homotopic: there is some $R_0$ such that $\widetilde T$ is contained
in the $R_0$-neighborhood of $\widetilde T_*$: for any $R>2R_0$, any path joining a point $q^-$ to a point $q^+$ such that $d(q^\pm, q^\pm_{R}) < R$ 
must intersect $\widetilde T$. 

On the other hand, 
the closed orbit $\pi(\beta)$ is homotopic in $M$ to a curve 
in $T$. But $T$ is embedded and $M$ is orientable, so $T$ is two sided
and $\pi(\beta)$ is homotopic
to a curve disjoint from $T$ (it is crucial that $T$ is embedded
here!). Lift this to a homotopy from $\beta$ to a curve
$\beta_1$ disjoint from $\widetilde T$. The homotopies from
from $\beta$ to $\beta_1$
move points a bounded distance. Hence, there is a positive number $r$ such that
for every $R>0$, there are points $m^\pm_R$ on $\beta_1$
such that $d(m^\pm_R, q^\pm_{R}) < r$. Take $R>2R_0$, $R>r$: according to the above,
the segment in $\beta_1$ between $m^-_R$ and $m^+_R$ must intersect $\widetilde T$. Contradiction.

Therefore, one of the two rays (say $\beta^+$) is at bounded distance $\leq a_1$
from $\widetilde T_*$.
Consider a sequence of points 
$p_i =g^{n_i}(p)$ in $\beta^+$ which all
project to the same point $\pi(p_1)$ in $M$.

Let $q_i$ in $\widetilde T_*$ a distance $\leq a_1$ from $p_i$. Up
to subsequence assume that $\pi(q_i)$ converges in $M$.
Since $T_*$ is compact, we can assume that $\pi(q_i)$ is constant.
Now up to another subsequence assume that there are geodesic
segments $u_i$ in $\widetilde M$ from $p_i$ to $q_i$ so that
$\pi(u_i)$ converges in $M$.
Again by small adjustments we can assume that $\pi(u_i)$ is constant
for $i$ big. Consider the following closed curve in 
$\mi$: a segment in $\beta$ from $p_i$ to $p_k$, \  $k > i$, then
the segment $u_k$, then a segment in $\widetilde T_*$ from
$q_k$ to $q_i$ and finally a segment from $q_i$ to $p_i$
along $u_i$. Since $\pi(u_i) = \pi(u_k)$ this projects
to a free homotopy from a power of the loop $\pi(\alpha)$ to a closed
curve in $T_*$. In other words, $g^n(q_i)=q_k$ for some
$n$ in ${\bf Z}$. Hence for some $n$ different from $0$, $g^n$ leaves
$\widetilde T_*$ invariant.

But this implies that $g^n$ leaves ${\mathcal C}$ invariant. Since $g^n(\beta) = \beta$,
then $g^n$ leaves invariant the lozenge $C$ of ${\mathcal C}$ containing $\beta$
in its interior
and $g^n$ is not the identity. But then $g^n$ does not leave
invariant any orbit in the interior of $C$ $-$ contradiction to it
leaving $\beta$ invariant. This proves the claim.

\vskip .1in
Let $G = \pi_1(T)$.
According to proposition~\ref{pro:Csimple} we can choose the Birkhoff torus $\Upsilon_0: T \to M$
weakly embedded. As we already observed, if this Birkhoff torus is not embedded, some element
$g$ of the set $(\pi_1(M) - \pi_1(T))$ maps 
a corner of $\mathcal C$ to a corner of $\mathcal C$. Our strategy is to enlarge
$G$ to a bigger subgroup of $\pi_1(M)$, containing all these elements.

Let $\mathcal G$ denote the tree ${\mathcal G}(\alpha)$ where $\alpha$ is a corner in $\mathcal C$.
The chain 
$\mathcal C$ corresponds to a $G$-invariant line in $\mathcal G$. 
Let $H$ be the subset of $\pi_1(M)$
of those $h$ such that 
there is a vertex $\beta$ of $\mathcal G$ such that $h(\beta)$ is also
a vertex of $\mathcal G$. In particular ${\mathcal G}(\beta) = {\mathcal G}(\alpha) 
= {\mathcal G}(h(\beta))$.
Then, for every $h$ in $H$:

$$ h({\mathcal G}(\alpha)) = 
 h({\mathcal G}(\beta)) =  {\mathcal G}(h(\beta)) = 
{\mathcal G}(\alpha),$$

\noindent
hence $H$ is the stabilizer of $\mathcal G$. In particular $H$ is a 
subgroup of $\pi_1(M)$.



Let $H_0$ the subgroup of $H$ acting trivially on $\mathcal G$: $H_0$ is a cyclic normal
subgroup of $H$, generated by an element $h_0$. 
Let $H'$ be the centralizer of $H_0$ (or $h_0$) 
in $H$: it is a normal subgroup of $H$ of index at most $2$.


\vskip .1in
\noindent
{\bf {Step 2}} $-$ The case where $H'$ is abelian.

Here $H' \cong {\bf Z}^2$.
Since $G \cap H'$ has finite index in
$H'$ which is abelian, it follows that 
$\mathcal C$ is $H'$-invariant (Corollary~\ref{cor:uniqueC} ). Now since $H'$ is normal
in $H$, the same result shows that $\mathcal C$ is $H$-invariant.
By Lemma~\ref{stachain},
$H$ is isomorphic to ${\bf Z}^2$ or $\pi_1(K)$ $-$ since it contains
a ${\bf Z}^2$. 
This is the crucial conclusion in this case.

Apply Proposition~\ref{pro:Csimple} to $H$ using that $\mathcal C$ is
simple:
there is a weakly embedded closed Birkhoff
surface $\Upsilon_1: S \to M$ with $(\Upsilon_1)_*(\pi_1(S))=H$. 
It follows from the discussion following lemma
\ref{immers} that $\Upsilon_1: S \to M$ is an embedding, since any element 
of $\pi_1(M)$ mapping a corner of $\mathcal C$ to a corner of $\mathcal C$ lies in $H$.

Suppose first that $S$ is a torus, that is, $H$ is isomorphic to ${\bf Z}^2$.
If $S$ is one sided, then $M$
is non orientable, contrary to hypothesis. Therefore there is a neighborhood
$N$ of $S$ homeomorphic to $S \times [0,1]$.
As the initial embedded torus $T \subset M$
is homotopic into $N$, it now follows from classical $3$-dim topology \cite{He}
that $T$ is homotopic and in fact  isotopic to the embedded Birkhoff torus
$\Upsilon_1(S)$. 
In other words, $T$ is isotopic to
an embedded Birkhoff torus: we are done (case 1) of the statement of the proposition).

Consider now the case where $S$ is the Klein bottle. Since
$M$ is oriented, $\Upsilon_1(S)$
admits a tubular neighborhood $U$ in $M$
diffeomorphic to the non-trivial line bundle over $K$. The boundary of $U$
is an embedded torus $T'$. As above $T$ is homotopic into $U$ and
has to be homotopic and in fact isotopic to $T'$.

Now observe that $U$ is a Seifert submanifold which is not a product
of surface cross interval. It follows that $U$ is contained in a Seifert piece $P$ 
of the torus decomposition of $M$ (that is $S$ is not in the boundary of
two intersecting atoroidal/hyperbolic pieces). If $P$ is periodic then proposition
\ref{simple} implies that $T$ is homotopic to a 
weakly embedded Birkhoff torus $-$ this is case 2) of the 
statement of the proposition. 
If $P$ is not periodic then we are in case 3).
We are done in this case.

\vskip .1in
\noindent
{\bf {Step 3}} $-$ The case where $H'$ is not abelian.

Since $H'$, contained in the centralizer of $H_0$, is not abelian, 
lemma VI.1.5 of \cite{Ja-Sh} shows that there is a Seifert fibered piece
$P$ of the torus decomposition of $M$ so that $H' \subset \pi_1(P)$. 
Let $t$ be a representative of the regular fibers
of $\pi_1(T)$. Then the centralizer $Z(t)$ of $t$ in $\pi_1(P)$ 
(the characteristic subgroup) has index at most 2 in $\pi_1(P)$.
Since $H' \subset \pi_1(P)$, the centralizer $Z(t)$ contains a finite index subgroup $G''$
of $G$. Hence, according to corollary~\ref{cor:uniqueC}, $t$ 
preserves the chain $\mathcal C$: in particular $t$ belongs to $H$.

Assume that $P$ is periodic, ie. that $t$ can be selected acting non-freely on $\oo$. Then, 
according to proposition~\ref{pro:Csimple}, $T$ is homotopic to a weakly embedded Birkhoff torus,
contained (up to homotopy) in $P$. We are in case 2) of the proposition.
Notice that in general there may be identifications in the boundary orbits
as already described.
A priori any of problems I), II) or III) described after lemma \ref{immers}
may occur.

The last case to consider is the case where
$t$ acts freely on $\oo$. Then $\mathcal C$ represents the axis of $t$
in the tree ${\mathcal G}$. 
When $t$ acts freely it may not leave invariant a unique chain of lozenges,
for example as happens in the geodesic flow case.
However the key fact here is that $H$ preserves $\mathcal G$ and then 
$\mathcal C$ is the unique axis of $t$ in $\mathcal G$.
Since $H'$ is contained in $\pi_1(P)$, 
some finite index normal subgroup $H''$ of $H$
is contained in $Z(t)$. 
For any $g$ in $H''$, then $tg(\mathcal C) = g(\mathcal C)$ so by the 
uniqueness above, $g(\mathcal C) = \mathcal C$,
or $\mathcal C$ is preserved by $H''$. Since $H''$
is normal in $H$
then again it follows that $H$ preserves 
$\mathcal C$.
Now we conclude almost as in step 2: if $H$ is isomorphic to ${\bf Z}^2$ then
$H'$ is abelian, contradiction to assumption in case 3).
If $H$ is isomorphic to $\pi_1(K)$ then 
$T$ is isotopic to the boundary of a tubular neighborhood of an embedded Birkhoff-Klein bottle
contained in $P$, which can be periodic (case 2) or free (case 3)).
\end{proof}

\noindent
{\bf {Remark:}}
We remark that tori homotopic to a double cover of a Birkhoff-Klein bottle appearing
in step 2 and 3 actually occur in the free case and in the periodic case too. 
The periodic case occurs for example in the Bonatti-Langevin flow \cite{Bo-La}.
An example of the free case was described in the remark at the end of 
section \ref{seifconj}.

\vskip .1 in
\noindent
{\bf {Remark:}} \ The hypothesis of orientability for $M$ in 
proposition \ref{embedd} occurs because several results for
torus decompositions and maps of Seifert spaces into manifolds
are only
clearly stated in the literature for orientable manifolds,
for example \cite{Ja-Sh}.

\section{Periodic Seifert fibered pieces}\label{sec:perio}

This section is devoted to the proof of theorem F
$-$ in particular we assume that $M$ is orientable.
Let $P$ be a (non trivial) Seifert fibered piece
of a $3$-manifold $M$ with a pseudo-Anosov flow $\Phi$. 
We will analyse here only the case
that the regular fiber $h_0$ in $\pi_1(P)$ does not act freely on $\oo$, that is
$P$ is a periodic piece.
By theorem \ref{Seifert} this implies that $P$ is not all of $M$.
We start by constructing a canonical tree of lozenges associated to $P$.
First consider the action on $\oo$: there is $\alpha$ in $\oo$ with
$h_0(\alpha) = \alpha$. Let ${\mathcal T}$ be the fat tree ${\mathcal G}(\alpha)$.
Given $g$ in $\pi_1(P)$, then \ $g h_0 g^{-1} = h_0^{\pm 1}$ \ so
$h_0 g(\alpha) = g(\alpha)$ \ and $g(\alpha)$ is in ${\mathcal G}(\alpha)$.
It follows that $\mathcal T = {\mathcal G}(\alpha)$ is a
$\pi_1(P)$-invariant tree. The kernel of the $\pi_1(P)$-action on
${\mathcal T}$ is a normal cyclic 
subgroup $H_0$ of $\pi_1(M)$, which contains a non-trivial power $h_0^n$
of $h_0$ (cf. proposition~\ref{pro:treefacts}).

Notice that there is at least a ${\bf Z} \oplus {\bf Z}$
in $\pi_1(P)$ so there are elements in $\pi_1(P)$ acting
freely on $\mathcal T$.
We now go through several steps to produce a normal
form of the flow in $P$.

\vskip .1in
\noindent
{\underline {Pruning the tree ${\mathcal T}$}} 

We first construct a subtree of ${\mathcal T}$ which is still $\pi_1(P)$-invariant
and has no vertices of valence one. Given
$g$ in $\pi_1(P)$ acting freely on ${\mathcal T}$ let $A(g)$ be the axis of
$g$ in ${\mathcal T}$.
Let now ${\mathcal T}'$ be the union of all axes
$A(g)$, for all $g$ in $\pi_1(P)$ acting freely on $\oo$.
Clearly ${\mathcal T}'$ is $\pi_1(P)$ invariant and has no vertices
of valence one, since they are all in axes. All that is left to
prove is that ${\mathcal T}'$ is connected and hence a subtree.

Let $c_0, c_1$ in ${\mathcal T}'$ so that there are $f, g$ in $\pi_1(P)$
with $c_0$ in $A(f)$, $c_1$ in $A(g)$.
If $A(f), A(g)$ intersect, then there is a path in ${\mathcal T}'$
from $c_0$ to $c_1$.
Suppose then that they do not intersect.
There is a well defined bridge in
${\mathcal T}$ from $A(f)$ to $A(g)$ denoted by
$[x,y]$ $-$ it
is a closed segment intersecting $A(f)$ only in the extremity $x$
and intersecting $A(g)$ only in $y$. Let $z = f^{-1}(x)$.
Consider the element $gf$ which is in $\pi_1(P)$.
Then $x$ separates $z$ from $y$ and so separates $z$ from
$g f(x)$ which is in $g A(f)$.
Also $g f(z) = g(x)$ separates $x$ from $g f(x)$ which is in
$g A(f)$. It follows that $z, x, gf(z)$ and $g f(x)$ are all
distinct and linearly ordered in a segment contained in ${\mathcal T}$.
Hence $gf$ acts freely on ${\mathcal T}$ and $x, gf(x)$ are in
$A(gf)$. In particular $x$ and $y$ are in $A(gf)$ contained
in ${\mathcal T}'$ so there is
a path in ${\mathcal T}'$ from $c_0$ to $c_1$.
This shows that ${\mathcal T}'$ is connected.

\vskip .1in
\noindent
{\underline {Weakly embedded Birkhoff annuli}}

Suppose there is a vertex $q$ of ${\mathcal T}'$ and an element $g$ of 
$\pi_1(M)$ (not necessarily in $\pi_1(P)$) and a lozenge 
$C$ in ${\mathcal T}'$ with $g(q)$ intersecting the interior of $C$.
The lozenge $C$ is part of an axis $A(f)$ for some $f$ in 
$\pi_1(P)$. 
Here $g(q)$ intersects $C$. By the remark
after 
proposition \ref{toriem}, it follows that 
the subtree ${\mathcal T}'$ 
is a string of lozenges. 
Then $f, h_0^{2n}$ generate a ${\bf Z} \oplus {\bf Z}$ subgroup of $\pi_1(M)$
preserving this string of lozenges. Moreover, 
$q$ is a vertex of ${\mathcal T}'$ and $g(q)$ is in the interior of
$C$. Proposition \ref{toriem} again implies that the piece
$P$ has to be a free piece $-$ contrary to assumption in this case.

We conclude that each lozenge in ${\mathcal T}'$ corresponds to a weakly embedded
elementary Birkhoff annulus in $M$. 
We want to show that the union of the Birkhoff annuli can be adjusted
to be embedded in the interiors. 

\vskip .1in
\noindent
{\underline {Weakly embedded union of Birkhoff annuli}}

As in the proof of Proposition~\ref{embedd}, we consider the stabilizer $H$ in $\pi_1(M)$ of
${\mathcal T}'$. The action of $H$ on ${\mathcal T}'$
is not faithful, since the kernel contains 
a non trivial group.
Therefore, $H$ contains an infinite cyclic normal subgroup, but also contains
$\pi_1(P)$. It follows that $H=\pi_1(P)$, since $P$ is a maximal Seifert piece.
In addition the same arguments show that the stabilizer in $\pi_1(M)$ of $\mathcal T$
is also $\pi_1(P)$.

Suppose that $g$ in $\pi_1(M)$ maps a 
vertex $\alpha$ of ${\mathcal T}'$ to a vertex of ${\mathcal T}'$. 
Hence it also sends a vertex of $\mathcal T$ to a vertex of $\mathcal T$.
In that case we already observed during the proof of proposition~\ref{embedd} that 
$g$ stabilizes ${\mathcal T}$ and hence belongs to
$\pi_1(P)$.

Consider the quotient of the tree ${\mathcal T}'$ by $\pi_1(P)$. It is a graph, that we denote by $A$.
Since it is a graph, the fundamental group of $A$ is a free 
group, and since 
$\pi_1(P)$ is finitely generated, then the fundamental group
of $A$ has finite rank. Moreover, 
by construction, $A$ does not contain an infinite ray (since every element of ${\mathcal T}'$ lies
on the axis of some element of $\pi_1(P)$ acting freely on ${\mathcal T}'$). 
It follows that $A$ is a finite graph.

Consider a
fundamental domain of the action of 
$\pi_1(P)$ on ${\mathcal T}'$. We lift every lozenge of this fundamental domain to a Birkhoff band in $\mi$,
and then lift all other lozenges in ${\mathcal T}'$ in a $\pi_1(P)$-equivariant way. It projects to
an union of weakly embedded Birkhoff annuli in $M$. Once more,
we can then use cut and paste techniques of
\cite{Ba2} to have the union of the Birkhoff annuli
to be embedded in the interior of the annuli $-$ with
possible identifications in the boundary orbits.

\vskip .1in
\noindent
{\underline {Flow adapted neighborhoods of periodic pieces}}

Let $B$ be the union of the weakly embedded elementary Birkhoff annuli as in the
previous item. 
Let U be the neighborhood of $B$ obtained by taking a tubular neighborhood of every
periodic orbit contained in $B$ (the ``tangent periodic orbits''), attaching to them tubular
neighborhoods of the elementary Birkhoff annuli.
Topologically, this corresponds to the following: start with a
finite collection of solid tori and attach several handles
diffeomorphic to $[-1,1] \times [-1,1] \times {\bf S}^1$, where in each handle, $\{ 0 \} \times [-1,1] \times {\bf S}^1$
is contained in the corresponding weakly embedded Birkhoff annulus. Handles attached to a given solid torus (corresponding
to one of the tangent periodic orbits) are pairwise disjoint. 
One can perform a Dehn surgery on $U$ along tangent periodic orbits so that now the handles are attached
along longitudes of the solid tori: we get a $3$-manifold $U'$ which is clearly a circle bundle over
a surface with boundary $\Sigma$. Moreover, $\Sigma$ retracts to the graph $A.$ 

It follows that $U$ is diffeomorphic
to a Seifert manifold, obtained by Dehn surgeries around fibers in $U'$ above vertices of $A$.
More precisely, there is a Seifert fibration $\xi: U \to \Sigma_*$ where $\Sigma_*$ is an orbifold, whose
singularities correspond to vertices of $A$; singular fibers are tangent periodic orbits where attached
Birkhoff annuli wrap non trivially. 

Now observe that $U$ is the projection of a ``tubular neighborhood'' in $\mi$ of Birkhoff bands, which
is homeomorphic to the product of the tree ${\mathcal T}'$ by ${\bf R}$. This neighborhood is therefore
simply connected, and $U$ is an incompressible Seifert submanifold with fundamental group isomorphic
to $P$. Therefore, $P$ is isotopic to $U$. This achieves the proof of Theorem F.

\vskip .1in
\noindent
{\bf {Remark}} $-$ The only periodic orbits contained in $U$ correspond
to the projections of the vertices of ${\mathcal T}'$.

Here is why:
The interiors of the finitely many Birkhoff annuli in question are 
transverse to $\Phi$ and so orbits intersecting these interiors 
exit $U$ if $U$ is sufficiently small.
The other orbits are in the solid tori neighborhoods.
If these neighborhoods are small enough then the only orbits entirely contained
in them are the core orbits.

In particular a singular orbit $\gamma$ cannot intersect the interior
of the Birkhoff annuli, hence either $\gamma$ is one of the periodic
orbits in $U$ or can be chosen disjoint from $U$ if $U$ is small.
Previously we had proved that a singular orbit is homotopic into
a piece of the torus decomposition. In a graph manifold, if a singular
orbit is homotopic into a free piece $Z$, we conjecture that 
it must be homotopic into the boundary of the piece $Z$.

\section{New classes of examples of pseudo-Anosov flows 
in graph manifolds}
\label{sec:examples}

In section \ref{seifconj} we
described  some new examples of (one prong) pseudo-Anosov
flows.
In this section we will describe two new
classes of examples, which are extremely interesting:
The first class consists of actual pseudo-Anosov flows. The examples
in the second
class, which is a much larger class,  may have one prongs.

\vskip .15in
\noindent
1) Consider the class of examples 1) of section \ref{seifconj}. Each example had
a 
$2$-fold branched cover which is the geodesic flow in $T_1 S$, where
$S$ is closed, hyperbolic and has a reflection along finitely
many geodesics.
For simplicity we assume here that $S$ has a single closed geodesic $\alpha$ of symmetry.
Let $N$ be the quotient manifold.
In $N$, there is a quotient annulus $C$ which is the branched quotient
of the unit tangent bundle of $\alpha$.
Now for any integer $n > 0$ we can do the $n$-fold branched cover of $N$ 
along $C$. If $n = 2$ this recovers the original geodesic flow.
Otherwise the boundary of $C$ lifts to 2 closed orbits which are
$n$-prongs. Let $M_n$ be this $n$-fold cover and $C'$ be the lift
of the annulus $C$. The set $C'$ cuts $M_n$ into Seifert fibered
pieces $-$ each a copy of $T_1 S'$, where $S'$ is one component of $S$
cut along $\alpha$ (notice both components of $S - \alpha$ are isometric by the symmetry
along $\alpha$).
Each of these components is a component (up to isotopy) of the torus
decomposition of $M_n$.
In each of these components the fiber acts freely on the orbit space,
so these are free pieces.
There is one additional Seifert component which is a small neighborhood of
$C'$. There is a planar graph $X$ which has 2 vertices (corresponding
to the 2 directions on the geodesic $\alpha$) and $n$ edges from one
vertex to the other. The set $C'$ is homeomorphic to $X \times {\bf S}^1$.
This is a Seifert fibered piece of $M_n$, where the fiber corresponds
to a periodic orbit $-$ this is a periodic piece.

This highlights an important fact: there are examples of 
graph manifolds $M$  supporting 
pseudo-Anosov flow $\Phi$,
so that in the torus 
decomposition of $M$ there are periodic pieces glued to free pieces.

\vskip .2in
\noindent
2) The next class of examples will be on graph manifolds where all pieces
are periodic. It is much more involved and much more interesting.

In the previous section we proved that the periodic Seifert pieces can
be obtained as neighborhoods of unions of Birkhoff annuli.
Here we will introduce standard models for certain neighborhoods
of Birkhoff annuli and then use them to produce many examples.

\vskip .1in
\noindent
{\underline {Model of neighborhood of an embedded Birkhoff annulus}}

Let $I = [-\pi/2, \pi/2]$. Let $N = I \times {\bf S}^1 \times I$ with coordinates 
$(x,y,z)$. Think of ${\bf S}^1$ as $[0, 1]/0 \sim 1$. Convention: the increasing
or positive direction in ${\bf S}^1$ corresponds to increasing in $[0,1]$.

For every positive real number $\lambda$, 
we consider the $C^{\infty}$ vector field $X_\lambda$ defined by:

\begin{eqnarray*}
\dot{x} & = & 0 \\
\dot{y} & =& \lambda\sin(x)\cos^2(z) \\
\dot{z} & = & \cos^2(x) + \sin^2(z)\sin^2(x)
\end{eqnarray*}

Let $\psi_\lambda$ be the local flow in $N$ generated by $X_\lambda$. It has 
the following properties:

\begin{itemize}

\item it preserves the fibration by circles $(x, y, z) \mapsto (x, z)$.

\item
There are only 2 closed orbits: 

$$\alpha_1 \ = \ \{ -\pi/2 \} \times {\bf S}^1 \times \{ 0 \}, \ \ \ \ 
\alpha_2 \ = \ \{ \pi/2 \} \times {\bf S}^1 \times \{ 0 \}.$$

\noindent
In $\alpha_1$ the flow is decreasing the $y$ coordinate
(in the flow forward direction) and 
in $\alpha_2$ the flow is increasing the $y$ coordinate.
Hence as oriented orbits, $\alpha_1$ is freely homotopic in $N$ to $(\alpha_2)^{-1}$.

\item
The flow is incoming and perpendicular to the boundary $I \times {\bf S}^1 \times \{ -\pi/2 \}$
and outgoing and perpendicular to $I \times {\bf S}^1 \times \{ \pi/2 \}$.
The flow is tangent to $\partial I \times {\bf S}^1 \times I$.

\item 
The annuli $x =$ constant are flow saturated. 

\item
The orbits in $\{ -\pi/2 \} \times {\bf S}^1 \times \{ -\pi/2 \}$ enter $N$ and 
spiral towards 
$\alpha_1$ in the negative $y$ direction. Hence in $N$, $W^s(\alpha_1) = \{ -\pi/2 \} \times
{\bf S}^1 \times [-\pi/2, 0]$. 
In $\{ -\pi/2 \} \times {\bf S}^1 \times (0, \pi/2]$ the orbits spiral 
(flow backwards) to
$\alpha_1$ in the positive $y$ direction, so
$W^u(\alpha_1) = \{ -\pi/2 \} \times {\bf S}^1 \times [0, \pi/2]$.
We have a similar behavior (with the $y$ coordinate increasing when moving 
flow forward)
in $\{ \pi/2 \} \times {\bf S}^1 \times I$.

\item
The flow is invariant under the any rotation in the $y$ coordinate:
$(x,y,z) \rightarrow (x, y+a, z)$ where the $y$ coordinate is $mod \ 1$.
The flow is invariant under $(x,y,z) \rightarrow 
(-x, \ - y \ (mod \ 1), \ z)$.
This is symmetry (I).

\item
Let $F_0 = (-\pi/2, \pi/2) \times {\bf S}^1 \times \{ -\pi/2 \},
F_1 = (-\pi/2, \pi/2) \times {\bf S}^1 \times \{ \pi/2 \}$,
both parametrized by the $x, y$ coordinates.
In $(-\pi/2, \pi/2) \times {\bf S}^1 \times I$ all orbits enter $N$ in
$F_0$ and exit $N$ in $F_1$. An easy computation shows
that the variation of time spent between the entrance and the exit is:

$$\Delta t = \frac\pi{\mid \cos(x) \mid}$$

There is an induced homeomorphism $f: F_0 \rightarrow F_1$ given
by the exit point in the $x, y$ coordinates.
It has the form

$$f(x,y) \ \ = \ \ (x, y + a(x)),$$

\noindent
where the function $a(x)$ is $C^{\infty}$ and depends only on $x$. It can
also be computed:

$$a(x) = \lambda\pi[ \tan(x) - \tan(x/2)]$$

Observe that $a(0) = 0$. In fact, the
orbits in the center annulus have $y$ coordinate constant.
Also, $a(x)$ converges to minus infinity when
$x$ converges to $-\pi/2$ and $a(x)$ converges to infinity
when $x$ converges to $\pi/2$. In addition,
$a(-x) = - a(x)$.

Finally:
$$a'(x) = \lambda\pi[\frac12 + (\tan^2(x) - \frac12\tan^2(x/2)] \geq \frac{\lambda\pi}2$$

\end{itemize}

\vskip .2in
By the formula above, the map $f$ is a non linear shearing in the
$y$ direction. The bigger the $\lambda$ the stronger the shearing.

One canonical Birkhoff annulus associated to the block $N$ is $B = [-\pi/2, \pi/2] \times {\bf S}^1
\times {0}$. If $\alpha_1, \alpha_2$ are traversed in the positive flow
direction
then $B$ is a free homotopy from $\alpha_1$ to $(\alpha_2)^{-1}$.
The flow is transverse to $B$ outside of $\alpha_1, \alpha_2$.
The formulas above are convenient and give explicit models, but
they are not essential:
Up to topological equivalence any embedded Birkhoff annulus has
a neighborhood with this description.

\begin{figure}
\centeredepsfbox{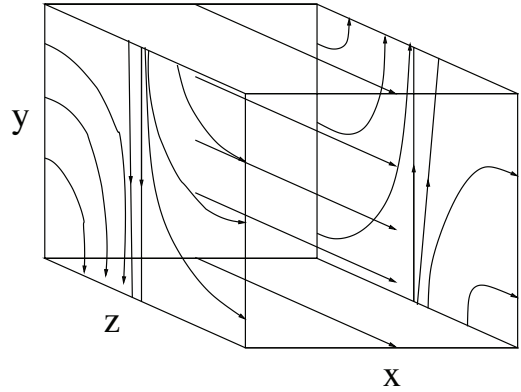}
\caption{Figure 4: The local flow $(N, \psi_\lambda)$. The top and bottom
are identified, that is, the vertical coordinate $y$ is defined modulo $1$.}
\label{box}
\end{figure}


\vskip .1in
\noindent
{\underline {Glueing the tangential boundaries of the blocks}} 

Observe that the same formula defines a vector field $\tilde{X}_\lambda$ 
on $\tilde{N} := {\bf R} \times {\bf S}^1 \times I$, which is $2\pi$-periodic
on the coordinate $x$. Actually, due to the invariance of $X_\lambda$ under the symetry (I),
we see that the transformation $\tau(x,y,z) = (x+\pi, -y, z)$ preserves $\tilde{X}_\lambda$.
The quotient of $\tilde{N}$ by the cyclic group generated by $\tau$ is a Seifert manifold $N_1$,
homeomorphic to the product $K \times I$, where $K$ is the Klein bottle. The induced
local flow has a single 1-prong singular orbit; $\tilde{N}$ has two boundary components,
one which is a incoming Klein bottle, the other an outgoing Klein bottle. 

More generally, we can take the quotient by the group generated by $\tau^k$ where $k$
is a positive integer. We get a Seifert 3-manifold $P_k$, diffeomorphic to $K \times I$ or ${\bf T}^2 \times I$
(according to the parity of $k$), with one incoming boundary component and one outgoing component,
containing exactly $k$ singular 1-prong periodic orbits.

Now, more generally, we can glue several copies of $(N, X_\lambda)$ in a much more involved way.
The blueprint encoding such a glueing will be a finite fat graph $X,$ ie. a graph embedded in a surface $\Sigma$
with boundary, such that $X$ is a retract of $\Sigma$ 
$-$ here, we do not require that $\Sigma$ be oriented. 





We moreover require the following conditions:

\vskip .1in
\begin{em}
Condition (I): the valence of every vertex is an even number.
\end{em}

\vskip .1in
\begin{em}
Condition (II): the set of boundary components of $\Sigma$ can be partitioned
in two subsets so that for every edge $e$ of $X$, the two sides of $e$ in $\Sigma$ lie
in different subset of this partition.
\end{em}


\vskip .1in
Use as labels ``incoming'' and ``outgoing''  for this partition of the set of boundary components of $\Sigma.$ 
Now every edge has an incoming side, and an outgoing side.

Given such information we construct a flow in a $3$-manifold.
Associate to every edge $e$ of $X$ a copy $N_e$ of $N$ as above.
Then, every {\em incoming} boundary component 
$c$ of $\Sigma$ corresponds to a cyclic sequence of edges $(e_1, e_2, ... , e_k)$. We glue all the associated
$N_{e_i}$ along the stable manifolds $\{ \pm\pi/2 \} \times {\bf S}^1 \times [-\pi/2, 0)$ in the same cyclic order;
more precisely, we map every point of coordinate $(\pi/2, y, z) \;\; (z <0)$ in $N_{e_i}$ to the point
of coordinate $(-\pi/2, -y, z)$ in the following copy $N(e_{i+1})$.
The result, for each boundary component $c$, is a Seifert $3$-manifold (with boundary and corner).
The Seifert $3$-manifold has
interior diffeomorphic to $P_k$ with the
unstable manifolds $\{ \pm\pi/2 \} \times {\bf S}^1 \times [-\pi/2, 0]$ 
and the incoming and outgoing boundaries removed. It has an incoming boundary component,
obtained by glueing
copies of closures of the incoming annulus $F_0$ for each $N_{e_i}$.
This boundary component is  diffeomorphic to the torus if $k$ is even
and to the Klein bottle if $k$ is odd. 
This manifold also has ``outgoing" annular components. Observe that up to diffeomorphism,
the result depends only on the 
cyclic order $(e_1, e_2, ... , e_k)$.

Next we do the similar glueing along outgoing boundary components, but now glueing the copies of $N$ 
along the unstable annuli. The result is a Seifert manifold $N(X)$, 
with incoming and outgoing components, but no tangential boundary components. Moreover,
to every vertex $v$ of $X$ corresponds  a tubular neighborhood of the 
periodic orbit which is homeomorphic to a solid torus.
The flow is obviously homeomorphic to a $p$-branched cover
of a tubular neighborhood of the singular orbit in $P_1$
$-$ here $2p$ is the valence of
$v$. 
This is a compact Seifert manifold.
Observe that $N(X)$ is orientable if and only if all $k$ are even.

By construction, $N(X)$ is equipped with a vector field $X_\lambda$ for every $\lambda > 0$. 
The boundary of $N(X)$ is an union of incoming components and outgoing components, which are tori or Klein bottles.
Due to the final process in the construction, this vector field is not smooth along the vertical
orbits corresponding to the vertices of $X,$ except if the valence of the vertex is 2 or 4, a special situation
where we can perform the glueing so that the vector field is smooth in the neighborhood of
the associated singular orbit. In particular, if all vertices have valence $4$, then there is no singular orbit.

This is exactly the case in the Bonatti-Langevin \cite{Bo-La} example, where the fat tree $X$
is a figure eight (with one vertex) embedded in a once-punctured M\"obius strip. 

\vskip .1in
\noindent
{\bf {Remark: }} 
Notice that $N(X)$ is a circle bundle over the surface $\Sigma$, with
fibers the vertical circles with constant $x$, $z$ components. Moreover, the local flow generated 
by $X_\lambda$ preserves this fibration, hence there is an induced vector field $\bar{X}_\lambda$ on
$\Sigma$. The vector field $\bar{X}_\lambda$ is Morse-Smale. Its singularities are the vertices of
$X$; it is transverse to $\partial\Sigma$. There is three types of non-singular trajectories of $\bar{X}_\lambda$:


-- trajectories in the stable line of a singularity, entering $\Sigma$,

-- trajectories in the unstable line of a singularity, exiting $\Sigma$,

-- trajectories joining two boundary components.

Observe that the data $(\Sigma, X)$ is equivalent to the data $(\Sigma, \bar{X}_\lambda)$ up to isotopy.

\vskip .1in
\noindent
{\underline {Glueing the transverse boundary components}} 

The next step is to glue outgoing boundary components to incoming boundary components.
Observe that these components are naturally isomorphic to boundary components in the manifolds
$N_k$, and thus admit natural coordinates $(x, y)$.

Let $T'$ be the union of the incoming boundary components and
let $T$ be the union of the outgoing boundary components. 
Let $\mu$ denote the line field in $T$ or $T'$ associated to $x$ being constant.
In order to perform the glueing, we have one obvious condition: there must be the same number
of outgoing and incoming tori, and the same number of outgoing and incoming Klein bottles! 

Under this condition,
we can select a map $A: T \rightarrow T'$ which is linear in the $x,y$-coordinates on each component.
The only assumption we will have is that $A$ does not preserve any 
of the line fields $\mu$.
Equivalently $A$ does not send any unstable manifold of the periodic
orbits to a curve isotopic into the stable manifold of a periodic
orbit. 

Given this condition
we first show that there are no components of $T$ which are Klein bottles. 
Suppose there is one such component denoted by $K_1$ to be glued to a component
$K_2$ of $T'$.
Notice that up 
to isotopy there are only two foliations by circles of the Klein bottle $K$.
One foliation has two circles which are orientation reversing and the nearby
leaves cover such a leaf two to one. The leaf space is a $1$-dim orbifold,
with two ``boundary" orbifold points of order $2$. This is type I.
The other
foliation comes from a product foliation by circles of the annulus
and glueing the boundaries by an orientation reversing homeomorphism.
This is type II.
Since there are only 
two such foliations up to isotopy and they are intrinsically different
(one has orientation reversing leaves and the other does not), then: any
homeomorphism between a Klein bottle $K$ and another $K'$ has to preverse each
type up to isotopy.

The construction of the flow shows that the line field $\mu$ 
induces foliations of type II in $K_1$ and $K_2$.
By the above explanation $A$ has to preserve the line field $\mu$ up
to isotopy, which we do not want. 
Hence we have a necessary condition:

\vskip .1in
\noindent
{\bf {Conclusion}} $-$ In order for the last step to produce a pseudo-Anosov flow,
then all the components of $T$ have to be tori.
\vskip .1in

In particular the manifolds in the middle step will 
all be orientable.

By glueing $T'$ onto $T$ by $A$ we obtain a closed $3$-manifold $M=M(X, A)$ 
equipped with a family of vector fields $Y_\lambda$.
Hence it provides a flow $\Psi_\lambda$ on
$M$ for each $\lambda>0$. The periodic orbits of $X_\lambda$ provide a finite number of 
periodic orbits of $\Psi_\lambda$ that we call {\em vertical orbits}.
Observe that since $X_\lambda$ is orthogonal
to the boundary, $Y_\lambda$ is smooth outside of the vertical orbits.

Our goal is to prove that, if $\lambda$ is big enough, 
then $\Psi_\lambda$ is pseudo-Anosov.

Let $l_0^u$ be the union of the circles in $T$ contained in the local unstable manifolds of the vertical orbits
(they are associated to the circles $x=\pm\pi/2$, $z=\pi/2$ in
each block), and similarly let $l_0^s$ be the union of the circles in $T'$ contained in
the local stable manifolds of the vertical orbits.
Let $\varphi$ be the first return map from a maximal subset of $T$ to itself.
Its domain is the complement in $T$
of $\mu^s_0=A^{-1}(l_0^s)$. 
For every $n>0$, let $\mu^s_n$ be the preimage of $\mu^s_0$ by $\varphi^n$: 
$T \setminus \mu^s_n$ is the domain of $\varphi^{n+1}$. Each component of $\mu^s_n$ is a 
curve in $T$, intersecting every circle 
in $l_0^u$, and spiraling around two circles in $\mu_0^s$. 
The complement $\Omega^+$ of the union $\mu^s_\infty$ of all $\mu^s_n$ is the 
domain of points where all the positive iterates 
$\varphi^n \;\;(n\geq 0)$ are defined. 
Observe that $\Omega^+$ may not be not open: it is the complement of 
$\mu^s_\infty$, which is an union of countably many
$1$-manifolds: the intersection with $T$ of the stable manifolds of the vertical orbits.

Let $C_0$ be a smooth small cone field on $T$, 
centered around $\mu$, and constant in the coordinates $x$, $y$.
 If $C_0$ is small enough,
then $A(C_0)$ is a cone field in $T'$ whose closure avoids the line field $\mu$
in $T'$. 
If in addition $\lambda > \lambda_0 >>1$, that is,
$a'(x) \geq \lambda\pi/2 > a_0 >> 1$, then the image of $A(C_0)$ across
the fundamental blocks will be very close to the constant $x$ direction
$-$ that is $\mu$. This is because $A$ is a linear map, so $A(C_0)$ is a definite
positive distance away
from the line field $\mu$.
In addition if the shearing is strong enough as above then the first return of
$A(C_0)$ will be very close to the line field $\mu$.
This implies that whenever $\varphi$ is defined, 
 then $\varphi_*(C_0)$ is strictly contained
in $C_0$. Moreover, this contraction 
from $C_0$ inside itself is uniform, since the bound from below of
$a'(x)$ is uniform. 
Furthermore: $\varphi_*(C_0) \subset C_0$ is close to $\mu$, hence
every tangent vector in $C_0$ 
has a non-trivial $y$-component, which is uniformly expanded 
by the differential of $\varphi$. It follows that, again increasing $\lambda_0$ if necessary,
all vectors in $\varphi_*(C_0)$ have a norm uniformly expanded under the differential
of $\varphi$, let us say have norm at least multiplied by $2$.

Given these properties, 
standart arguments (see for example \cite{Ha-Th}) 
show that the intersection of all iterates $\varphi^n_*(C_0)$
defines an invariant direction at every point of $\Omega^+$.
Vectors in this direction are uniformly exponentially expanded under
the action of $\varphi_*$. 

Consider now more closely the set $\mu^s_\infty$. 
Let $F$ be a component of the complement in $T$ of $l^u_0$: it is a copy of the
annulus $F_1$
(from the definition of model neighborhoods of Birkhoff annuli).
The intersection between $F$ and $\mu_0^s$ (after the glueing by $A$)
is an union of straight segments, with no tangent vectors
in $C_0$, and joining the two boundary components of $F$. 
The second generation curves, that is, the 
components of $\mu_1^s=\varphi^{-1}(\mu_0^s))$ are obtained by 
pushing backward the first generation lines through all blocks. These become
curves in $T'$ with direction very close to 
$\mu$ if the curves are close to $l^s_0$. Then apply $A^{-1}$: in every annular component $F$
they are still a union of curves joining the boundary of $F$, 
and these curves are nearly horizontal, that is, with tangent directions outside
$C_0$. 
Iterating the argument, we get that every 
connected component of $\mu^s_\infty$ has these properties:
in every annular component $F$, it is a disjoint union of 
graphs $y=g(x)$ of smooth functions, with uniformly bounded derivative
$g'$.
They are of course all included in the stable manifold of vertical orbits.

\vskip .1in
\noindent
{\underline {Claim}} $-$ $\Omega^+$ has empty interior

This is the key property.
Suppose this is not true and let
$q$ be a point in the interior of $\Omega^+$. Its positive orbit intersects
$T$ infinitely many times; hence there is an annular component $F$ 
of $T - l^u_0$ visited infinitely many times.


Consider now all paths $c$ in $\mbox{Int}(\Omega^+)$,
with tangent directions contained in $C_0$. Due to the description above, the length of these paths
is uniformly bounded from above. 

On the other hand, let $c$ be such a path containing $q$. 
There are infinitely many iterates
$\varphi^{n_k}(q)$ contained in $F$. Since $c$ is connected, and since the image of $\varphi$
avoids $l^u_0$, the paths $\varphi^{n_k}(c)$ are all contained in $F \cap \mbox{Int}(\Omega^+)$.
But they all have tangent vectors contained in $C_0$ as $\varphi_*(C_0) \subset C_0$,
and their length is 
exponentially increasing as proved above: 
contradiction. The claim is proved.

\vskip .1in

It follows that $\mu_\infty^s$ is dense. Hence, every annular component $F$ is foliated by graphs of
continuous functions $y=g(x)$, which are even $C^1$ since they are limits of smooth
functions with uniformly bounded derivatives. Pushing along the flow, 
we obtain a foliation $\Lambda^s$ in
$M$ of codimension one, which is $C^1$ outside the vertical orbits. 
Observe that this foliation is $C^1$ on $T$,
where it defines a one-dimensional foliation. 
This foliation admits closed leaves (the circles $\mu^s_0$) and all other
leaves in $T$ spiral towards these closed leaves. There is no Reeb component.

Reversing the flow direction, we construct a codimension one foliation 
$\Lambda^u$. These two foliations are transverse
to $T$ and $T'$. Moreover, there are transverse one to the other: 
indeed, in $T$, near 
$l^u_0$ \  the foliation
$\Lambda^s$ is very close to $A^{-1}(\mu)$, whereas
$\Lambda^u$ is very close to $\mu$. Iterating by powers of $\varphi$
this works in all of $T$.
Moreover, the stable (respectively unstable) manifolds 
of the vertical orbits are leaves of $\Lambda^s$ (respectively $\Lambda^u$),
and their union is dense in $M$. 
The foliations $\Lambda^s$ and $\Lambda^u$ are the natural candidates
for being the stable and unstable foliations of $\Psi_\lambda$.

Let $q$ be a point in $T$. If $q$ is in $\mu^s_\infty$, 
ie. the stable manifold of a vertical orbit, 
then the leaf of $\Lambda^s$ containing $q$ is obviously in
the stable manifold of $q$: for $t$ big enough, the vectors tangent 
to $\Lambda^s(q)$ at $q$ are divided at least by $2$ by the differential of $\Psi^t_{\lambda}$.

Now assume that $q$ lies in $\Omega^+$, ie. that all iterates $\varphi^n(q)$ are defined. 
At each of these points, there is a tangent cone $C_0(\varphi^n(q)))$, which is exponentially expanded.
But there is also a cone field $C'_0(\varphi^n(q))$, constructed by considering the reversed flow, and
which is exponentially expanded by $\varphi_*^{-1}$, 
therefore exponentially contracted by $\varphi_*$.
Since $\Lambda^{s}$ is $\varphi$-invariant, and  also
$\Lambda^s$ has no tangent vector in $C_0$, 
then tangent vectors at $\Lambda^s(\varphi^n(q))$ must lie 
in $C'_0(\varphi^n(q))$, hence are exponentially contracted.
It follows that $\Lambda^s$ is the stable foliation for $\Psi_\lambda$, 
and similarly, $\Lambda^u$
is the unstable foliation.

\vskip .1in
\noindent
{\bf {Conclusion}} $-$
There are stable and unstable foliations of $\Psi_\lambda$, which is 
a (possibly one-prong) pseudo-Anosov flow.

Observe that the flow is
a $1$-prong pseudo-Anosov flow if and only if $X$ admits vertices of degree $2$.
If there are only $2$-prong orbits before the last glueing, ie. if all vertices of $X$ have valence $4$,
then $\Psi_\lambda$ is an Anosov flow. If there are no $1$-prong
orbits, then $\Psi_\lambda$ is a pseudo-Anosov flow.

This proves Theorem I.

\vskip .1in
\noindent
{\bf {Remark}} $-$ Notice that this produces infinitely many examples of pseudo-Anosov flows
in non orientable graph manifolds. These are obtained by appropriate arrangements
of orientation reversing glueing maps from tori $T$ to $T'$.
\vskip .1in

An interesting subclass of the class of flows constructed here
is the class where the graph $X$ is a circle: all the vertices
have degree two, that is
all the vertical orbits are $1$-prong. 
Observe that condition (II) implies that the surface $\Sigma$ must be an annulus.
The intermediate glueing $N(X)$ is then one of the manifolds $N_k$. The resulting manifold $M(X,A)$ is then
a torus bundle over the circle ($k$ must be even by the discussion above).

Since the only requirement on $A$ is that it does not preserve 
the vertical direction, we obtain in particular:

\begin{corollary}{}{}
In any torus bundle over ${\bf S}^1$ which is not $T^3$ there are
$1$-prong pseudo-Anosov flows with any even number of $1$-prong
orbits.
\end{corollary}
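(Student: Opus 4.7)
The plan is to specialize the construction of Theorem~G to the simplest case in which $X$ is a $k$-cycle: a graph with $k$ edges and $k$ vertices, each of valence $2$, embedded in an annulus $\Sigma$ whose two boundary circles are labeled ``incoming'' and ``outgoing''. Condition (I) is immediate since every vertex has valence $2$, and Condition (II) holds because the two sides of each edge lie on different boundary components of $\Sigma$.

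First I would verify that the intermediate Seifert-fibered manifold $N(X)$ produced by the tangential gluings is precisely the quotient $P_k$ of $\widetilde N = \mathbf{R} \times \mathbf{S}^1 \times I$ by $\tau^k$, where $\tau(x,y,z) = (x+\pi,-y,z)$. For $k$ even, $\tau^k$ acts as a pure translation in the $x$-direction, so $N(X) \cong T^2 \times I$. It contains exactly $k$ one-prong periodic orbits of the local flow $X_\lambda$, and its two boundary tori $T$ (outgoing) and $T'$ (incoming) come equipped with natural coordinates in which the line field $\mu$ of constant-$x$ curves is canonically defined on each.

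Next I would apply the gluing step of Theorem~G: for any linear $A \in GL(2,\mathbf{Z})$ with $A(\mu) \ne \mu$ and $\lambda$ sufficiently large, the construction yields a (one-prong) pseudo-Anosov flow on $M(X,A)$ with exactly $k$ one-prong singular orbits. Since $N(X) \cong T^2 \times I$ and the gluing identifies its two boundary tori via $A$, the resulting manifold $M(X,A)$ is, by construction, the torus bundle over $\mathbf{S}^1$ with monodromy $A$.

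The main obstacle is the surjectivity step: every torus bundle over $\mathbf{S}^1$ other than $T^3$ must be realizable as some such $M(X,A)$. Torus bundles are classified up to homeomorphism by conjugacy classes in $GL(2,\mathbf{Z})$ modulo inversion, and the condition $A(\mu) \ne \mu$ is equivalent to the $(1,2)$-entry of $A$ being nonzero in the coordinates above. I would show that every nontrivial conjugacy class contains such a representative by a short case analysis: hyperbolic classes are generic; parabolic classes of the form $\left(\begin{smallmatrix} 1 & 0 \\ n & 1 \end{smallmatrix}\right)$ are handled by conjugating with the coordinate swap $\left(\begin{smallmatrix} 0 & 1 \\ 1 & 0 \end{smallmatrix}\right)$, yielding $\left(\begin{smallmatrix} 1 & n \\ 0 & 1 \end{smallmatrix}\right)$ with nonzero $(1,2)$-entry; the finite-order nontrivial classes of orders $3$, $4$, and $6$ are represented by the standard rotation matrices, each of which has a nonzero off-diagonal entry. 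The only conjugacy class with no admissible representative is that of the identity, corresponding precisely to $T^3$ and thus explaining the single exclusion in the statement.
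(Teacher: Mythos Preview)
Your approach is exactly the one the paper takes: specialize Theorem~G to the case where $X$ is a $k$-cycle in an annulus, identify $N(X)$ with $P_k \cong T^2 \times I$ for $k$ even, and glue by a linear $A$ with $A(\mu)\neq\mu$ to obtain the torus bundle with monodromy $A$. The paper itself simply asserts the corollary after noting that ``the only requirement on $A$ is that it does not preserve the vertical direction''; it does not carry out the conjugacy-class analysis you supply, so in that respect you are being more careful than the source.

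That extra care, however, exposes a gap in your surjectivity step. Your list of finite-order cases covers orders $3$, $4$, and $6$ but omits order~$2$. The element $-I\in SL(2,\mathbf{Z})$ is central, so every conjugate of $-I$ is again $-I$, whose $(1,2)$-entry is zero; hence $-I$ preserves $\mu$ and the construction does not apply. More conceptually, ``some $GL(2,\mathbf{Z})$-conjugate of $A$ has nonzero $(1,2)$-entry'' is equivalent to ``$A$ has a primitive integer vector that is not an eigenvector,'' and this fails exactly for the scalar matrices $\pm I$, not only for $I$. (The determinant $-1$ involutions are fine: for instance $\mathrm{diag}(1,-1)$ is conjugate via $\left(\begin{smallmatrix}1&1\\0&1\end{smallmatrix}\right)$ to $\left(\begin{smallmatrix}1&-2\\0&-1\end{smallmatrix}\right)$.) So your final sentence---that the identity is the \emph{only} excluded class---is not justified by your argument, and the torus bundle with monodromy $-I$ needs either a separate treatment or an acknowledgment that the stated corollary may require this further exclusion.
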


In particular notice that there are infinitely many one prong
such examples  in nil manifolds. The fundamental groups of 
these manifolds 
have polynomial  growth as opposed to exponential growth, which is obtained
by taking a hyperbolic linear map $A$.

\vskip .15in
\noindent
{\bf {Remark}} $-$
In the construction of periodic Seifert fibered pieces in
this section the following happens: For every vertical orbit  $\delta$
in
the piece and for every quadrant $W$ associated to $\delta$,
then $W$ contains a lozenge $Z$ with a corner in $\delta$.
This is not true for every periodic Seifert fibered  piece with
respect to a pseudo-Anosov flow. It follows that the construction in
this section does not attain all possibles periodic Seifert fibered pieces.
In particular in the construction in this section the neighborhoods of
the periodic pieces  always have boundaries which are transverse
to the flow. This does not occur in general periodic pieces.

\vskip .1in
\noindent
{\bf {Dehn surgery}} $-$ Once the examples in family 2 are constructed,
then one can perform any Dehn surgery on the vertical orbits. As long
as the new meridian is not the original longitude, the resulting
flows will be a (possibly one prong) pseudo-Anosov flow.
In addition each middle step manifold is still Seifert fibered,
so the resulting manifolds are still graph manifolds.
This tremendously expands the class of examples in graph manifolds.

\section{Questions and comments} 

Some of the important questions not directly addressed in this paper are the
following:

\vskip .1in
\noindent
1) Free Seifert fibered pieces.

Let $P$ be one such piece. One fundamental question is the 
following: 
Is there is a representative for $P$ with boundary a union of 
Birkhoff tori so that the flow restricted to $P$ is up 
to finite covers topologically equivalent
 to the geodesic
flow on a hyperbolic surface with boundary?
A geodesic flow on a surface with boundary is the 
{\underline {restriction}}
of the geodesic flow to the unit tangent bundle of a compact
surface with boundary a union of closed geodesics.
Nothing is known in the case of general pseudo-Anosov flows.
In the case of smooth Anosov flows, this has been analysed
by the second author and proved to be true in almost all 
circumstances \cite{Ba3}. 

Along these lines one very important, but vaguely
phrased question is: suppose that $P$ is
a free Seifert piece. Is there no singular orbit in
the middle of $P$? The geodesic flow on the unit tangent
bundle of a surface with boundary has no singular orbits,
so any singular orbit would have to be in the ``boundary" of
this piece. Perhaps the formulation should be that any
singular orbit in the piece has to be homotopic into
the boundary of the piece.

Recall the structure and examples of periodic Seifert pieces: 
certainly they can have singular orbits which are in
some way not removable from the piece.
Notice also that periodic pieces and free pieces can occur
in the same flow: we described examples in the beginning of
section \ref{sec:examples}.


In any case the dynamics in free Seifert fibered pieces should be much
more complex than in periodic Seifert pieces. For example the
Handel-Thurston example flows \cite{Ha-Th} are obtained from geodesic
flows, by cutting along a Birkhoff torus $-$ the set of unit vectors
along a separating geodesic of the surface and glueing with a
shearing. Each piece is a free Seifert piece. Notice that there
are infinitely many closed orbits entirely contained in each piece:
they correspond to all closed geodesics contained in that piece of
the surface. 
In fact there are uncountable many orbits of the flow entirely
contained in this piece.

\vskip .1in
\noindent
2) Periodic Seifert pieces.

In section \ref{sec:examples} we produced many examples with
periodic Seifert pieces where the boundary of each piece
is transverse to the flow. What happens in general?
We obtained partial answers in section \ref{sec:birk}, but the
general picture is not known yet.

\vskip .1in
\noindent
3) The atoroidal case.

The first author has done extensive work 
\cite{Fe3,Fe7} in the {\underline{closed}}
atoroidal case (where in fact by Perelman's work
\cite{Pe1,Pe2,Pe3}, $M$ is hyperbolic).
The questions addressed in that analysis
were more of a geometric nature: 
Exactly when is the flow is quasigeodesic? Can the flow yield geometric
information about the asymptotic or large scale
geometric structure of the universal cover? There is not a general structure
theorem in such manifolds, even in particular manifolds.

As mentioned in the introduction the atoroidal, non closed case
is effectively unknown. Still there are examples in manifolds with
two atoroidal pieces $-$ the Franks-Williams examples are obtained as
follows: start with a suspension Anosov flow in a manifold
$N$ and do a derived from Anosov construction \cite{Fr-Wi}.
This transforms a periodic orbit (which is hyperbolic type) into
(say) a repelling orbit. Remove a neighborhood of this orbit
to produce a manifold $N_1$ with a semiflow which is incoming
along the boundary. Let $N_2$ be a copy of $N_1$ with
a reversed flow. Franks and Williams show examples
of glueings of $N_1$ to $N_2$ which yield Anosov flows in
the resulting manifold $M$.
$N_1$ and $N_2$ are both atoroidal and the torus decomposition
of $M$ is $N_1 \cup N_2$. 
Notice that these flows are not transitive.

Surely the Franks and Williams examples
can be generalized to a certain extent.
One fundamental remaining question is whether there are examples
of pseudo-Anosov or Anosov flows in toroidal manifolds, so that
the flow is transitive and there are non trivial atoroidal
pieces.
Are there also examples with mixed behavior? That is, examples
with atoroidal and Seifert pieces?
Finally: what is the general structure of pseudo-Anosov 
flows restricted to atoroidal pieces?
For example can one always show that the boundary tori are
isotopic to transverse tori? This is not the case for
Seifert pieces.

{\footnotesize
{
\setlength{\baselineskip}{0.01cm}

}
}


\begin{thebibliography}{Be-Me}

{


\bibitem[An]{An} D. V. Anosov, {\em Geodesic flows on closed Riemannian
	manifolds with negative curvature}, Proc. Steklov Inst. Math.
	{\bf 90} (1969).



\bibitem[Ba1]{Ba1} T. Barbot, {\em Caract\'{e}risation des flots d'Anosov en
	dimension $3$ par leurs feuilletages faibles}, Ergod. Th. Dynam. Sys.
	{\bf 15} (1995) 247-270.
\bibitem[Ba2]{Ba2} T. Barbot, {\em Mise en position optimale d'un tore
	par rapport \`{a} un flot d'Anosov}, Comm. Math. Helv.
	{\bf 70} (1995) 113-160.

\bibitem[Ba3]{Ba3} T. Barbot, {\em Flots d'Anosov sur les vari\'{e}t\'{e}s
	graph\'{e}es au sens de Waldhausen}, Ann. Inst. Fourier
	Grenoble {\bf 46} (1996) 1451-1517.

\bibitem[Ba4]{Ba4} T. Barbot, {\em Generalizations of Bonatti-Langevin
	example of Anosov flow and their classification up to topological
	equivalence}, Comm. Anal. Geom. {\bf 6} (1998) 749-798.

\bibitem[Ba5]{Ba5} T. Barbot, {\em Actions de groups sur les 
	$1$-vari\'{e}t\'{e}s non s\'{e}par\'{e}es et feuilletages
	de codimension un}, Ann. Fac. Sci. Toulose Math {\bf 7} 
	(1998) 559-597.

\bibitem[Ba6]{Ba6} T. Barbot, {\em Plane affine geometry of Anosov flows},
	Ann. Sci. Ecole Norm. Sup. {\bf 34} (2001) 871-889.

\bibitem[Ba7]{Ba7} T. Barbot, {\em De l'hyperbolique au globalement
	hyperbolique}, M\'{e}moire d'habilitation, (2007), http://tel.archives-ouvertes.fr/tel-00011278/fr/
	

\bibitem[Bo-La]{Bo-La} C. Bonatti and R. Langevin, {\em Un exemple
	de flot d'Anosov transitif transverse \`a un tore et non 
	conjugu\'e \`a une suspension}, Erg. Th. Dyn. Sys.
	{\bf 14} (1994) 633-643.






\bibitem[Ca-Ju]{Ca-Ju} A. Casson and Doug Jungreis,  {\em Convergence groups and 
Seifert fibered manifold}, Inven. Math. {\bf 118} (1994) 441-456.



\bibitem[Cal1]{Cal1} D. Calegari, {\em The geometry of $\rrrr$-covered
	foliations}, Geometry and Topology {\bf 4} (2000) 457-515.

\bibitem[Cal2]{Cal2} D. Calegari, {\em Foliations with one sided branching},
	Geom. Ded. {\bf 96} (2003) 1-53.

\bibitem[Cal3]{Cal3} D. Calegari, {\em Promoting essential laminations},
	Inven. Math. {\bf 166} (2006) 583-643.

\bibitem[Fe1]{Fe1} S. Fenley, {\em Anosov flows in $3$-manifolds}, 
	Ann. of Math. {\bf 139} (1994) 79-115.

\bibitem[Fe2]{Fe2} S. Fenley, {\em The structure of branching
	in Anosov flows of $3$-manifolds}, 
	Comm. Math. Helv. {\bf 73} (1998) 259-297.

\bibitem[Fe3]{Fe3} S. Fenley, {\em Foliations with good geometry},
	Journal of the A.M.S. {\bf 12} (1999) 619-676.

\bibitem[Fe4]{Fe4} S. Fenley, {\em Foliations and the topology of 
$3$-manifolds I: $\rrrr$-covered foliations and transverse pseudo-Anosov
flows}, Comm. Math. Helv. {\bf 77} (2002) 415-490.

\bibitem[Fe5]{Fe5} S. Fenley, {\em Pseudo-Anosov flows and inconpressible
	tori}, Geom. Ded. {\bf 99} (2003) 61-102.

\bibitem[Fe6]{Fe6} S. Fenley, {\em Laminar free hyperbolic $3$-manifolds},
	Comm. Math. Helv. {\bf 82} (2007) 247-321.

\bibitem[Fe7]{Fe7} S. Fenley, {\em Geometry of foliations and flows I:
	Almost transverse pseudo-Anosov flows and asymptotic behavior
	of foliations}, Jour. Diff. Geom. {\bf 81} (2009) 1-89.

\bibitem[Fe8]{Fe8} S. Fenley, {\em Ideal boundaries of pseudo-Anosov flows
	and uniform convergence groups with connections and applications
	to large scale geometry}, arxiv:math/0507153.

\bibitem[Fe-Mo]{Fe-Mo} S. Fenley and L. Mosher, {\em Quasigeodesic
	flows in hyperbolic $3$-manifolds}, Topology
	{\bf 40} (2001) 503-537.
	
	
\bibitem[Fra]{franks} J. Franks, {\em Anosov diffeomorphisms},
Proc. Symp. Pure Math. {\bf 14} (1970), 61--93.

\bibitem[Fr-Wi]{Fr-Wi} J. Franks and R. Williams, {\em Anomalous
	Anosov flows}, in Global theory of Dyn. Systems,
	Lecture Notes in Math. {\bf 819} Springer (1980).

\bibitem[Fr]{Fr} D. Fried, {\em Transitive Anosov flows and pseudo-Anosov
	maps}, Topology {\bf 22} (1983) 299-303.

\bibitem[Ga]{Ga} D. Gabai, {\em Convergence groups are Fuchsian groups},
	Ann. Math. {\bf 136} (1992) 447-510.




\bibitem[Ga-Ka]{Ga-Ka} D. Gabai and W. Kazez, {\em Group negative curvature 
	for $3$-manifolds with genuine laminations}, Geom. Top.
	{\bf 2} (1998) 65-77.

\bibitem[Ga-Oe]{Ga-Oe} D. Gabai and U. Oertel, {\em Essential
	laminations and $3$-manifolds}, Ann. of Math.
	{\bf 130} (1989) 41-73.

\bibitem[Gh]{Gh} E. Ghys, {\em Flots d'Anosov sur les 
	$3$-vari\'{e}t\'{e}s fibr\'{e}es en cercles}, Ergod. Th
	and Dynam. Sys. {\bf 4} (1984) 67-80.

\bibitem[Go]{Go} S. Goodman, {\em Dehn surgery on Anosov flows},
	Geometric Dynamics, (Rio de Janeiro, 1981), Lec.Notes in Math.
	{\bf 1007}, Springer 1983 300-307.




\bibitem[Ha-Th]{Ha-Th} M. Handel and W. Thurston, {\em Anosov flows 
	on new three manifolds}, Inv. Math. {\bf 59} (1980) 95-103.

\bibitem[He]{He} J. Hempel, {\em 3-manifolds}, Ann. of Math. Studies 86,
	Princeton University Press, 1976.

\bibitem[Ja]{Ja} W. Jaco, {\em Lectures on three-manifold topology},
	C.B.M.S. from A.M.S. {\bf 43} 1980.

\bibitem[Ja-Sh]{Ja-Sh} W. Jaco and P. Shalen, {\em Seifert fibered spaces in $3$-manifolds},
	Memoirs A. M. S. {\bf 220} 1979.

\bibitem[Jo]{Jo} K. Johannson, {\em Homotopy equivalences of $3$-manifolds
	with boundaries}, Lec. Notes Math, Springer {bf 761} (1979).

\bibitem[Ke]{Ke} J. Kelley, {\em General topology}, Springer, 1955.

\bibitem[Mo1]{Mo1} L. Mosher, {\em Dynamical systems and the homology norm
	of a $3$-manifold I. Efficient intersection of surfaces and flows},
	Duke Math. Jour.  {\bf 65} (1992) 449--500.

\bibitem[Mo2]{Mo2} L. Mosher, {\em Dynamical systems and the homology 
	norm of a $3$-manifold II},
	Invent. Math. {\bf 107} (1992) 243--281.


\bibitem[Mo3]{Mo3} L. Mosher, {\em Laminations and flows transverse
	to finite depth foliations}, manuscript available in the web 
	from 
	http://newark.rutgers.edu:80/~mosher/, Part I: Branched surfaces
	and dynamics, Part II in preparation.



\bibitem[Pe1]{Pe1} G. Perelman, {\em The entropy formula for the Ricci flow
and its geometric applications}, eprint, math.DG/0211159.

\bibitem[Pe2]{Pe2} G. Perelman, {\em Ricci flow with surgery on three-manifolds},
eprint, math.DG/0303109.

\bibitem[Pe3]{Pe3} G. Perelman, {\em Finite extinction time for the solutions
to the Ricci flow on certain three-manifolds}, eprint, math.DG/0307245.




\bibitem[Ro-St]{Ro-St} R. Roberts and M. Stein, {\em Group actions on
	order trees}, Topol. Appl. {\bf 115} (2001) 175-201.

\bibitem[RSS]{RSS} R. Roberts, J. Shareshian and M. Stein, {\em Infinitely 
	many hyperbolic manifolds which contain no Reebless foliations},
	Jour. Amer. Math. Soc. {\bf 16} (2003) 639-679.


\bibitem[Sh]{Sh} P. Shalen, {\em Three-manifolds and Baumslag-Solitar
	groups}, ``Geometric topology and geometric group theory,
	(Milwaukee, WI 1997), Top. Appl. {\bf 110} (2001) 113-118.



\bibitem[Th1]{Th1} W. Thurston, {\em The geometry and topology of 3-manifolds},
	Princeton University Lecture Notes, 1982.	

\bibitem[Th2]{Th2}  W. Thurston, {\em On the geometry and dynamics of
	diffeomorphisms of surfaces}, Bull. A.M.S {\bf 19} (1988) 417-431. 

\bibitem[Th3]{Th3} W. Thurston, {\em Hyperbolic structures on $3$-manifolds II,
	Surface groups and $3$-manifolds that fiber over the circle}, preprint.

\bibitem[Th4]{Th4} W. Thurston, {\em Three manifolds, foliations and circles I},
	preprint, 1997.

\bibitem[Th5]{Th5} W. Thurston, {\em Three manifolds, foliations and circles II,
	The transverse asymptotic geometry of foliations}, preprint, 1998.



}
\end{thebibliography}
\end{document}